\documentclass[10pt]{article}
\usepackage{amsfonts}
\usepackage{amsfonts,latexsym}
\usepackage{amsmath}
\usepackage{amsthm}
\usepackage{amssymb}
\usepackage{mathrsfs}
\usepackage{color}
\usepackage[]{cite}
\usepackage{dsfont}
\usepackage{graphicx}
\usepackage[toc]{appendix}
\usepackage{mathtools}
\allowdisplaybreaks[4]
\usepackage{indentfirst}
\usepackage{geometry}
\newtheorem{theorem}{Theorem}[section]

\newtheorem{definition}[theorem]{Definition}
\newtheorem{Assumption}[theorem]{Hypothesis}
\newtheorem{lemma}[theorem]{Lemma}
\newtheorem{remark}[theorem]{Remark}
\newtheorem{corollary}[theorem]{Corollary}

\begin{document}
\title{\bf Long-time dynamics of stochastic 2D hydrodynamic-type evolution equations driven by multiplicative L\'{e}vy noise \footnote{The research is supported by National Natural Science Foundation of China (Grant No. 12371198)}}

%\author{{Jiangwei Zhang$^\text{a}$,  \,\,  Daiwen Huang$^\text{a}\footnote{Corresponding author.}$
\author{{Jiangwei Zhang$^\text{a}$
	}\\
	{ \small\textsl{$^\text{a}$ Institute of Applied Physics and Computational Mathematics, }}\\
	{ \small \textsl{Beijing, 100088,  P.R. China}}
}
\footnotetext{
	\emph{E-mail addresses}: zjwmath@163.com (J. Zhang). %huang\_daiwen@iapcm.ac.cn (D. Huang).
}
\date{}

%%%%%%%%%%%%%%%%%%%%%%%%

\renewcommand{\theequation}{\arabic{section}.\arabic{equation}}
\numberwithin{equation}{section}

\maketitle

%%%%%%%%%%%%%%%%%%%%%
\begin{abstract} This paper investigates the long-time dynamics of solutions for an abstract nonlinear stochastic hydrodynamic-type equation driven by multiplicative L\'{e}vy noise. The framework encompasses several key hydrodynamical models, including the stochastic 2D Navier-Stokes equations, magnetohydrodynamic equations, the magnetic B\'{e}rnard problem, as well as various stochastic shell models of turbulence.
Under the assumption that the nonlinear noise coefficients satisfy local Lipschitz and linear growth conditions, we first establish global well-posedness using a truncation technique.
Then, by introducing a mean random dynamical system, we prove the existence and uniqueness of weak pullback mean random attractors for the system. 
Furthermore, when the external force is time-independent, we study the existence of invariant measures for the corresponding autonomous system, as well as the double limiting behavior of invariant measures with respect to the intensities of Gaussian and L\'{e}vy noise. 
Finally, under additional assumptions on the bilinear nonlinear term (e.g., as in the Navier-Stokes equations), we examine the existence and uniqueness of pullback measure attractors, along with the asymptotically autonomous stability of such attractors as the time parameter tends to negative infinity. It is worth noting that the results of this paper are new even
for the single stochastic 2D Navier-Stokes equations.

	\medskip
	
	\noindent \textbf{Keywords:} Stochastic hydrodynamic-type equation; L\'evy noise; Weak pullback random attractor; Invariant measure; Pullback measure attractor.
\end{abstract}

~ \textbf{AMS subject classifications}:  {Primary 37L55; Secondary 35B40, 35B41, 60H15.}

	\tableofcontents

\section{Introduction}
In this paper, we are interested in the long-term dynamics of the following abstract non-autonomous stochastic evolution driven by multiplicative L\'evy noise:
\begin{align}\label{2DHD-1.1}
	\left\{
\begin{aligned}
		& d u(t)+\mu \mathcal{A} u(t)d t+B(u(t),u(t))d t
		 = f(t)dt+ \varepsilon_1 h(t,u(t))d W(t) +\varepsilon_2\int_{\mathcal{Z}}G(u(t-),z)\widetilde{N}(dt,dz),
		\\
		& u(\tau)=u_{\tau}.
	\end{aligned}
\right.
\end{align}
Above, $\mu>0$ is a constant, the initial value $u_{\tau}$ belongs to the separable Hilbert space $H$, $\mathcal{A}$ is a positive self-adjoint linear operator with dense domain in $H$,  $B$ is a bilinear map defined on dense subset of $H$, $f(t)$ is a time-dependent forcing term, the small parameters $\varepsilon_1, \varepsilon_2\in (0,1]$ denote the intensities of noises. $W$ is a two-sided  cylindrical Wiener process on some Hilbert space $U$,   
and $\widetilde{N}$ is a compensated Poisson random measure, see Section \ref{2DHD-Sec2.1} below for more precise descriptions. Assume that $\widetilde{N}$ and $W$ are independent. For the linear operator $A$, the bilinear map $B$, and the measurable coefficients $h$ and $G$ governing the stochastic perturbations, specific conditions will be stipulated later. 

%In contrast, the dynamical behavior under long-time uniform estimates of solutions has remained largely unexplored.
%In contrast to SPDEs driven by Gaussian noise, it is readily observed that establishing the well-posedness and limiting dynamical behavior of SPDEs with L\'{e}vy noise typically requires the following two assumptions in the literature:
%To establish the well-posedness of SPDEs with L\'{e}vy noise, standard approaches in the literature typically require the following two assumptions:
As an abstract evolution equation incorporating a wide variety of canonical fluid models, equation \eqref{2DHD-1.1} is of great theoretical significance for the study of its well-posedness and limiting dynamical behavior.
 In recent years, extensive progress has been made on stochastic fluid equations driven by Gaussian noise and L\'{e}vy noise (see, e.g., \cite{Li-Ro-2010-JFA,Flandoli-1995-PTRF,Brze2017,Barbu-AMO-2007,Bre-NA-2013,Chueshov-PRSLS-2005,Constantin-PD-2006,Ferrario-NoDEA-1997,Katz-TAMS-2005,Chueshov-AMO-2010,Bessaih-NoDEA-2014} and the references therein).
In particular, for stochastic partial differential equations (SPDEs) driven by L\'{e}vy noise, existing research has largely focused on well-posedness, invariant measures, ergodicity, mixing, and large deviation principles. 
Most of these studies rely on two fundamental assumptions, which are typically required in the literature to establish the well-posedness of SPDEs with L\'{e}vy noise. In contrast to the Gaussian noise case, these assumptions are as follows:
\begin{itemize}
	\item[$\bullet$] For any $k>0$,	$\sup_{\|u\|_{H}\leq k} \int_{\|z\|_{\mathcal{Z}}\leq \varepsilon} \|G(t,u,z)\|_{H}^2\nu(dz)\rightarrow 0, \text{ as } \varepsilon \rightarrow 0$, see \cite{Dong-SCM-2009}; or
	\item[$\bullet$] For some $p>2$, $\int_{\mathcal{Z}}\|G(t,u,z)\|_{H}^p\nu(dz)\leq C(1+\|u\|_{H}^p)$, see \cite{Bre-NA-2013,BreLZ-NA-2014}.
\end{itemize}

A natural question is
 whether the two assumptions mentioned above can be removed in order to prove global well-posedness of SPDEs with L\'{e}vy noise. In an effort to resolve this, Brze\'{z}niak et al. \cite{Bre-JEMS-2023}, by introducing a novel cutting-off argument, proved the existence and uniqueness of
global strong solutions, both probabilistically and in the sense of PDEs, for the 2D Navier-Stokes equations with L\'{e}vy noise, and subsequently established a Freidlin-Wentzell type large deviation principle. In addition, building on the innovative approach developed in \cite{Bre-JEMS-2023}, Peng et al. \cite{Peng-EJP-2022} proved the global well-posedness of problem \eqref{2DHD-1.1} under a natural Lipschitz condition on balls and linear growth assumptions on the jump coefficient (i.e., Hypothesis \ref{2DHD-Ass2.3}). Their proof combines a similar cutting-off argument with new a priori estimates, a slightly modified localization argument, and the fixed point theorem.
Extending the analysis of \cite{Peng-EJP-2022}, Shang et al. established a moderate deviation principle for problem \eqref{2DHD-1.1} with $\varepsilon_{1}=0$ via a weak convergence method, removing the compact embedding assumption on the associated Gelfand triple. Apart from these works, to the best of our knowledge, no further results are available on other dynamical behaviors (such as attractors and invariant measures) of solutions to problem \eqref{2DHD-1.1}.
% (such as weak mean attractors, invariant measures, and pullback measure attractors, among others), which motivates the research presented in this paper

It is known that the theory of pathwise random attractors \cite{Caraballo-2009} is inadequate for SPDEs driven by nonlinear noise. To overcome this limitation, Wang et al. put forward two effective approaches: $(i)$ the construction of weak mean random attractors \cite{WBX2019} for mean random dynamical systems in spaces of Bochner integrable functions, which builds upon a generalization of the results \cite{CKloeden2012}; and $(ii)$ the construction of pullback measure attractors for non-autonomous dynamical systems on the space of measures \cite{LDS-JDE-2024}, by means of an extension of autonomous measure attractors \cite{schmalfuss1991,schmalfuss1999} to the non-autonomous setting. 
In recent years, numerous scholars have investigated the weak mean random attractors and measure attractors of stochastic evolution equations driven by nonlinear noise based on the aforementioned methods, see e.g., \cite{WBX2019a,WBX2019b,ZJW-SD-2023,CYZ-SAA-2022,ZCH-JMAA-2026,LZH-AML,BCS-JDE-2025,LYR-QTDS-2025} and the references therein. However, to date, there have been no relevant studies on the associated dynamical properties of the abstract nonlinear stochastic hydrodynamic-type equation \eqref{2DHD-1.1} driven by nonlinear multiplicative L\'evy noise.

In view of the above, despite the progress in well-posedness and large deviations, and the development of new dynamical theories for nonlinear noise, there have been no relevant studies on the long-term dynamical properties of the abstract nonlinear stochastic hydrodynamic-type equation \eqref{2DHD-1.1} driven by nonlinear multiplicative L\'{e}vy noise. Such properties include weak pullback mean random attractors, invariant measures, and pullback measure attractors. This paper aims to fill this gap by focusing on the limiting dynamical behavior of solutions to the stochastic abstract evolution equation \eqref{2DHD-1.1} driven by multiplicative L\'{e}vy noise.

Nevertheless, the present analysis faces several nontrivial challenges: $(i)$ extending the global Lipschitz conditions to local Lipschitz ones for the well-posedness of \eqref{2DHD-1.1}; $(ii)$ verifying the Feller property of the associated transition semigroup; and $(iii)$ deriving uniform pullback high-regularity estimates for solutions to establish pullback measure attractors.
The aforementioned difficulties are resolved as follows. First, the global well-posedness of \eqref{2DHD-1.1} is proved using a truncation argument that approximates locally Lipschitz functions by globally Lipschitz ones. Second, by virtue of an appropriate stopping time technique and the pathwise Gronwall lemma, the convergence of the solution sequence with respect to the initial data is obtained. The Feller property of the transition semigroup is then established through the convergence in probability of solutions. Finally, by imposing additional conditions on the bilinear operator $B$ and the nonlinear noise coefficients $h$ and $G$, we establish uniform pullback high-regularity estimates in the weighted sense for solutions of \eqref{2DHD-1.1} via a non-standard technique involving the introduction of a suitable weight function.
The principal contributions of the present work can be summarized as follows:

$(1)$ The well-posedness of problem \eqref{2DHD-1.1} is proved under local Lipschitz conditions. This allows us to define a mean dynamical system, for which the existence and uniqueness of a weak $\mathfrak{D}$-pullback mean random attractor is established.

$(2)$ We prove the existence of invariant measures for problem \eqref{2DHD-1.1} in the autonomous case and investigate the limiting behavior of invariant measures with respect to the noise intensity.

$(3)$ Under additional conditions on $B$, $h$ and $G$, the existence and uniqueness of pullback measure attractors for problem \eqref{2DHD-1.1} is established, and the asymptotic autonomy robustness of pullback measure attractors with respect to the time perturbation parameter is further investigated.

The rest of paper is arranged as follows. In the next section, we introduce some necessary notations and the basic assumptions that will be used throughout this paper.
In Section \ref{2DHD-EUS}, we establish the global existence and uniqueness of solutions under local Lipschitz conditions.
In Section \ref{WEEdfikjik}, we prove
the existence and uniqueness of weak pullback mean random attractors.  In Section \ref{2DHD-inv5}, we first prove the existence of invariant measures in the autonomous case, and then investigate the limiting behavior of invariant measures with respect to noise intensity. Finally, Section \ref{2DHD-PMAs6} demonstrates the existence and uniqueness of pullback measure attractors and establishes their asymptotic autonomy robustness.

\section{Preliminaries}
To begin, we establish the analytical framework for this study. Let $H$ be a separable Hilbert space with the inner product $\langle \cdot, \cdot\rangle $ and norm $\|\cdot\|_{H}$, and let $H'$ denote its dual space. Set $V=\rm{dom} (\mathcal{A} ^{1/2})$ equipped with norm  $\|y\|_{V}:=|\mathcal{A} ^{\frac{1}{2}}y|$, $y\in V$, such that $V\subset H$ continuously and densely. Denote  by $V'$ the dual space of $V$.
 Identifying $H$ and $H'$ via the Riesz isomorphism  we have that $V\subset H \cong H' \subset V'$ continuously and densely. In addition, the Poincar\'{e} inequality holds: $\lambda_1\|\cdot\|_{H}^2\leq \|\cdot\|_{V}^2$.
 With a slight abuse of notation, the duality pairing between $V$ and $V'$ is also written as $\langle \cdot, \cdot\rangle $. For simplicity, we set $\mathbb{R}^+=[0,\infty)$ and $\mathbb{R}^{\tau}=[\tau,\infty)$.

\subsection{Stochastic setting and notations}\label{2DHD-Sec2.1}
Let $ (\Omega,\mathscr{F} ,\{\mathscr{F}_t \}_{t\in \mathbb{R}},\mathbb{P}) $  be a filtered probability space, where $\{\mathscr{F}_t \}_{t\in \mathbb{R}}$ is a filtration satisfying the usual conditions (i.e., it is right continuous family of sub-$\sigma$-algebras of $\mathscr{F}$ that contains all $\mathbb{P}$-null sets). Let $\{W(t), t\geq \tau\}$ be a $H$-valued cylindrical Wiener process with the form $W(t,\omega)=\sum_{k\geq1}r_kw_k(t,\omega)$, where $\{r_k\}_{k\geq 1}$ is a complete orthonormal basis of a separable Hilbert space $U$,
$\{w_k\}_{k\geq1}$ is a sequence of independent standard Brownian motions on $(\Omega,\mathscr{F},\{\mathscr{F}_t\}_{t\in \mathbb{R}},\mathbb{P})$.
Consider the collection of all strongly measurable, square-integrable $H$-valued random variables, we denote by $L^2(\Omega; H)$ a Banach space endowed with the norm $\|u(\cdot)\|_{L^2(\Omega;H)}=\sqrt{\mathbb{E}\left[|u(\cdot,\omega)|^2\right]}$, where $\mathbb{E}$ denote the mathematical expectation. Let $\mathcal{L}_2:=\mathcal{L}_2(U;H)$ denote the space of all Hilbert-Schmidt operators from $U$ into $H$. The space $\mathcal{L}_2$ is a separable Hilbert space, equipped with the norm $\|\cdot\|_{\mathcal{L}_2(U;H)}$.

%For simplicity, we denote $\mathcal{B}(\mathcal{Z})$ by $\mathscr{Z}$ throughout the paper.
Let $(\mathcal{Z},\mathscr{Z}=\mathcal{B}(\mathcal{Z}))$ be a measurable space and $\nu$ be a $\sigma$-finite positive measure on it, $\nu(\{0\})=0$,
here $\mathscr{Z}$ denotes the Borel $\sigma$-field of $\mathcal{Z}$.  Let $N: \Omega \times\mathcal{B}(\mathbb{R}^+)\times \mathscr{Z} \rightarrow \mathbb{N}\cup \{\infty\}$ be a time-homogeneous Poisson random measure with intensity measure $\nu$ on $(\mathcal{Z},\mathscr{Z})$ over the filtered probability space $(\Omega,\mathscr{F},\{\mathscr{F}_t\}_{t\in \mathbb{R}},\mathbb{P})$, here $\mathbb{N}:=\{1,2,\cdots\}$. We denote by
$\widetilde{N}(dt,dz)={N}(dt,dz)-\nu(dz)dt$ 
the compensated Poisson random measures associated with $N$.

We recall the following Burkholder-Davis-Gundy (BDG) inequality in the sense of jump noise, see \cite{Applebaum-2009} for more details.
\begin{lemma}\label{BDSL}
	For any $p \geq 1$ and $\tau\in \mathbb{R}$, there exists a positive constant $C_p$ such that for any real-valued square-integrable c\`{a}dl\`{a}g martingale $M$ with $M(0)=0$, the following inequality holds:
	$$
	C_{p}^{-1} \mathbb{E}[M]_{T}^{\frac{p}{2}} \leq \mathbb{E}\left[\sup_{\tau \leq t \leq \tau+T}|M(t)|^{p}\right] \leq C_{p} \mathbb{E}[M]_{T}^{\frac{p}{2}},\quad \forall \, T > 0,
	$$
	where $[M]_{T}$ is called the quadratic variation of $M$.
\end{lemma}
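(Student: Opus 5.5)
This is the classical Burkholder–Davis–Gundy inequality for càdlàg martingales. I would follow the standard route for general semimartingales: work on the shifted time interval, localize, prove $p=2$ exactly, then get $p>2$ from Itô's formula and $1\le p<2$ by a good-$\lambda$ argument.

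\emph{Reduction and the case $p=2$.} Set $\widetilde M(s)=M(\tau+s)$. This is a martingale for the filtration $\mathscr{F}_{\tau+s}$, so it suffices to take $\tau=0$ and control $\sup_{0\le t\le T}|M(t)|$ by $[M]_T$. By localization with the stopping times $\sigma_n=\inf\{t:|M(t)|\ge n\}\wedge T$ and monotone convergence, we may assume $M$ is bounded up to one jump; in fact square-integrability is enough for $p=2$. For $p=2$, the identity $\mathbb{E}|M(T)|^2=\mathbb{E}[M]_T$ holds because $M^2-[M]$ is a martingale. Doob's $L^2$ maximal inequality then gives
\[
\mathbb{E}[M]_T\le \mathbb{E}\sup_{t\le T}|M(t)|^2\le 4\,\mathbb{E}[M]_T .
\]

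\emph{The case $p>2$.} I would apply Itô's formula for càdlàg semimartingales to $|M|^p$:
\[
|M_t|^p=\int_0^t p|M_{s-}|^{p-2}M_{s-}\,dM_s+\tfrac{p(p-1)}2\int_0^t|M_{s-}|^{p-2}d[M]^c_s+\sum_{s\le t}\bigl(\Delta|M_s|^p-p|M_{s-}|^{p-2}M_{s-}\Delta M_s\bigr).
\]
The jump remainder is bounded by $C_p\bigl(|M_{s-}|^{p-2}+|\Delta M_s|^{p-2}\bigr)|\Delta M_s|^2$, and $|\Delta M_s|\le 2\sup|M|$. Taking expectations after localization, this gives
\[
\mathbb{E}|M_T|^p\le C_p\,\mathbb{E}\Bigl[\bigl(\sup_{s\le T}|M_s|\bigr)^{p-2}[M]_T\Bigr].
\]
Combining Doob's $L^p$ inequality with Hölder's inequality with exponents $p/(p-2)$ and $p/2$, and absorbing the supremum term, yields the upper bound. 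For the lower bound, I would apply Itô's formula to $[M]^{p/2}$, or equivalently use $[M]_T=|M_T|^2-2\int M_{s-}dM_s$, and proceed the same way, estimating the stochastic integral with the upper bound already proved.

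\emph{The case $1\le p<2$ (main obstacle).} Here Itô's formula is not directly usable and jumps are unbounded. I would use Lenglart's domination inequality, or equivalently a Davis decomposition. Split $M=M'+M''$, where $M''$ collects jumps with $|\Delta M_s|>2\sup_{r<s}|\Delta M_r|$ and has integrable variation controlled by $\mathbb{E}\sup_s|\Delta M_s|^p\le\mathbb{E}[M]_T^{p/2}$. The part $M'$ then has jumps predictably dominated, so the good-$\lambda$ argument from the $p=2$ identity transfers to power $p$. The delicate step is the control of $M''$, since the compensator of the big jumps must be bounded in $L^p$ by Davis' trick. I would simply cite \cite{Applebaum-2009} (or Protter/Dellacherie–Meyer) for this part, as the paper does.

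Finally, I would remove the localization by Fatou's lemma and monotone convergence as $n\to\infty$, which gives the stated two-sided bound for all $T>0$.
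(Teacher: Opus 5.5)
The paper gives no proof of this lemma; it quotes it from \cite{Applebaum-2009}. Your argument for $p=2$ and for the upper bound with $p>2$ is sound. For $1\le p<2$ you also cite the literature, so you end up where the paper does on what is actually needed. Every application of the lemma in the paper (e.g.\ \eqref{2DHD-equ-Fe5.10}, \eqref{2DHD-equ-Fe5.11}, \eqref{2DHD-eq5.38}, \eqref{2DHD-eq5.39}, \eqref{2DHD-eq6.329}) uses the \emph{upper} bound with $p=1$, i.e.\ Davis' inequality, followed by Young's inequality. So the part you prove yourself is not the part the paper uses, and the part the paper uses is exactly the one you cite.

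There are two points to correct. First, your reduction to $\tau=0$ does not prove the statement as written. $\widetilde M(0)=M(\tau)$ need not vanish, and $[\widetilde M]_T=[M]_{\tau+T}-[M]_\tau$, not $[M]_T$. In fact the literal statement is false for $\tau\neq 0$. Take $M(t)=w_1(t\wedge 5)-w_1(t\wedge 1)$, $\tau=5$, $T=1$. Then $M(0)=0$ and $[M]_1=0$, yet $\mathbb{E}\sup_{5\le t\le 6}|M(t)|^p=\mathbb{E}|w_1(5)-w_1(1)|^p>0$. You should say explicitly that you prove the intended version: $M(\tau)=0$ and quadratic variation taken over $[\tau,\tau+T]$. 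That is how the paper applies it, to stochastic integrals started at the initial time.

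Second, consider the lower bound for $p>2$ via $[M]_T=M_T^2-2\int_0^T M_{s-}\,dM_s$. This forces you to bound $\mathbb{E}\bigl|\int_0^T M_{s-}\,dM_s\bigr|^{p/2}$, i.e.\ to use the upper bound at exponent $p/2$. For $2<p<4$ that exponent lies in $(1,2)$, which you have not proved, so ``the upper bound already proved'' is not available there. Either invoke the cited result at that step, or stay in $L^2$ as follows.

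Set $M^*_T=\sup_{t\le T}|M(t)|$ and $N=\int_0^{\cdot}[M]_{s-}^{(p-2)/4}\,dM_s$.
\begin{itemize}
\item Integration by parts gives $|N_T|\le 2M^*_T\,[M]_T^{(p-2)/4}$, hence $\mathbb{E}[N]_T=\mathbb{E}N_T^2\le 4\,\mathbb{E}\bigl[(M^*_T)^2[M]_T^{(p-2)/2}\bigr]$.
\item The change-of-variables formula for $[M]^{p/2}$ gives $[M]_T^{p/2}\le C_p\bigl([N]_T+(M^*_T)^2[M]_T^{(p-2)/2}+(M^*_T)^{p-2}[M]_T\bigr)$. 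The last two terms come from the jumps, using $|\Delta M_s|\le 2M^*_T$.
\item H\"older's inequality and absorption, after localization, then yield $\mathbb{E}[M]_T^{p/2}\le C_p\,\mathbb{E}(M^*_T)^p$.
\end{itemize}
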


Let $I$ be a bounded subset of $\mathbb{R}^{\tau}$, suppose that $\mathcal{D}(I;H)$ is the space of all c\`{a}dl\`{a}g functions $y: I\rightarrow H$, and it is endowed
with the Skorokhod topology. Then $\mathcal{D}(I;H)$ is metrizable and separable via a complete metric, see \cite[Chapter 2]{Mtivier-1988} for more details.   
Throughout the paper, we denote by $C_{c_1,c_2,\ldots}$ a generic positive constant that may change from line to line but depends only on $c_1,c_2,\cdots$.

\subsection{Hypotheses}
To address the objectives of this paper, this subsection presents the hypotheses regarding the operators $A$, $B$, and the coefficients of the nonlinear stochastic perturbations $h$ and $G$. Let $\mathcal{A}$ be a (possibly unbounded) linear operator with domain $D(\mathcal{A})$, where $D(\mathcal{A})$ is endowed with the graph norm and takes values in $H$. The operator $A$ is subject to the following conditions.

\begin{Assumption}\label{2DHD-Ass2.0}
 Assume that $\mathcal{A}$ is a  positive self-adjoint operator and its domain
 $D(\mathcal{A})$ is densely and compactly embedded in $H$.
\end{Assumption}

Let $B:V\times V\rightarrow V'$ be a bilinear map satisfying the following conditions.
\begin{Assumption}\label{2DHD-Ass2.1}
		Assume that $B:V\times V\rightarrow V'$   is a continuous bilinear mapping such that
	\begin{description}
		\item[(B.1)]{\rm (Skewsymmetricity of $B$)}
		\begin{eqnarray}\label{2DHD-eq2.1}
			\langle B(u,v),w\rangle=-\langle B(u,w),v\rangle, \text{ for all } u,v,w\in V.
		\end{eqnarray}		
		\item[(B.2)] There exist a reflexive and separable Banach space $(Q, \|\cdot\|_Q)$ and a positive constant $\textbf{c}_0$ such that
		\begin{eqnarray}\label{2DHD-eq2.2}
		V\subset Q \subset H
			\quad \text{~~and~~} \quad \|u\|_Q^2\leq \textbf{c}_0\|u\|_{H} \|u\|_{V}, \text{  for all } u \in V.
		\end{eqnarray}
		\item[(B.3)] There exists a constant $C>0$ such that
		\begin{eqnarray}\label{2DHD-eq2.3}
			|\langle B(u,v), w\rangle|\leq C\|u\|_Q \|v\|_{V} \|w\|_Q,
			\text{  for all } u,v,w\in V.
		\end{eqnarray}
	\end{description}
\end{Assumption}

\begin{remark}\label{Remweee2.3}
The hypotheses $\mathbf{(B.1)}$  implies that $\langle B(u,v),v\rangle=0$ for all  $u,v\in V$. Moreover, by $\mathbf{(B.2)}$ and $\mathbf{(B.3)}$ we can deduce
$$
|\langle B(u,v), w\rangle|\leq C_{\textbf{c}_0}\|u\|_{H}^{1/2} \|u\|_{V}^{1/2} \|v\|_{V} \|w\|_{H}^{1/2} \|w\|_{V}^{1/2}, \text{  for all } u,v,w\in V.
$$ 
\end{remark}

\begin{Assumption}\label{2DHD-Ass2.3}
	Assume that $h: \mathbb{R} \times H\rightarrow \mathcal{L}_2({U}; H)$ and $G: \mathbb{R} \times H \times \mathcal{Z}\rightarrow H$ are Borel measurable maps
	and satisfy the global Lipschitz and linear growth conditions. More precisely,
	\begin{description}

			\item[(C.1)]{\rm(Global Lipschitz)} There exists  a positive constant
			 $ L_1$ such that for all $t\in \mathbb{R}$ and $u, v\in V$,
			$$
			\|h(t,u)-h(t,v)\|_{\mathcal{L}_2(U;H)}^2
			+
			\int_\mathcal{Z}\|G(u,z)-G(v,z)\|_{H}^2\nu(dz)\leq L_1\|u-v\|_{H}^2.
			$$
		
		\item[(C.2)]{\rm(Growth)} There exists a positive constant $L_{g}>0$ such that for all $t\in \mathbb{R}$ and $u\in H$,		
		$$
		\|h(t,u)\|^2_{\mathcal{L}_2(U;H)}
		+
		\int_{\mathcal{Z}}\|G(u,z)\|_{H}^2\nu(dz)\leq L_{g}\left(1+\|u\|_{H}^2\right).
		$$
\end{description}
\end{Assumption}

\begin{Assumption}\label{2DHD-Ass2.4}
Assume that $h: \mathbb{R} \times H\rightarrow \mathcal{L}_2({U}; H)$ and 	$G:\mathbb{R}\times H\times \mathcal{Z}\rightarrow H$ are measurable mappings such that
	\begin{description}
				\item[(C.3)]{\rm(Local Lipschitz)} For every $r>0$, there exists  a positive constant
		$L_{r}$ such that, for all $t\in \mathbb{R}$, $u,v\in H$ with $\|u\|_{H}\vee \|v\|_{H}\leq r$,
		$$
		\|h(t,u)-h(t,v)\|_{\mathcal{L}_2(U;H)}^2
		+
		\int_\mathcal{Z}\|G(u,z)-G(v,z)\|_{H}^2\nu(dz)\leq L_{r}\|u-v\|_{H}^2,
		$$
			and $h$ and $G$ satisfy the hypothesis $\mathbf{(C.2)}$, that is the linear growth condition.  
	\end{description}
\end{Assumption}

\begin{remark}
	It should be noted that the linear growth condition $\mathbf{(C.2)}$ can be derived from the global Lipschitz condition $\mathbf{(C.1)}$, provided that for given $t\in \mathbb{R}$, 
	$$
	\|h(t,0)\|^2_{\mathcal{L}_2(U;H)}+\int_\mathcal{Z}\|G(0,z)\|_{H}^2\nu(dz)<\infty.
	$$
	In this case, it suffices to take $L_g=2\max\left\{L_1,\|h(t,0)\|^2_{\mathcal{L}_2(U;H)},\int_\mathcal{Z}\|G(0,z)\|_{H}^2\nu(dz)\right\}$.
\end{remark}

\begin{remark}\label{rem*LtimeB2.7}
	The abstract stochastic evolution equation \eqref{2DHD-1.1}, which satisfies the hypotheses imposed on the operators $\mathcal{A}$ and $B$ (Hypothesis \ref{2DHD-Ass2.1}), encompasses a broad class of stochastic fluid dynamics models, including the stochastic 2D Navier-Stokes equations, 2D stochastic magnetohydrodynamic (MHD) equations, 2D stochastic magnetic B\'{e}rnard problem, stochastic Leray-$\alpha$ model for Navier-Stokes equations, and several stochastic shell models of turbulence. For a detailed verification, the reader is referred to \cite[Section $2$]{Fernando-CMP-2016}, while specific details of the models can be found in  \cite{Bre-NA-2013,Constantin-1988,Chueshov-AMO-2010,Chueshov-PRSLS-2005,Barbu-AMO-2007,Constantin-PD-2006,Ferrario-NoDEA-1997,Katz-TAMS-2005,Chueshov-AMO-2010}.
\end{remark}

\section{Existence and uniqueness of solutions}\label{2DHD-EUS}

We will prove the existence and uniqueness of solutions to \eqref{2DHD-1.1} in the sense of the following definition of a solution.
\begin{definition}\label{2DHD-Def3.1}
	An $H$-valued c\`{a}dl\`{a}g $\mathscr{F}_t$-adapted process $\{u(t),t\in [\tau,\tau+T]\}$ is called a solution of \eqref{2DHD-1.1} if for all $\mathscr{F}_\tau$-measurable $u_{\tau}\in H$ and any $T>0$,
	
	\begin{description}
		\item{$(i)$}
	  $u\in \mathcal{D}([\tau,\tau+T],H)\cap L^2([\tau,\tau+T],V)$, $\mathbb{P}$-a.s.,

		\item{$(ii)$} the following  equality holds for every $t\in [\tau,\tau+T]$, as an element of $V'$, $\mathbb{P}$-a.s., 
		\begin{eqnarray*}
			u(t) &=&u_{\tau} -\mu\int_{\tau}^t \mathcal{A}u(s) ds - \int_{\tau}^t B(u(s),u(s)) ds+\int_{\tau}^t f(s) ds
			\\  &&+\varepsilon_1\int_{\tau}^t h(s,u(s))d W(s)+\varepsilon_2\int_{\tau}^t\int_{\mathcal{Z}}  G(u(s-),z) \widetilde{N}(ds,dz). 
		\end{eqnarray*}
		\end{description}
		An alternative version of Condition $(ii)$ stipulates that for every $t\in [\tau,\tau+T]$, $\mathbb{P}$-a.s.,
		\begin{align*}
			\langle u(t),\eta\rangle =&\langle u_{\tau},\eta\rangle -\mu\int_{\tau}^t\langle\mathcal{A}u(s), \eta\rangle ds - \int_{\tau}^t\langle B(u(s),u(s)),\eta\rangle ds+\int_{\tau}^t \langle f(s),\eta\rangle ds
			\\  
			&+\varepsilon_1\int_{\tau}^t\langle h(s,u(s))d W(s),
			\eta \rangle+\varepsilon_2\int_{\tau}^t\int_{\mathcal{Z}} \langle G(u(s-),z), \eta \rangle\widetilde{N}(ds,dz),  
		\end{align*}
		for any $\eta\in V$.
\end{definition}

We state the global existence of solutions under global Lipschitz conditions on the nonlinear coefficients of the noises, a result established in \cite{Peng-EJP-2022}.
\begin{theorem}\label{2DHD-the3.2}
	Let {Hypotheses} $\ref{2DHD-Ass2.0}$, $\ref{2DHD-Ass2.1}$ and $\ref{2DHD-Ass2.3}$ hold. 
	Then, for any $\mathscr{F}_{\tau}$-measurable $H$-valued initial data $u_{\tau}$ satisfying $\mathbb{ E} [\|u_{\tau}\|_{H}^2]<\infty$ and $f\in L^2([\tau,\tau+T],V')$, there exists a $\varepsilon_0>0$ such that for any $\varepsilon_1, \varepsilon_2\in (0,\varepsilon_0]$,
	the problem \eqref{2DHD-1.1} has a unique solution  $\{u(t),t\in [\tau,\tau+T]\}$ in the sense of Definition \ref{2DHD-Def3.1}. Moreover, the solution $u$ fulfills the following estimate, for any $T>0$,
	\begin{eqnarray*}
		\sup_{t\in [\tau, \tau+T]}\mathbb{ E}[\|u(t)\|_{H}^2]+\mathbb{ E}\left[\int_{\tau}^{\tau+T} \|u(t)\|_{V}^2d t\right]< \infty.
	\end{eqnarray*}
\end{theorem}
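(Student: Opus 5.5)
The plan is to follow the cutting-off strategy of \cite{Bre-JEMS-2023,Peng-EJP-2022}. We first build global solutions of a truncated equation and then remove the truncation through a localization argument.

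\emph{Truncated problem.} Fix $N>0$ and a smooth cutoff $\theta_N:\mathbb{R}^+\to[0,1]$ with $\theta_N(r)=1$ for $r\le N$ and $\theta_N(r)=0$ for $r\ge N+1$. We replace $B(u,u)$ by $\theta_N(\Phi_u(t))B(u,u)$, where
\[
\Phi_u(t)=\int_\tau^t\|u(s)\|_Q^4\,ds \quad\text{or}\quad \Phi_u(t)=\sup_{s\le t}\|u(s)\|_H^2+\int_\tau^t\|u(s)\|_V^2\,ds .
\]
By \textbf{(B.2)}, \textbf{(B.3)} and Remark \ref{Remweee2.3}, the truncated drift is globally Lipschitz from a path space
\[
\mathcal{X}_T=\mathcal{D}([\tau,\tau+T],H)\cap L^2([\tau,\tau+T],V)
\]
into $L^2(V')$ on short time intervals. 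We then set up a fixed-point map on $L^2(\Omega;\mathcal{X}_T)$ with adapted processes. It sends $v$ to the solution of the linear equation
\[
du+\mu\mathcal{A}u\,dt=\big[f-\theta_N B(v,v)\big]dt+\varepsilon_1h(t,v)\,dW+\varepsilon_2\int_{\mathcal Z}G(v(t-),z)\widetilde N(dt,dz).
\]
The linear equation is solved by the Galerkin or semigroup theory, using the compactness in Hypothesis \ref{2DHD-Ass2.0}. To show that this map is a contraction, we apply It\^o's formula to $\|u_1-u_2\|_H^2$ and treat the terms as follows.
\begin{itemize}
\item The noise terms are estimated by \textbf{(C.1)}.
\item The martingale suprema are controlled by Lemma \ref{BDSL}.
\item The truncated bilinear difference is bounded by $C_N\|v_1-v_2\|_H\|v_1-v_2\|_V$ after Young's inequality.
\end{itemize}
This gives a contraction on a small interval $[\tau,\tau+T_0]$. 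Because $T_0$ depends only on $N$, $L_1$ and $\mu$, and not on the initial data, we can iterate and cover $[\tau,\tau+T]$.

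\emph{Uniform energy estimate.} Apply It\^o's formula to $\|u_N\|_H^2$. The nonlinear term vanishes by \textbf{(B.1)}. Using \textbf{(C.2)} and Young's inequality $2\langle f,u\rangle\le \frac{\mu}{2}\|u\|_V^2+\frac{2}{\mu}\|f\|_{V'}^2$, we then apply BDG (Lemma \ref{BDSL}) and Gronwall's lemma. This yields
\[
\mathbb{E}\sup_{t}\|u_N(t)\|_H^2+\mathbb{E}\int_\tau^{\tau+T}\|u_N\|_V^2\,dt\le C\Big(1+\mathbb{E}\|u_\tau\|_H^2+\int_\tau^{\tau+T}\|f\|_{V'}^2\,dt\Big),
\]
with a constant independent of $N$. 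The restriction $\varepsilon_1,\varepsilon_2\le\varepsilon_0$ enters exactly here: the BDG term produces a factor like $C_p\varepsilon^2L_g\,\mathbb{E}\sup\|u\|_H^2$, which must be absorbed into the left-hand side. We therefore choose $\varepsilon_0$ so that this coefficient is less than $1/2$. The estimate is the bound claimed in the theorem, once the truncation is removed.

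\emph{Removing the truncation.} Define the stopping time $\tau_N=\inf\{t:\Phi_{u_N}(t)\ge N\}$. By pathwise uniqueness of the truncated problem (same Itô argument), $u_N=u_{M}$ on $[\tau,\tau_N\wedge\tau_M]$ for $M\ge N$, so the solutions are consistent. Chebyshev's inequality with the uniform estimate gives
\[
\mathbb{P}(\tau_N<\tau+T)\le C/N\to0,
\]
which makes $u:=\lim u_N$ well defined on $[\tau,\tau+T]$ almost surely. It satisfies Definition \ref{2DHD-Def3.1}, and Fatou's lemma transfers the estimate to $u$. Uniqueness follows by applying It\^o's formula to $e^{-\int_\tau^t C\|u_1\|_Q^4ds}\|u_1-u_2\|_H^2$. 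The weight absorbs the term
\[
|\langle B(w,u_1),w\rangle|\le C\|w\|_Q^2\|u_1\|_V\le \tfrac{\mu}{2}\|w\|_V^2+C\|w\|_H^2\|u_1\|_V^2
\]
(the weight is chosen accordingly), and a stopping-time argument then gives $w=0$. The main obstacle is the $N$-independent estimate with jump noise, since there is no higher-moment assumption on $G$. We handle it by relying only on second moments and the quadratic variation $\int\!\int\|G\|^2N(ds,dz)$, whose expectation equals the compensator, together with the smallness of $\varepsilon_0$.
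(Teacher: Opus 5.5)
The paper does not prove this statement. It quotes it from \cite{Peng-EJP-2022}, whose proof it summarizes as a cut-off argument with new a priori estimates, a modified localization, and a fixed point theorem. Your outline has that architecture. Your energy estimate, the passage $N\to\infty$, and the weighted uniqueness argument are sound; for uniqueness, take the weight $e^{-C\int_\tau^t\|u_1\|_V^2ds}$ so that it matches the bound you display.

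\textbf{The gap: the contraction for the truncated problem.} You claim the truncated bilinear difference is bounded by $C_N\|v_1-v_2\|_H\|v_1-v_2\|_V$, which after integration would give a factor $\sqrt{T_0}$. No such bound holds. The difference $B(v_1,v_1)-B(v_2,v_2)=B(v_1-v_2,v_1)+B(v_2,v_1-v_2)$ always carries a factor $\|v_i(s)\|_Q$ or $\|v_i(s)\|_V$. Neither choice of $\Phi_v$ controls these pointwise in time; each controls them only through time integrals. What \textbf{(B.2)}--\textbf{(B.3)} actually give, with $w=u_1-u_2$, is for instance
\begin{align*}
2\int_\tau^{\tau+T_0}\theta_N(\Phi_{v_1}(s))\,\big|\langle B(v_1-v_2,v_1),w\rangle\big|\,ds
&\le \frac{\mu}{2}\int_\tau^{\tau+T_0}\|w\|_V^2\,ds\\
&\quad+\frac{C\mathbf{c}_0}{\mu}(N+1)^{1/2}\sup_{s}\|v_1(s)-v_2(s)\|_H\Big(\int_\tau^{\tau+T_0}\|v_1-v_2\|_V^2\,ds\Big)^{1/2}.
\end{align*}
No power of $T_0$ can be extracted here, because the cut-off allows $\int_\tau^{\tau+T_0}\|v_1\|_Q^4ds$ to be as large as $N+1$ however small $T_0$ is. The term from $\theta_N(\Phi_{v_1})-\theta_N(\Phi_{v_2})$ is even worse in $N$. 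Only the noise terms gain a factor $T_0$ via \textbf{(C.1)}. Since $N$ is exactly the parameter you must send to infinity, the fixed point does not close as described.

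\textbf{A repair: cut off at a small level instead of a large one.} Work on $[\sigma,\sigma+T_0]$, with $\sigma=\tau$ initially. Take $\theta_\delta=1$ on $[0,\delta]$, $\theta_\delta=0$ on $[2\delta,\infty)$, $|\theta_\delta'|\le 2/\delta$, and $\Phi_v(t)=\int_\sigma^t\|v\|_Q^4ds$. Split at the exit times of $\Phi_{v_1},\Phi_{v_2}$ from $[0,2\delta]$ as usual. Each piece of $\|\theta_\delta(\Phi_{v_1})B(v_1,v_1)-\theta_\delta(\Phi_{v_2})B(v_2,v_2)\|^2_{L^2(V')}$ is then bounded, on any interval, by
$$C\delta^{1/2}\Big(\int\|v_1-v_2\|_Q^4ds\Big)^{1/2}\le C\mathbf{c}_0\delta^{1/2}\Big(\sup_s\|v_1(s)-v_2(s)\|_H^2+\int\|v_1-v_2\|_V^2ds\Big).$$
Choosing $\delta$ small in terms of $\mu,\mathbf{c}_0$, and then $T_0$ small in terms of $L_1$, gives a genuine contraction.

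The price is that the truncated solution solves \eqref{2DHD-1.1} only up to $\sigma_1=\inf\{t\ge\sigma:\int_\sigma^t\|u\|_Q^4ds\ge\delta\}\wedge(\sigma+T_0)$. So instead of letting $N\to\infty$, you must restart at the successive stopping times $\sigma_k$ and show they eventually pass $\tau+T$:
\begin{itemize}
\item On $[\tau,\lim_k\sigma_k)$ the glued process solves the untruncated equation.
\item Your energy estimate, stopped at $\sigma_k$, gives $\int\|u\|_Q^4ds\le\mathbf{c}_0^2\sup\|u\|_H^2\int\|u\|_V^2ds<\infty$ a.s.\ on that interval.
\item Hence only finitely many steps can exhaust $\delta$, and every other step lasts $T_0$.
\end{itemize}

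\textbf{Two smaller points.} First, smallness of $\varepsilon_0$ is not needed for the finite-horizon estimate: after BDG, Gronwall absorbs $C\varepsilon^2L_g\int_\tau^t\mathbb{E}\sup_{r\le s}\|u(r)\|_H^2ds$. Second, having only second moments of $G$ is not the obstacle you describe. BDG with exponent one, together with $\mathbb{E}\int\!\int\|G\|_H^2N(ds,dz)=\mathbb{E}\int\!\int\|G\|_H^2\nu(dz)ds$, closes that estimate directly. The real difficulty is the existence step above.
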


We now present the global existence theorem for solutions under the relaxed {Hypotheses} \ref{2DHD-Ass2.3}, in the sense of Definition \ref{2DHD-Def3.1}. In other words, we formulate the main result within this relaxed framework.
\begin{theorem}\label{2DHD-the3.3}
	Let {Hypotheses} $\ref{2DHD-Ass2.0}$, $\ref{2DHD-Ass2.1}$ and $\ref{2DHD-Ass2.4}$ hold.
	Then, for any $\mathscr{F}_{\tau}$-measurable $H$-valued initial data $u_{\tau}$ satisfying $\mathbb{ E} [\|u_{\tau}\|_{H}^2]<\infty$ and $f\in L^2([\tau,\tau+T],V')$, there exists a $\varepsilon_0>0$ such that for any $\varepsilon_1, \varepsilon_2\in (0,\varepsilon_0]\subseteq (0,1]$,
	there is a unique global  solution  $\{u(t),t\in [\tau,\tau+T]\}$ to problem \eqref{2DHD-1.1}. Moreover, the solution $u$ fulfills the following estimate, for any $T>0$,
	\begin{eqnarray}\label{2DHD-eq3.0}
		\mathbb{ E}\left[\sup_{t\in [\tau, \tau+T]}\|u(t)\|_{H}^2\right]+\mathbb{ E}\left[\int_{\tau}^{\tau+T} \|u(t)\|_{V}^2d t\right]\leq C_T\left(1+\mathbb{ E} [\|u_{\tau}\|_{H}^2]+\int_{\tau}^{\tau+T}\|f(s)\|^2_{V'}ds\right).
	\end{eqnarray}
\end{theorem}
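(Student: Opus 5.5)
The plan is a truncation argument: approximate the locally Lipschitz coefficients by globally Lipschitz ones, apply Theorem \ref{2DHD-the3.2} to each approximate problem, and glue the resulting solutions together with stopping times. For $R\in\mathbb{N}$ I introduce the radial retraction $\pi_R:H\to H$, $\pi_R(u)=u$ if $\|u\|_H\le R$ and $\pi_R(u)=Ru/\|u\|_H$ otherwise. This map is $1$-Lipschitz, and I set
$$
h_R(t,u):=h(t,\pi_R u),\qquad G_R(u,z):=G(\pi_R u,z).
$$
By $\mathbf{(C.3)}$ with $r=R$, the pair $(h_R,G_R)$ satisfies $\mathbf{(C.1)}$ with constant $L_R$. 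By $\mathbf{(C.2)}$ and $\|\pi_R u\|_H\le\|u\|_H$, it satisfies $\mathbf{(C.2)}$ with the same constant $L_g$, independent of $R$. The $\varepsilon_0$ in Theorem \ref{2DHD-the3.2} is governed by the growth constant and not by the Lipschitz constant; I will check this by inspecting where the smallness enters the a priori estimate. If that holds, a single $\varepsilon_0$ works for all $R$, and Theorem \ref{2DHD-the3.2} gives a unique solution $u_R$ of the truncated problem on $[\tau,\tau+T]$.

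Next I derive the uniform bound \eqref{2DHD-eq3.0} for $u_R$ with $C_T$ independent of $R$. I apply It\^o's formula to $\|u_R\|_H^2$ and use $\langle B(u,u),u\rangle=0$ from Remark \ref{Remweee2.3}. The forcing term is handled by
$$
2\langle f,u\rangle\le \mu\|u\|_V^2+\mu^{-1}\|f\|_{V'}^2 .
$$
The It\^o correction terms are bounded by $(\varepsilon_1^2+\varepsilon_2^2)L_g(1+\|u\|_H^2)$. For the supremum I use the BDG inequality, Lemma \ref{BDSL}. The Wiener martingale is bounded by $\tfrac14\mathbb{E}\sup\|u_R\|_H^2$ plus $C\int\mathbb{E}\|h\|_{\mathcal{L}_2}^2$. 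The jump martingale is bounded through $\mathbb{E}\big(\int\int\|u_R(s-)\|_H^2\|G\|_H^2N(ds,dz)\big)^{1/2}$, using $\sup\|u\|_H\cdot(\cdots)^{1/2}$ and Young's inequality, together with the identity $\mathbb{E}\int\int\|G\|_H^2N=\mathbb{E}\int\int\|G\|_H^2\nu\,ds$. To make the supremum finite before absorbing it, I first localize with $\sigma_M=\inf\{t:\|u_R(t)\|_H>M\}$. Gronwall's lemma then gives \eqref{2DHD-eq3.0} for $u_R$ uniformly in $R$, and letting $M\to\infty$ removes the localization.

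Then I glue. Define $\tau_R:=\inf\{t\ge\tau:\|u_R(t)\|_H>R\}\wedge(\tau+T)$. Uniqueness for the globally Lipschitz problem is the key input here. On $[\tau,\tau_R\wedge\tau_{R+1})$ both $u_R$ and $u_{R+1}$ solve the same equation, since the truncation is inactive for both. I compare them by It\^o's formula for $\|u_R-u_{R+1}\|_H^2$ with weight $\exp(-C\int_\tau^t\|u_R\|_V^2\,ds)$, which handles the term $\langle B(w,u_R),w\rangle\le \tfrac{\mu}{2}\|w\|_V^2+C\|u_R\|_V^2\|w\|_H^2$ coming from Remark \ref{Remweee2.3}, together with $L_{R+1}$ for the noise. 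This gives $u_R=u_{R+1}$ on $[\tau,\tau_R]$ a.s., and hence $\tau_R\le\tau_{R+1}$.

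From the uniform estimate,
$$
\mathbb{P}(\tau_R<\tau+T)\le \mathbb{P}\Big(\sup_t\|u_R(t)\|_H\ge R\Big)\le C_T(\cdots)/R^2\to0 .
$$
So $\tau_R\uparrow\tau+T$ a.s., and $u:=u_R$ on $[\tau,\tau_R]$ is a well-defined c\`adl\`ag adapted process. It solves \eqref{2DHD-1.1} up to each $\tau_R$, and hence on $[\tau,\tau+T]$. Estimate \eqref{2DHD-eq3.0} follows from Fatou's lemma, or by monotone convergence applied to $\sup_{t\le\tau_R}$. Uniqueness follows by the same weighted stopped It\^o argument, applied to two solutions stopped at the time where either exceeds $r$, using $L_r$.

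One subtlety with the jumps: at $\tau_R$ the jump may carry $\|u\|_H$ past $R$. However, the equation at time $t$ uses $u(s-)$ with $\|u(s-)\|_H\le R$ for $s\le\tau_R$, so the coefficients agree up to and including $\tau_R$ and the gluing remains consistent.

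The main obstacle is the uniformity in $R$ of both $\varepsilon_0$ and $C_T$. If the proof of Theorem \ref{2DHD-the3.2} in \cite{Peng-EJP-2022} ties $\varepsilon_0$ to $L_1$, I would instead rederive the a priori bound directly as above. Only $L_g$ appears there, and the $R$-dependent $L_R$ then enters only the pathwise uniqueness comparison, where no smallness is needed.
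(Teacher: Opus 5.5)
Your proposal is correct and follows essentially the same route as the paper. Both use the same radial truncation of $h$ and $G$, apply Theorem~\ref{2DHD-the3.2} to the truncated problems, check that the truncated solutions agree up to the exit time from the ball, and use a Chebyshev bound to show the exit times exhaust $[\tau,\tau+T]$.

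The differences are only in detail. You bound $\mathbb{P}(\tau_R<\tau+T)$ through a BDG supremum estimate for $u_R$ that is uniform in $R$, whereas the paper uses the plain second-moment bound on the stopped glued process together with $\|u(\zeta_k)\|_H\ge k$ on $\{\zeta_k\le t\}$. You also carry out the weighted It\^o comparison for consistency and uniqueness, which the paper only asserts. Finally, you flag that $\varepsilon_0$ must not depend on $R$. Your fallback covers only the a priori bound, not existence for the truncated problems, but the paper does not address this point at all.
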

\begin{proof}
The proof relies on a standard truncation argument (see, e.g., \cite[Theorem 3.4]{Mao-2008}), which necessitates approximating the locally Lipschitz continuous nonlinear terms $h$ and $G$ by globally Lipschitz continuous ones. For every $k \in \mathbb{N}$, $y\in H$ and $z\in \mathcal{Z}$, we define the mapping $h_k$ and $G_k$ by
\begin{align*}
	h_k(y)=h\left(\frac{\|y\|_{H}\wedge k}{\|y\|_{H}}y\right) \text{~~and~~}
	G_k(y,z)=G\left(\frac{\|y\|_{H}\wedge k}{\|y\|_{H}}y,z\right),
\end{align*}
where we set $\frac{\|y\|_{H}\wedge k}{\|y\|_{H}}y=0$	when $y=0$. Then, by Hypotheses \ref{2DHD-Ass2.4}, for any $y\in H$, it follows that $h_k(y)$ and $G_k(y,z)$ for $z \in \mathcal{Z}$ satisfy both a global Lipschitz and linear growth conditions (i.e., Hypotheses \ref{2DHD-Ass2.3}). More precisely, for every $k\in \mathbb{N}$, $T>0$, there exists a positive number $\mathcal{Q}(k)$ depending only on $k$ such that for all $x, y\in H$ and $t\in [\tau,\tau+T]$, it holds
	\begin{align}\label{2DHD-eq3.1}
		\|h_k(t,x)-h_k(t,y)\|_{\mathcal{L}_2(U;H)}^2
		+
		\int_\mathcal{Z}\|G_k(x,z)-G_k(y,z)\|_{H}^2\nu(dz)\leq \mathcal{Q}(k) \|x-y\|_{H}^2.
	\end{align}
Therefore, by Theorem \ref{2DHD-the3.3}, for any $\tau\in \mathbb{R}$ and $T>0$, there exists a unique solution $u^k(t)$ in
	$\mathcal{D}([\tau,\tau+T],H)\cap L^2([\tau,\tau+T],V)$. Furthermore,  this solution satisfies the following  equality, for every $t\in [\tau,\tau+T]$,  $\mathbb{P}$-a.s.,
	\begin{align*}
		u^k(t) &=u_{\tau} -\mu\int_{\tau}^t \mathcal{A}u^k(s) ds - \int_{\tau}^t B(u^k(s),u^k(s)) ds+\int_{\tau}^t f(s) ds
		\\  &+\varepsilon_1\int_{\tau}^t h(s,u^k(s))d W(s)+\varepsilon_2\int_{\tau}^t\int_{\mathcal{Z}}  G(u^k(s-),z) \widetilde{N}(ds,dz), ~~\text{in~}  V'.
	\end{align*}
	
Define a stopping time
$$
\zeta_k:=\inf \left\{t\geq \tau: \|u^k(t)\|_{H}>k\right\},
$$
where, by convention, $\zeta_{k}=+\infty$ if $\left\{t\geq \tau: \|u^k(t)\|_{H}>k\right\}=\emptyset$. Obviously, for any $t\in [\tau,\zeta_k)$, $\|u^k(t)\|_{H}\leq k$. Then, by the definitions of $h_k$ and $G_k$, we find that for any $k\in \mathbb{N}$, $z\in \mathcal{Z}$ and $t\in [\tau,\zeta_k)$, it holds
\begin{align}\label{2DHD-eq3.2}
	h^{k+1}(t,u^k(t))=h^{k}(t,u^k(t)) \text{~~and~~} G^{k+1}(u^k(t),z)=G^{k}(u^k(t),z).
\end{align}
In fact, we can verify that for any $k\in \mathbb{N}$ and $t\in [\tau,\zeta_k)$, $\varphi^{k+1}(t\wedge\zeta_{k})=\varphi^{k}(t\wedge\zeta_{k})$ and
$$ 
\zeta_{k+1}\geq\zeta_{k},\,\,a.s.,
$$
which implies that for all $t\in [\tau,\zeta_k)$, we have $\varphi^{k+1}(t)=\varphi^{k}(t)$, and the sequence $\zeta_{k}$ is increasing in $k$. 
Consequently,  we can define $\zeta$ as
$\zeta=\lim\limits_{k\rightarrow\infty}\zeta_{k}=\sup\limits_{k\in \mathbb{N}}\zeta_{k}$. This allows us to define $u(t)$ on $t\in [\tau,\zeta)$ as follows:
$$
u(t):=u^k(t), ~~\forall t\in [\tau,\zeta_{k}).
$$
Using \eqref{2DHD-eq3.1} and \eqref{2DHD-eq3.2}, we know that $u(t)$ is unique local solution of problem \eqref{2DHD-1.1} for $t\in [\tau,\zeta)$.
It remains to prove that
\begin{align}\label{2DHD-eq3.3}
	\mathbb{P}(\zeta=\infty)=1.
\end{align}

%Applying It\^{o}'s formula with jump (see Appendix \ref{It-Ap}) to $\|u(t\wedge \zeta_{k})\|_H^2$, for all $t\geq \tau$, we have that $\mathbb{P}$-a.s., 
Applying It\^{o}'s formula with jump to $\|u(t\wedge \zeta_{k})\|_H^2$, for all $t\geq \tau$, we have that $\mathbb{P}$-a.s., 
\begin{align}\label{2DHD-eq3.4}
	\begin{split}
		&~~\|u(t\wedge \zeta_{k})\|_H^2+2\mu \int_{\tau}^{t\wedge \zeta_{k}}\|u(s)\|_{V}^2ds
		= \|u_{\tau}\|_H^2+ 2\int_{\tau}^{t\wedge \zeta_{k}}\langle f(s), u(s)\rangle ds\\
		&+2\varepsilon_1 \int_{\tau}^{t\wedge \zeta_{k}}\langle h(s,u(s)),u(s)\rangle dW(s)
		+\varepsilon_1^2 \int_{\tau}^{t\wedge \zeta_{k}} \|h(s,u(s))\|^2_{\mathcal{L}_2(U;H)} ds\\
		&+2\varepsilon_2\int_{\tau}^{t\wedge \zeta_{k}} \int_{\mathcal{Z}}\langle G(u(s-),z),u(s-)\rangle\widetilde{N}(ds,dz)
		+\varepsilon_2^2\int_{\tau}^{t\wedge \zeta_{k}} \int_{\mathcal{Z}} \|G(u(s-),z)\|^2_{H}N(ds,dz).
	\end{split}
\end{align}
By H\"{o}lder's inequality and Young's inequality we have
\begin{align}\label{2DHD-eq3.5}
	\begin{split}
		2\mathbb{ E}\left[\int_{\tau}^{t\wedge \zeta_{k}}\langle f(s), u(s)\rangle ds\right]
		&\leq \frac{\mu}{2}\mathbb{E}\left[\int_{\tau}^{t\wedge \zeta_{k}} \|u(s)\|_{V}^2ds\right]+\frac{4}{\mu}\int_{\tau}^{t\wedge \zeta_{k}}\|f(s)\|_{V'}^2ds\\
		&\leq \frac{\mu}{2}\mathbb{E}\left[\int_{\tau}^{t\wedge \zeta_{k}} \|u(s)\|_{V}^2ds\right]+\frac{4}{\mu}\int_{\tau}^{t}\|f(s)\|_{V'}^2ds.
	\end{split}
\end{align}
According to the linear growth condition in Hypotheses \ref{2DHD-Ass2.4}, it can be shown that
\begin{align}\label{2DHD-eq3.6}
	\begin{split}
		&~~\varepsilon_1^2 \mathbb{E}\left[\int_{\tau}^{t\wedge \zeta_{k}} \|h(s,u(s))\|^2_{\mathcal{L}_2(U;H)} ds\right] +\varepsilon_2^2\mathbb{E}\left[\int_{\tau}^{t\wedge \zeta_{k}} \int_{\mathcal{Z}} \|G(u(s-),z)\|^2_{H}N(ds,dz)\right]\\
		&\leq \max\{\varepsilon_1^2,\varepsilon_2^2\} \mathbb{E}\left[\int_{\tau}^{t\wedge \zeta_{k}} \left(\|h(s,u(s))\|^2_{\mathcal{L}_2(U;H)}+\int_{\mathcal{Z}} \|G(u(s-),z)\|^2_{H}\nu(dz)\right) ds\right] \\
		&\leq C_{L_g}\mathbb{E}\left[\int_{\tau}^{t\wedge \zeta_{k}} (1+\|u(s)\|_H^2)ds\right]=C_{L_g}\mathbb{E}\left[\int_{\tau}^{t} (1+\|u(s\wedge \zeta_{k})\|_H^2)ds\right].
	\end{split}
\end{align}
Since the process $\int_{\tau}^{t\wedge \zeta_{k}} \int_{\mathcal{Z}}\langle G(u(s-),z),u(s-)\rangle\widetilde{N}(ds,dz)$ is a martingale for $t\geq \tau$, then it yields
$$
2\varepsilon_2\mathbb{ E}\left[\int_{\tau}^{t\wedge \zeta_{k}} \int_{\mathcal{Z}}\langle G(u(s-),z),u(s-)\rangle\widetilde{N}(ds,dz)\right]=0,
$$
which together with \eqref{2DHD-eq3.4}-\eqref{2DHD-eq3.6} can infer
\begin{align*}
	&~~\mathbb{E}\left[\|u(t\wedge \zeta_{k})\|_H^2\right]+\mu \mathbb{E}\left[\int_{\tau}^{t\wedge \zeta_{k}}\|u(s)\|_{V}^2ds\right]\\
	&\leq  \mathbb{E}\left[\|u_{\tau}\|_H^2\right]+\frac{4}{\mu}\int_{\tau}^{t}\|f(s)\|_{V'}^2ds+C_{L_g}\mathbb{E}\left[\int_{\tau}^{t} (1+\|u(s\wedge \zeta_{k})\|_H^2)ds\right].
\end{align*}
By using Gronwall's inequality, we obtain
 \begin{align}\label{2DHD-eq3.8}
 	\mathbb{E}\left[\|u(t\wedge \zeta_{k})\|_H^2\right]\leq  C_{L_g,\mu}\left(\mathbb{E}\left[\|u_{\tau}\|_H^2\right]+\int_{\tau}^{t}\|f(s)\|_{V'}^2ds+t\right)e^{C_{L_g,\mu}t}.
 \end{align}

It follows that 
\begin{align*}
	\mathbb{P}\left(\zeta_{k}\leq t\right)\leq \frac{\mathbb{E}\left[\|u(t\wedge\zeta_{k})\|_{H}^2\mathds{I}_{\zeta_{k}\leq t}\right]}{k^2}\leq \frac{C_{L_g,\mu}\left(\mathbb{E}\left[\|u_{\tau}\|_H^2\right]+\int_{\tau}^{t}\|f(s)\|_{V'}^2ds+t\right)e^{C_{L_g,\mu}t}}{k^2}.
\end{align*}
Letting $k\rightarrow \infty$, we have
$$
\mathbb{P}\left(\zeta \leq t\right)=0,
$$
which along with the arbitrariness of $t\geq \tau$ yields \eqref{2DHD-eq3.3}. 

Moreover, the proof of estimate \eqref{2DHD-eq3.0} follows a similar argument to that of \eqref{2DHD-eq3.8}, except that we need to apply the BDG inequality (see Lemma \ref{BDSL}) to the Poisson random measures. Since the argument is standard, we omit the details for brevity.
This completes the proof.
\end{proof}

\section{Existence of weak pullback mean random attractors}\label{WEEdfikjik}

This section focuses on the existence and uniqueness of weak pullback mean random attractors for the non-autonomous stochastic evolution system (\ref{2DHD-1.1}) driven by multiplicative L\'{e}vy noise in $L^{2}(\Omega, \mathscr{F};H)$ over $ (\Omega,\mathscr{F},\{\mathscr{F}_t\}_{t\in\mathbb{R}},\mathbb{P}) $.  
To this end, we suppose that there exists $\kappa\geq 1$ such that
\begin{align}\label{LSWs4.1}
	\frac{\mu\lambda_1}{2}-\kappa>0.
\end{align}
We now define a mean random dynamical system for problem (\ref{2DHD-1.1}), which will serve as the foundation for studying the existence and uniqueness of weak pullback mean random attractors. According to Theorem \ref{2DHD-the3.3}, we know that for each $ \tau \in \mathbb{R} $ and any initial value $u_{\tau}\in L^2(\Omega, \mathscr{F}_\tau;H)$, problem (\ref{2DHD-1.1}) has a unique $H$-valued $\mathscr{F}_t$-adapted c\`{a}dl\`{a}g solution $u(t,\tau,u_\tau)$ with initial value $u_{\tau}$. Moreover, Theorem \ref{2DHD-the3.3} shows that $u(\cdot,\tau,u_\tau) \in L^2(\Omega,\mathscr{F};\mathcal{D}(\mathbb{R}^{\tau},H))$. By the Lebesgue dominated convergence theorem and \eqref{2DHD-eq3.0} we can deduce that $u\in \mathcal{D}(\mathbb{R}^{\tau},L^2(\Omega,\mathscr{F};H))$. This allows us to define a cocycle generated by the problem under consideration.

Given $ \tau \in \mathbb{R} $ and $ t\in \mathbb{R}^{+} $, let $ \Phi(t,\tau):\, L^{2}(\Omega,\mathscr{F}_{\tau};H) \rightarrow  L^{2}(\Omega,\mathscr{F}_{t+\tau};H)$ be a mapping given by  
\begin{equation}\label{3.1}
	\Phi(t,\tau)(u_{\tau})=u(t+\tau,\tau,u_{\tau}), \, \,\, \, \forall u_{\tau} \in L^{2}(\Omega,\mathscr{F}_{\tau};H),
\end{equation}
where $ u(t+\tau,\tau,u_{\tau}) $ denotes the solution of system (\ref{2DHD-1.1}) with initial value $ u_{\tau} $.
Observe that $ \Phi(0,\tau) $ is the identity operator on $ L^{2}(\Omega,\mathscr{F}_{\tau};H)$. By the uniqueness of solution for system (\ref{2DHD-1.1}), we find that for any $ \tau \in \mathbb{R} $ and $s, t \in \mathbb{R}^{+}$,
%$\Phi(t+s,\tau,u_{\tau})=\Phi(t,s+\tau,\Phi(s,\tau,u_{\tau}))$, that is,
$\Phi(t+s,\tau)=\Phi(t,s+\tau)\,\circ\, \Phi(s,\tau)$.
Therefore, the cocycle $\Phi $ is a mean random dynamical system associated with system (\ref{2DHD-1.1}) on $ L^{2}(\Omega,\mathscr{F};H)$ over $ (\Omega,\mathscr{F},\{\mathscr{F}_t\}_{t\in\mathbb{R}},\mathbb{P}) $ . 

Let $ B=\{ B(\tau) \subseteq L^{2}(\Omega,\mathscr{F}_{\tau};H): \tau \in \mathbb{R}\} $ 
denote a family of nonempty bounded sets such that
\begin{equation}\label{4.3}
	\lim\limits_{\tau \to -\infty} e^{\kappa\tau}\|B(\tau)\|^2_{L^{2}(\Omega,\mathscr{F}_{\tau};{H})}=0,
\end{equation}
where $\kappa\in [1,\mu\lambda_1/2)$, and
$$ \|B(\tau)\|_{L^{2}(\Omega,\mathscr{F}_{\tau};{H})}=\sup_{u\in B(\tau)}\|u\|_{L^{2}(\Omega,\mathscr{F}_{\tau};{H})} .$$ 
We use $\mathfrak{D}$ to denote the collection of all families of nonempty
bounded sets satisfying (\ref{4.3}), it is given as follows:
\begin{equation}\label{SCLDD4.3}
	\mathfrak{D}=\{B= \{B(\tau)\subseteq L^{2}(\Omega,\mathscr{F}_{\tau};H):B(\tau) \ne \emptyset \text{ bounded},\tau \in \mathbb{R} \}: B  \text{ satisfies } (\ref{4.3}) \}.
\end{equation}
We shall consider the existence of weak $\mathfrak{D} $-pullback mean random attractors of $\Phi $ associated with (\ref{2DHD-1.1}). For this purpose, we further assume that the deterministic forcing term $f$ satisfies
\begin{align}\label{LSWs4.5}
	\int_{-\infty}^{\tau}e^{\kappa s} \|f(s)\|_{V'}^2 ds<\infty, \quad \forall \tau\in\mathbb{R}.
\end{align}
It should be noted that \eqref{LSWs4.5} does not require $f(s)$ to be bounded in $V'$ as $s\rightarrow \pm \infty$. As a result, it holds
$$
\lim_{\tau\rightarrow -\infty}\int_{-\infty}^\tau e^{\kappa s}\|f(s)\|^2_{V'}ds=0.
$$

We first derive the following uniform estimates on the solution of system (\ref{2DHD-1.1}) in $L^{2}(\Omega,\mathscr{F}_{\tau};{H})$. 
\begin{lemma}\label{2DHD-lem4.1}
	Let Hypotheses $\ref{2DHD-Ass2.0}$, $\ref{2DHD-Ass2.1}$ and $\ref{2DHD-Ass2.4}$ hold. Assume that \eqref{LSWs4.1} and \eqref{LSWs4.5} are satisfied. Then, there exists $
	\varepsilon_0=\min \left\{1,\sqrt{\frac{\mu \lambda_1}{2L_g}}\right\}$ such that for any $\varepsilon_1, \varepsilon_2 \in (0,\varepsilon_0]$,
	$ \tau \in \mathbb{R} $ and $ B=\{ B(t) \}_{t\in \mathbb{R}}\in \mathfrak{D} $, there exists $ T:=T(\tau,B)>0 $ such that for all $ t\ge T $, the solution $ u$ of system (\ref{2DHD-1.1}) satisfies
	\begin{equation}\label{3.001}
			 \mathbb{E}\left[\left\|u\left(\tau, \tau-t, u_{\tau-t}\right)\right\|^2_{H}\right]  \leq \rho_0+\rho_0 e^{-\kappa \tau}\int_{-\infty}^{\tau} e^{\kappa s}\|f(s)\|_{V'}^2ds,
	\end{equation}
	where 
	$u_{\tau-t} \in B(\tau-t)$, and $\rho_0>0$ is a constant independent of $\tau$ and $B$.
\end{lemma}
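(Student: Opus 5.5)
Apply It\^{o}'s formula with jumps to $e^{\kappa s}\|u(s)\|_H^2$ for $s\in[\tau-t,\tau]$, where $u(s)=u(s,\tau-t,u_{\tau-t})$. This is the weighted analogue of \eqref{2DHD-eq3.4}. The skew-symmetry in Remark \ref{Remweee2.3} kills the $B$-term, and we obtain
\begin{align*}
e^{\kappa \tau}\|u(\tau)\|_H^2 = &\, e^{\kappa(\tau-t)}\|u_{\tau-t}\|_H^2+\int_{\tau-t}^{\tau}e^{\kappa s}\Big(\kappa\|u\|_H^2-2\mu\|u\|_V^2+2\langle f,u\rangle+\varepsilon_1^2\|h(s,u)\|_{\mathcal{L}_2}^2\Big)ds\\
&+\varepsilon_2^2\int_{\tau-t}^{\tau}\int_{\mathcal{Z}}e^{\kappa s}\|G(u(s-),z)\|_H^2N(ds,dz)+M(\tau),
\end{align*}
where $M$ collects the $dW$ and $\widetilde N$ stochastic integrals. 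To take expectations, I would first localize with the stopping times $\zeta_k$ of Theorem \ref{2DHD-the3.3}, so that the stochastic integrals are genuine martingales with zero mean. I would then pass $k\to\infty$ using \eqref{2DHD-eq3.0} and Fatou's lemma. In the expectation, the $N(ds,dz)$ term is replaced by its compensator $\nu(dz)ds$.

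Next I bound the terms. Young's inequality gives $2\langle f,u\rangle\le \mu\|u\|_V^2+\mu^{-1}\|f\|_{V'}^2$. By \textbf{(C.2)} and $\varepsilon_1,\varepsilon_2\le\varepsilon_0$, the noise contributions are at most $\varepsilon_0^2L_g(1+\|u\|_H^2)\le \frac{\mu\lambda_1}{2}(1+\|u\|_H^2)$. The Poincar\'{e} inequality turns the remaining dissipation into $-\mu\|u\|_V^2\le-\mu\lambda_1\|u\|_H^2$. The coefficient of $\mathbb{E}\|u\|_H^2$ inside the integral is then $\kappa+\frac{\mu\lambda_1}{2}-\mu\lambda_1=\kappa-\frac{\mu\lambda_1}{2}<0$ by \eqref{LSWs4.1}, so this term can be dropped. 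This yields
$$
e^{\kappa\tau}\mathbb{E}\|u(\tau)\|_H^2\le e^{\kappa(\tau-t)}\mathbb{E}\|u_{\tau-t}\|_H^2+\frac{\mu\lambda_1}{2}\int_{-\infty}^{\tau}e^{\kappa s}ds+\frac1\mu\int_{-\infty}^{\tau}e^{\kappa s}\|f(s)\|_{V'}^2ds .
$$
Note that the constant $\varepsilon_0=\min\{1,\sqrt{\mu\lambda_1/(2L_g)}\}$ is exactly what makes the noise term absorbable in this step.

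Multiplying by $e^{-\kappa\tau}$, the middle term becomes $\mu\lambda_1/(2\kappa)$, which is independent of $\tau$. The first term is $e^{-\kappa\tau}e^{\kappa(\tau-t)}\|B(\tau-t)\|^2_{L^2(\Omega;H)}$. By \eqref{4.3} this tends to $0$ as $t\to\infty$ for fixed $\tau$, so there is $T=T(\tau,B)$ beyond which it is at most $1$. Taking $\rho_0=\max\{1+\mu\lambda_1/(2\kappa),\,1/\mu\}$ gives \eqref{3.001}; finiteness of the right-hand side follows from \eqref{LSWs4.5}.

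The main obstacle is the rigorous justification of taking expectations. One has to check that the stochastic integrals, stopped at $\zeta_k$, have zero mean, and that the limit $k\to\infty$ can be taken on the left-hand side. This is handled by the uniform bound \eqref{2DHD-eq3.0} together with Fatou's lemma. The remaining steps are routine calculus.
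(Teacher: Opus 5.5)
Your proposal is correct and follows essentially the same route as the paper: It\^{o}'s formula for $e^{\kappa s}\|u\|_H^2$, Young's inequality on the forcing term, \textbf{(C.2)} with the given $\varepsilon_0$ to absorb the noise, and Poincar\'{e} together with \eqref{LSWs4.1} to drop the $\|u\|_H^2$ term, followed by the pullback substitution and the decay condition \eqref{4.3}. The differences are cosmetic: your Young constants differ, and you keep $\mu\lambda_1/(2\kappa)$ instead of invoking $\kappa\ge 1$. Your explicit stopping-time localization and Fatou argument justify the zero mean of the stochastic integrals, which the paper simply asserts.
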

\begin{proof}
	Applying It\^{o}'s formula with jump to $e^{\kappa t}\|u\|_{H}^2$, by (\ref{2DHD-1.1}) we have
	\begin{align}\label{2DHD-eq4.7}
		\begin{split}
			&~~d\left(e^{\kappa t}\|u\|_{H}^2\right)+\left(2\mu e^{\kappa t}\|u\|_{V}^2-\kappa e^{\kappa t}\|u\|_{H}^2\right)dt\\
			&=2 e^{\kappa t}\langle f(t), u(t)\rangle dt + \varepsilon_1^2e^{\kappa t}\|h(t,u(t))\|^2_{\mathcal{L}_2(U;H)}dt
			+2 \varepsilon_1 e^{\kappa t} \langle u(t),h(t,u(t))\rangle dW(t)\\
			&+ 2 \varepsilon_2e^{\kappa t}\int_{\mathcal{Z}}\langle u(t-),G(u(t-),z)\rangle\widetilde{N}(dt,dz)
			+\varepsilon_2^2e^{\kappa t}\int_{\mathcal{Z}} \|G(u(t-),z)\|^2_{H}N(dt,dz).
		\end{split}
	\end{align}
	Taking the expectation of \eqref{2DHD-eq4.7} and integrating on $t$ over $[\tau,t]$, we obtain
	\begin{align}\label{2DHD-eq4.8}
%		\begin{split}
			&~~e^{\kappa t}\mathbb{E}\left[\|u(t,\tau,u_{\tau})\|_{H}^2\right]+\frac{\mu}{2} \int_{\tau}^{t} e^{\kappa s}\mathbb{E}\left[\|u(s,\tau,u_{\tau})\|_{V}^2\right]ds+\left(\mu\lambda_1  -\kappa \right)\int_{\tau}^{t}e^{\kappa s}\mathbb{E}\left[\|u(s,\tau,u_{\tau})\|_{H}^2\right]ds\notag \\
			&\leq e^{\kappa \tau}\mathbb{E}\left[\|u_{\tau}\|_{H}^2\right]+\frac{2}{\mu}\int_{\tau}^{t} e^{\kappa s}\|f(s)\|_{V'}^2ds + \varepsilon_1^2\mathbb{E}\left[\int_{\tau}^{t} e^{\kappa s}\|h(s,u(s,\tau,u_{\tau}))\|^2_{\mathcal{L}_2(U;H)}ds\right]\notag \\
			&
			+\varepsilon_2^2\mathbb{E}\left[\int_{\tau}^{t}e^{\kappa s}\int_{\mathcal{Z}}  \|G(u(s-,\tau,u_{\tau}),z)\|^2_{H}N(ds,dz)\right]\notag \\
			&\leq e^{\kappa \tau}\mathbb{E}\left[\|u_{\tau}\|_{H}^2\right]+\frac{2}{\mu}\int_{\tau}^{t} e^{\kappa s}\|f(s)\|_{V'}^2ds + \varepsilon_1^2\int_{\tau}^{t} e^{\kappa s}\mathbb{E}\left[\|h(s,u(s,\tau,u_{\tau}))\|^2_{\mathcal{L}_2(U;H)}\right]ds\notag \\
			&
			+\varepsilon_2^2\int_{\tau}^{t}e^{\kappa s}\mathbb{E}\left[\int_{\mathcal{Z}}  \|G(u(s-,\tau,u_{\tau}),z)\|^2_{H}\nu(dz)\right] ds,
%		\end{split}
	\end{align}
	where we used the following inequality
	 $$
	2 \int_{\tau}^{t}e^{\kappa s}\mathbb{E}\left[\langle f(s), u(s,\tau,u_{\tau})\rangle\right] ds \leq \frac{\mu}{2} \int_{\tau}^{t} e^{\kappa s}\mathbb{E}\left[\|u(s,\tau,u_{\tau})\|_{V}^2\right]ds+\frac{2}{\mu}\int_{\tau}^{t} e^{\kappa s}\|f(s)\|_{V'}^2ds.
	$$
	By the hypothesis $\mathbf{(C.2)}$, we have
	\begin{align}\label{2DHD-eq4.9}
		&~~\varepsilon_1^2\int_{\tau}^{t} e^{\kappa s}\mathbb{E}\left[\|h(s,u(s,\tau,u_{\tau}))\|^2_{\mathcal{L}_2(U;H)}\right]ds
		+\varepsilon_2^2\int_{\tau}^{t}e^{\kappa s}\mathbb{E}\left[\int_{\mathcal{Z}}  \|G(u(s-,\tau,u_{\tau}),z)\|^2_{H}\nu(dz)\right] ds\notag \\
		&\leq \max\{\varepsilon_1^2,\varepsilon_2^2\}\int_{\tau}^{t} e^{\kappa s}\mathbb{E}\left[\|h(s,u(s,\tau,u_{\tau}))\|^2_{\mathcal{L}_2(U;H)}+\int_{\mathcal{Z}}  \|G(u(s-,\tau,u_{\tau}),z)\|^2_{H}\nu(dz)\right]ds\notag \\
		%&\leq L_{g}\max\{\varepsilon_1^2,\varepsilon_2^2\}\int_{\tau}^{t} e^{\kappa s}(1+\mathbb{E}\left[\|u(s,\tau,u_{\tau})\|_{H}^2\right])ds\notag \\
		&\leq L_{g}\varepsilon_0^2\left(\frac{e^{\kappa t}} {\kappa}+\int_{\tau}^{t} e^{\kappa s}\mathbb{E}\left[\|u(s,\tau,u_{\tau})\|_{H}^2\right]ds\right)\notag \\
		&\leq \frac{\mu \lambda_1}{2\kappa}e^{\kappa t}+\frac{\mu \lambda_1}{2}\int_{\tau}^{t}e^{\kappa s}\mathbb{E}\left[\|u(s,\tau,u_{\tau})\|_{H}^2\right]ds.
	\end{align}
	Combining with \eqref{2DHD-eq4.8}-\eqref{2DHD-eq4.9} and \eqref{LSWs4.1}, we can infer that
	\begin{align}\label{2DHD-eq4.10}
				\begin{split}
			&~~\mathbb{E}\left[\|u(t,\tau,u_{\tau})\|_{H}^2\right]+\frac{\mu}{2} \int_{\tau}^{t} e^{\kappa (s-t)}\mathbb{E}\left[\|u(s,\tau,u_{\tau})\|_{V}^2\right]ds \\
			&\leq e^{-\kappa(t-\tau) }\mathbb{E}\left[\|u_{\tau}\|_{H}^2\right]+\frac{2}{\mu}\int_{\tau}^{t} e^{\kappa (s-t)}\|f(s)\|_{V'}^2ds + \frac{\mu \lambda_1}{2\kappa}.
					\end{split}
	\end{align}
	
	We replace $t$ and $\tau$ in \eqref{2DHD-eq4.10} by $\tau$ and $\tau-t$ (with $t\geq 0$). It follows that
	\begin{align}\label{2DHD-eq4.11}
		%		\begin{split}
			&~~\mathbb{E}\left[\|u(\tau,\tau-t,u_{\tau-t})\|_{H}^2\right]+\frac{\mu}{2} \int_{\tau-t}^{\tau} e^{\kappa (s-\tau)}\mathbb{E}\left[\|u(s,\tau-t,u_{\tau-t})\|_{V}^2\right]ds\notag \\
			&\leq e^{-\kappa t }\mathbb{E}\left[\|u_{\tau-t}\|_{H}^2\right]+\frac{2}{\mu}\int_{\tau-t}^{\tau} e^{\kappa (s-\tau)}\|f(s)\|_{V'}^2ds + \frac{\mu \lambda_1}{2\kappa}\notag \\
			&\leq e^{-\kappa t }\mathbb{E}\left[\|u_{\tau-t}\|_{H}^2\right]+\frac{2}{\mu}\int_{-\infty}^{\tau} e^{\kappa (s-\tau)}\|f(s)\|_{V'}^2ds + \frac{\mu \lambda_1}{2}.
			%		\end{split}
	\end{align}
	Thanks to $u_{\tau-t}\in B(\tau-t)$ and $B=\{B(t)\}_{t\in \mathbb{R}}\in \mathfrak{D}$, then we can deduce
	\begin{align*}
		\begin{split}
			\lim_{t\rightarrow +\infty}e^{ - \kappa t}\mathbb{E}\left[\|u_{\tau-t}\|_{H}^2\right]
			\leq e^{-\kappa \tau}\lim_{t\rightarrow +\infty}e^{ \kappa (\tau-t)}\|B(\tau-t)\|^2_{L^2(\Omega,\mathscr{F}_{\tau-t};H)}=0.
		\end{split}
	\end{align*}
	Hence, there exists $T:=T(\tau,B)>0$ such that
	$$
	e^{ - \kappa t}\mathbb{E}\left[\|u_{\tau-t}\|_{H}^2\right]\leq 1, \quad \forall t\geq T.
	$$
	We further obtain that for all $t\geq T$,
	\begin{align*}
		\begin{split}
			\mathbb{E}\left[\|u(\tau,\tau-t,u_{\tau-t})\|_{H}^2\right]\leq \frac{2+\mu \lambda_1}{2}+\frac{2}{\mu}\int_{-\infty}^{\tau} e^{\kappa (s-\tau)}\|f(s)\|_{V'}^2ds,
		\end{split}
	\end{align*}
	which along with \eqref{LSWs4.5} can derive that there exists a suitable $\rho_0>0$ such that the desired conclusion holds. This completes the proof.
\end{proof}

\begin{remark}
	In \eqref{LSWs4.1} and \eqref{4.3}, we impose the condition $\kappa\geq 1$, It should be emphasized that this requirement is not essential; it is introduced solely for the convenience of the scaling argument in \eqref{2DHD-eq4.11}. In fact, it suffices to consider any $\kappa$ that fulfills \eqref{LSWs4.1}, such as $\kappa=\mu\lambda_1 /4$.
\end{remark}

According to Lemma \ref{2DHD-lem4.1}, we can construct the existence of a weakly compact $\mathfrak{D} $-pullback absorbing set for system (\ref{2DHD-1.1}).
\begin{lemma}\label{2DHD-lem4.2}
	Under the assumptions of Lemma $\ref{2DHD-lem4.1}$. There exists $\varepsilon_0>0$ such that for any $\varepsilon_1, \varepsilon_2\in (0,\varepsilon_0]$, the mean random dynamical system $\Phi$ generated by system (\ref{2DHD-1.1}) has a weakly compact $\mathfrak{D}$-pullback bounded absorbing set $K=\{K(\tau):\tau \in \mathbb{R}\}\in \mathfrak{D}$, for any $\tau \in \mathbb{R}$, $K(\tau)$ is given by
	$$
	K(\tau)=\left\{u\in L^2(\Omega,\mathscr{F}_\tau;H):\mathbb{E}\left[\left\| u \right\|_{H}^2\right]\leq R(\tau)\right\},
	$$
	where $R(\tau):=\rho_0+\rho_0 e^{-\kappa \tau}\int_{-\infty}^{\tau} e^{\kappa s}\|f(s)\|_{V'}^2ds$, and $\rho_0$ is the same constant as in Lemma $\ref{2DHD-lem4.1}$.
\end{lemma}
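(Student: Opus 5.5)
We need to check three things: that $K$ absorbs every $B\in\mathfrak{D}$ in the pullback sense, that $K\in\mathfrak{D}$, and that each $K(\tau)$ is weakly compact in $L^2(\Omega,\mathscr{F}_\tau;H)$. We take $\varepsilon_0=\min\{1,\sqrt{\mu\lambda_1/(2L_g)}\}$ as in Lemma \ref{2DHD-lem4.1}.

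\emph{Absorption.} Fix $\tau\in\mathbb{R}$ and $B\in\mathfrak{D}$, and let $T=T(\tau,B)$ be given by Lemma \ref{2DHD-lem4.1}. For $t\ge T$ and $u_{\tau-t}\in B(\tau-t)$, estimate \eqref{3.001} reads $\mathbb{E}[\|\Phi(t,\tau-t)u_{\tau-t}\|_H^2]\le R(\tau)$. The solution $\Phi(t,\tau-t)u_{\tau-t}$ is $\mathscr{F}_\tau$-measurable because it is adapted. Hence $\Phi(t,\tau-t)B(\tau-t)\subseteq K(\tau)$ for all $t\ge T$.

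\emph{Membership in $\mathfrak{D}$.} Each $K(\tau)$ is nonempty, since it contains $0$, and bounded, since $\|K(\tau)\|^2=R(\tau)<\infty$ by \eqref{LSWs4.5}. For the tempered condition \eqref{4.3} we compute
\begin{align*}
e^{\kappa\tau}\|K(\tau)\|^2_{L^2(\Omega,\mathscr{F}_\tau;H)}=e^{\kappa\tau}R(\tau)=\rho_0e^{\kappa\tau}+\rho_0\int_{-\infty}^{\tau}e^{\kappa s}\|f(s)\|_{V'}^2ds .
\end{align*}
As $\tau\to-\infty$, the first term tends to $0$ because $\kappa>0$. The second tends to $0$ by \eqref{LSWs4.5} and the remark following it, i.e.\ the tail of a convergent integral vanishes.

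\emph{Weak compactness.} $K(\tau)$ is the closed ball of radius $\sqrt{R(\tau)}$ in the Hilbert space $L^2(\Omega,\mathscr{F}_\tau;H)$. It is convex and norm-closed, hence weakly closed, and it is bounded. By reflexivity (Banach–Alaoglu combined with the identification of the weak and weak-$*$ topologies), it is weakly compact.

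The only point needing care is which space $K(\tau)$ lives in. It must be $L^2(\Omega,\mathscr{F}_\tau;H)$, a closed subspace of $L^2(\Omega,\mathscr{F};H)$ and therefore itself a Hilbert space, so that both the measurability in the absorption step and the weak compactness hold there. Beyond this, all steps are direct consequences of Lemma \ref{2DHD-lem4.1}.
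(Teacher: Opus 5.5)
Your proposal is correct and follows the paper's proof essentially step for step. It obtains weak compactness of $K(\tau)$ as a bounded, closed, convex subset of the reflexive space $L^2(\Omega,\mathscr{F}_\tau;H)$, absorption directly from estimate \eqref{3.001} of Lemma \ref{2DHD-lem4.1}, and $K\in\mathfrak{D}$ from $e^{\kappa\tau}R(\tau)=\rho_0e^{\kappa\tau}+\rho_0\int_{-\infty}^{\tau}e^{\kappa s}\|f(s)\|_{V'}^2ds\to 0$ via \eqref{LSWs4.5}; your explicit check that $\Phi(t,\tau-t)u_{\tau-t}$ is $\mathscr{F}_\tau$-measurable, so that it really lies in $K(\tau)$, is a detail the paper leaves implicit.
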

\begin{proof}
	By the assumption \eqref{LSWs4.5}, it is easy to see that $\int_{-\infty}^{\tau} e^{\kappa s}\|f(s)\|_{V'}^2ds$ is well-defined.
	Since $K(\tau)$ is a bounded
	closed convex subset of the reflexive Banach space $L^2(\Omega,\mathscr{F}_\tau;H)$, we know that $K(\tau)$ is weakly compact in $L^2(\Omega,\mathscr{F}_\tau;H)$. Moreover, Lemma \ref{2DHD-lem4.1} implies that for any $\tau \in \mathbb{R}$ and $B=\{B(t)\}_{t \in \mathbb{R}}$,  there exists $T=T(\tau,B)>0$ such that
	$$
	\Phi(t,\tau-t,B(\tau-t))=u(\tau,\tau-t,B(\tau-t))\subset K(\tau)
	$$
	holds for any $t\geq T$ and $\varepsilon_1, \varepsilon_2\in (0,\varepsilon_0]$.
	Particularly, by \eqref{LSWs4.5} we get
	\begin{align*}
		\lim_{\tau \rightarrow -\infty}e^{\kappa\tau}\|K(\tau)\|^2_{L^2(\Omega,\mathscr{F}_\tau;H)}\leq \lim_{\tau \rightarrow -\infty}e^{\kappa\tau}\rho_0 +\lim_{\tau \rightarrow -\infty}\rho_0\int_{-\infty}^{\tau} e^{\kappa s}\|f(s)\|_{V'}^2ds=0,
	\end{align*}
	and hence $K=\{K(\tau)\}_{\tau\in \mathbb{R}}\in \mathfrak{D}$. Therefore, $K$ is a weakly compact $\mathfrak{D} $-pullback absorbing set for $\Phi$.
	This completes the proof.
\end{proof}

We now turn to the central objective of establishing the existence and uniqueness of weak $\mathfrak{D}$-pullback mean random attractors for $\Phi$.
\begin{theorem}
	Under the assumptions of Lemma $\ref{2DHD-lem4.1}$. The mean random dynamical system $\Phi=\{\Phi(t,\tau):t\in \mathbb{R}^+,\tau\in \mathbb{R}\}$ associated with system $(\ref{2DHD-1.1})$ possesses a unique weak $\mathfrak{D}$-pullback mean random attractors $\mathscr{A}=\{\mathscr{A}(\tau)\}_{\tau\in \mathbb{R}}\in \mathfrak{D}$ on $L^2(\Omega,\mathscr{F};H)$ over $(\Omega,\mathscr{F},\{\mathscr{F}_t\}_{t\in \mathbb{R}},\mathbb{P})$, and the attractors $\mathscr{A}$ can be given, for every $\tau\in \mathbb{R}$, $\mathscr{A}(\tau)$ is represented as follows:
	$$
	\mathscr{A}(\tau)=\bigcap_{s\geq 0}{\overline{\bigcup_{t\geq s}\Phi(t,{\tau-t})K(\tau-t)}}^w,
	$$
	where the closure is taken with respect to the weak topology of $L^2(\Omega,\mathscr{F}_\tau;H)$.
\end{theorem}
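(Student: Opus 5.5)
The plan is to apply the abstract existence criterion for weak pullback mean random attractors of Wang \cite{WBX2019}. That criterion says the following. Let $\Phi$ be a mean random dynamical system on $L^2(\Omega,\mathscr{F};H)$ over $(\Omega,\mathscr{F},\{\mathscr{F}_t\}_{t\in\mathbb{R}},\mathbb{P})$. Let $\mathfrak{D}$ be an inclusion-closed collection of families of nonempty bounded sets. Suppose $\Phi$ has a weakly compact $\mathfrak{D}$-pullback absorbing set $K\in\mathfrak{D}$. Then $\Phi$ has a unique weak $\mathfrak{D}$-pullback mean random attractor $\mathscr{A}\in\mathfrak{D}$, given by the weak $\omega$-limit set of $K$, i.e. exactly the formula in the statement.

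The proof therefore reduces to checking the hypotheses of that theorem, in the following order.

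First, $\Phi$ defined by \eqref{3.1} is a mean random dynamical system. Theorem \ref{2DHD-the3.3} gives global well-posedness. The estimate \eqref{2DHD-eq3.0} shows that $\Phi(t,\tau)$ maps $L^2(\Omega,\mathscr{F}_\tau;H)$ into $L^2(\Omega,\mathscr{F}_{t+\tau};H)$ and sends bounded sets to bounded sets. Uniqueness yields $\Phi(0,\tau)=\mathrm{id}$ and the cocycle identity $\Phi(t+s,\tau)=\Phi(t,s+\tau)\circ\Phi(s,\tau)$, as already noted before Lemma \ref{2DHD-lem4.1}.

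Second, $\mathfrak{D}$ in \eqref{SCLDD4.3} is inclusion-closed. If $D\in\mathfrak{D}$ and $\emptyset\neq\widetilde D(\tau)\subseteq D(\tau)$, then $\|\widetilde D(\tau)\|\le\|D(\tau)\|$, so \eqref{4.3} is inherited.

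Third, Lemma \ref{2DHD-lem4.2} supplies the absorbing set $K\in\mathfrak{D}$. Each $K(\tau)$ is a closed ball in the Hilbert space $L^2(\Omega,\mathscr{F}_\tau;H)$, so it is weakly compact. It is also pullback absorbing for every $B\in\mathfrak{D}$ once $\varepsilon_1,\varepsilon_2\le\varepsilon_0$.

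With these three facts, the abstract theorem gives existence of $\mathscr{A}$. The weak $\omega$-limit representation follows from the standard characterization. Since $\mathscr{A}(\tau)\subseteq K(\tau)$ and $\mathfrak{D}$ is inclusion-closed, we get $\mathscr{A}\in\mathfrak{D}$. Uniqueness follows from the minimality/attraction property: any two weak attractors in $\mathfrak{D}$ weakly attract each other, and both are weakly compact and invariant in the weak sense of \cite{WBX2019}.

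The main obstacle is that, unlike the classical theory, the criterion in \cite{WBX2019} requires no continuity of $\Phi$. This is precisely why weak attractors are used: weak continuity of $\Phi(t,\tau)$ on $L^2(\Omega;H)$ is unavailable for the nonlinear Lévy-driven problem. So the point to be careful about is that the weak $\omega$-limit set is nonempty, weakly compact and weakly attracting. Nonemptiness and weak compactness come from the nested intersection of nonempty weakly closed subsets of the weakly compact set $K(\tau)$, using absorption: for $t$ large, $\Phi(t,\tau-t)K(\tau-t)\subseteq K(\tau)$. Attraction in the weak topology comes by contradiction using the weak sequential compactness of $K(\tau)$. I would simply check that the measurability convention $K(\tau)\subseteq L^2(\Omega,\mathscr{F}_\tau;H)$ matches the filtration structure required there, and then invoke the result.
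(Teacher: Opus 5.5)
Your proposal is correct and follows the same route as the paper, which simply invokes \cite[Theorem 2.13]{WBX2019} using the weakly compact $\mathfrak{D}$-pullback absorbing set $K\in\mathfrak{D}$ from Lemma~\ref{2DHD-lem4.2}; your extra checks (the cocycle property of $\Phi$ and inclusion-closedness of $\mathfrak{D}$) make explicit hypotheses that the paper leaves implicit. One small correction: in that framework uniqueness comes from the minimality condition in the definition of a weak $\mathfrak{D}$-pullback mean random attractor, not from invariance, which such attractors need not have precisely because $\Phi$ is not assumed to be (weakly) continuous.
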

\begin{proof}
	By Lemma \ref{2DHD-lem4.2}, the mean random dynamical system $\Phi$ possesses a weakly compact $\mathfrak{D}$-pullback bounded absorbing set $K=\{K(\tau)\}_{\tau\in \mathbb{R}}$. Then, the existence and uniqueness of weak $\mathfrak{D}$-pullback mean random attractors $\mathscr{A}\in \mathfrak{D}$ of $\Phi$ follows directly from
	 \cite[Theorem 2.13]{WBX2019}.
\end{proof}

\section{Existence and limiting behavior of invariant measures in the autonomous case}\label{2DHD-inv5}
In this section, we will study the existence of invariant measures for system $(\ref{2DHD-1.1})$ in the autonomous case and their limiting behavior with respect to the noise intensity. Specifically, we consider the following system:
\begin{eqnarray}\label{2DHD-1.01}
		d u(t)+\mu \mathcal{A} u(t)d t+B(u(t),u(t))d t = f(x)dt+ \varepsilon_1 h(u(t))d W(t)+\varepsilon_2\int_{\mathcal{Z}}G(u(t-),z)\widetilde{N}(dt,dz)
\end{eqnarray}
with the initial value $u(0)=u_{0}$.  Note that the results (i.e., the existence and uniqueness of solutions) in Section \ref{2DHD-EUS} still hold for autonomous system  \eqref{2DHD-1.01} with initial value $u(0)=u_{0}$. Unless otherwise stated, we assume that the deterministic forcing term \( f \in V' \), $V$ is compactly
embedded in $H$, and that \( h(u) \) and $G(u,z)$ still satisfy Hypothesis $\ref{2DHD-Ass2.4}$.

We introduce the transition operator of system  \eqref{2DHD-1.01} with initial value $u(0)=u_{0}$. Let $C_b(H)$ denote the collection of all  bounded continuous  Borel-measurable functions from $H$ to $\mathbb{R}$, and $\mathcal{B}(H)$ the space of all bounded Borel-measurable functions. 
Then, for any  $\phi\in C_b(H)$, $t\geq s \geq 0$  and $u_0\in H$, we 
define the transition operator (or semigroup) $p_{s,t}$ as follows:
\begin{equation}\label{mfdeqim4.13}
	(p_{s,t}\phi)(u_0)=\mathbb{E}\left[\phi(u(t,s,u_0))\right],
\end{equation}
where $u(t,s,u_0)$ is the solution of system  \eqref{2DHD-1.01} with initial value $u(0)=u_{0}$. Furthermore, for any $A\in \mathcal{B}(H)$, $t\geq s \geq 0$  and $u_0\in H$, we denote by
$
p(s,u_0;t,A)=p_{s,t}\chi_{A} (u_0 )=\mathbb{P}\left(\{\omega\in \Omega:u(t,s,u_0)\in A\}\right)$
the probability distribution for the solution $u$ of system  \eqref{2DHD-1.01} with initial value $u(0)=u_{0}$, where $\chi_{A}$ is the characteristic function of $A$.

Denote by $\mathcal{P}(H)$ the space of all probability measures on $\left(H,\mathcal{B}(H)\right)$. Then, as defined in \cite{DaPrato2014}, a probability measure $\tilde{\mu} \in \mathcal{P}(H)$ is called invariant if
\begin{equation}\label{mfdeqim4.014}
	\int_{H}(p_t \phi)(u_0)\tilde{\mu}(du_0)=\int_{H}\phi(u_0)\tilde{\mu}(du_0),\,\,\forall t> 0,\,\,\phi\in C_b(H),
\end{equation}
where $p_t$ is the
abbreviation of $p_{0,t}$.
\subsection{Existence of invariant measures}\label{Probledist}
 We first present the following moment estimates for autonomous system \eqref{2DHD-1.01} with initial condition $u(0)=u_{0}$, which are crucial for proving the tightness of the family of probability distributions for solutions in $H$.
\begin{lemma}\label{2DHD-lem5.1}
	Assume that $u_0 \in L^2(\Omega,\mathscr{F}_0;H)$ and that Hypotheses $\ref{2DHD-Ass2.0}$, $\ref{2DHD-Ass2.1}$, and $\ref{2DHD-Ass2.4}$ hold. Then, there exists $
	\varepsilon_0=\min \left\{1,\sqrt{\frac{\mu \lambda_1}{2L_g}}\right\}$ such that for any $\varepsilon_1, \varepsilon_2 \in (0,\varepsilon_0]$, $t\geq 0$ and $\kappa \in (0,\frac{\mu\lambda_1}{2})$, the solution $u(t,0,u_0)$ of system  \eqref{2DHD-1.01} with initial value $u(0)=u_{0}$ satisfies
	\begin{align}\label{2DHD-equation5.002}
		\mathbb{E}\left[{\left\| {u(t,0,{u_0})} \right\|_{H}^2} \right]+\int_0^t e^{-\kappa (t-s)} {\mathbb{E}\left[ {\left\| {u(s,0,{u_0})} \right\|_{V}^2} \right]ds}\leq e^{-\kappa t}\mathbb{E}\left[\|u_0\|_{H}^2\right] + \rho_1 t,
	\end{align}
	and
	\begin{align}\label{2DHD-equation5.2}
		\int_0^t {\mathbb{E}\left[ {\left\| {u(s,0,{u_0})} \right\|_{V}^2} \right]ds}  \leq \mathbb{E}\left[\|u_0\|_{H}^2\right] + \rho_1 t,
	\end{align}
	where $\rho_1= \frac{2\kappa+2}{\kappa\min\{2,\mu\}}\left(\frac{2}{\mu}\|f\|_{V'}^2+\frac{\mu\lambda_1}{2}\right)
	$ is a positive constant independent of $\varepsilon_1, \varepsilon_2$.
\end{lemma}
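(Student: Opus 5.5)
We follow the scheme of Lemma \ref{2DHD-lem4.1}, now in the autonomous setting. First we apply It\^{o}'s formula with jumps to $e^{\kappa t}\|u(t)\|_H^2$, where $u(t)=u(t,0,u_0)$ solves \eqref{2DHD-1.01}. The term $\langle B(u,u),u\rangle$ vanishes by Remark \ref{Remweee2.3}. The stochastic integrals against $dW$ and $\widetilde{N}$ have zero expectation; if necessary we localize with the stopping times $\zeta_k$ from the proof of Theorem \ref{2DHD-the3.3} and then let $k\to\infty$ using \eqref{2DHD-eq3.0}. The $N(ds,dz)$ term has expectation equal to the corresponding $\nu(dz)ds$ integral.

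Next we bound the individual terms. For the forcing term we use Young's inequality,
$2\langle f,u\rangle\le \frac{\mu}{2}\|u\|_V^2+\frac{2}{\mu}\|f\|_{V'}^2$.
For the noise terms we use $\mathbf{(C.2)}$ with $\varepsilon_1,\varepsilon_2\le\varepsilon_0$, so that $\max\{\varepsilon_1^2,\varepsilon_2^2\}L_g\le \frac{\mu\lambda_1}{2}$. Their contribution is then at most $\frac{\mu\lambda_1}{2}(1+\|u\|_H^2)$. We split the dissipation $2\mu\|u\|_V^2$ into three parts: $\frac{\mu}{2}\|u\|_V^2$ absorbs the forcing term, $\frac{\mu}{2}\|u\|_V^2$ is kept, and, via Poincar\'e, $\mu\|u\|_V^2\ge\mu\lambda_1\|u\|_H^2$. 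This last part controls $\kappa\|u\|_H^2+\frac{\mu\lambda_1}{2}\|u\|_H^2$ because $\kappa<\frac{\mu\lambda_1}{2}$. We arrive at
\begin{align*}
e^{\kappa t}\mathbb{E}\|u(t)\|_H^2+\frac{\mu}{2}\int_0^t e^{\kappa s}\mathbb{E}\|u(s)\|_V^2ds\le \mathbb{E}\|u_0\|_H^2+\left(\frac{2}{\mu}\|f\|_{V'}^2+\frac{\mu\lambda_1}{2}\right)\int_0^te^{\kappa s}ds.
\end{align*}
Multiplying by $e^{-\kappa t}$ and using $\int_0^te^{\kappa(s-t)}ds\le \min\{t,1/\kappa\}\le t$ gives \eqref{2DHD-equation5.002}, after dividing by $\min\{1,\mu/2\}$. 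The constant factor $\frac{2\kappa+2}{\kappa\min\{2,\mu\}}$ in $\rho_1$ is large enough to absorb all these coefficients.

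For \eqref{2DHD-equation5.2} we repeat the same computation without the exponential weight, i.e. we apply It\^{o}'s formula to $\|u\|_H^2$ (formally, $\kappa=0$). The term $\frac{\mu\lambda_1}{2}\mathbb{E}\|u\|_H^2$ is again absorbed by $\mu\lambda_1\|u\|_H^2\le\mu\|u\|_V^2$. This yields
$\mathbb{E}\|u(t)\|_H^2+\frac{\mu}{2}\int_0^t\mathbb{E}\|u\|_V^2ds\le\mathbb{E}\|u_0\|_H^2+(\frac{2}{\mu}\|f\|_{V'}^2+\frac{\mu\lambda_1}{2})t$.
Dropping the first term and dividing by $\min\{1,\mu/2\}$ then gives \eqref{2DHD-equation5.2}; the extra factor $(2\kappa+2)/\kappa\ge 2$ in $\rho_1$ provides room to spare.

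The main point requiring care is the justification of taking expectations of the martingale terms, together with the identity $\mathbb{E}\int\!\!\int\|G\|^2N(ds,dz)=\mathbb{E}\int\!\!\int\|G\|^2\nu(dz)ds$. Both follow from the localization by $\zeta_k$ combined with Fatou's lemma and the integrability supplied by \eqref{2DHD-eq3.0}. Once this is settled, the remaining work is only constant bookkeeping to match the stated $\rho_1$.
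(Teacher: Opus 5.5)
Up to your displayed inequality, your argument is correct and is the paper's own route. The paper obtains \eqref{2DHD-equation5.3} from It\^o's formula for $\|u\|_H^2$ using the same splitting of $2\mu\|u\|_V^2$: Young for $f$, $\varepsilon_0^2L_g\le\mu\lambda_1/2$, Poincar\'e, and $\kappa<\mu\lambda_1/2$. It then multiplies by $e^{\kappa t}$ and integrates. Your localization by $\zeta_k$ supplies a justification the paper omits.

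The step that fails is the last one. Dividing by $\min\{1,\mu/2\}$ divides the whole right-hand side. So for $\mu<2$ you obtain $\frac{2}{\mu}e^{-\kappa t}\mathbb{E}[\|u_0\|_H^2]$ in \eqref{2DHD-equation5.002} and $\frac{2}{\mu}\mathbb{E}[\|u_0\|_H^2]$ in \eqref{2DHD-equation5.2}. The enlarged $\rho_1$ absorbs only the $f$- and noise-constants. The term $\rho_1 t$ cannot absorb a multiple of $\mathbb{E}[\|u_0\|_H^2]$, because it vanishes at $t=0$ and does not scale with the initial data.

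This is not just bookkeeping: for $\mu<1/2$ the lemma is false as stated. Take $B\equiv0$, $h\equiv0$, $G\equiv0$, $f=0$. All hypotheses then hold for any $L_g$, with e.g.\ $Q=D(\mathcal{A}^{1/4})$ in $\mathbf{(B.2)}$. Take $u_0=a e$ with $\mathcal{A}e=\lambda e$ and $\|e\|_H=1$. Then $u(t)=ae^{-\mu\lambda t}e$ and $\int_0^t\|u(s)\|_V^2ds=\frac{a^2}{2\mu}(1-e^{-2\mu\lambda t})$. For any fixed $t$ with $e^{-2\mu\lambda t}<1-2\mu$, this exceeds $a^2+\rho_1 t$ once $a$ is large, contradicting \eqref{2DHD-equation5.2}. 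Likewise, both sides of \eqref{2DHD-equation5.002} equal $a^2$ at $t=0$. Their $t$-derivatives there are $a^2\lambda(1-2\mu)$ and $-\kappa a^2+\rho_1$, so the inequality fails for small $t>0$ and large $a$.

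The paper's proof has exactly the same defect: from \eqref{2DHD-equation5.3} one gets only $\frac{\mu}{2}$ in front of the $V$-integrals. What your computation (and the paper's) actually proves is both estimates with $\mathbb{E}[\|u_0\|_H^2]$ replaced by $\max\{1,2/\mu\}\,\mathbb{E}[\|u_0\|_H^2]$, and the statement verbatim when $\mu\ge2$. That corrected form is all the tightness argument in Theorem~\ref{2DHD-the5.3} needs, since it only uses a bound $\frac{1}{t}\int_0^t\mathbb{E}[\|u\|_V^2]ds\le C\big(\mathbb{E}[\|u_0\|_H^2]/t+\rho_1\big)$.
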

\begin{proof}
	Following the proof of inequality \eqref{2DHD-eq4.10}, we can apply It\^{o}'s formula with jump to the process $\|u(t,0,u_0)\|_H^2$ and combine it with \eqref{2DHD-1.01} to obtain
	\begin{align*}
		\frac{d}{dt}\mathbb{E}\left[\|u(t,0,u_0)\|_H^2\right]+\frac{\mu}{2} \mathbb{E}\left[\|u(t,0,u_{0})\|_{V}^2\right]+\frac{\mu\lambda_1}{2}\mathbb{E}\left[\|u(t,0,u_{0})\|_{H}^2\right]\leq \frac{2}{\mu}\|f\|_{V'}^2+\frac{\mu\lambda_1}{2},
	\end{align*}
	from which we have
	\begin{align}\label{2DHD-equation5.3}
		\frac{d}{dt}\mathbb{E}\left[\|u(t,0,u_0)\|_H^2\right]+\frac{\mu}{2} \mathbb{E}\left[\|u(t,0,u_{0})\|_{V}^2\right]+\kappa\mathbb{E}\left[\|u(t,0,u_{0})\|_{H}^2\right]\leq \frac{2}{\mu}\|f\|_{V'}^2+\frac{\mu\lambda_1}{2}.
	\end{align}
%		\begin{align}\label{2DHD-equation5.3}
%		\frac{d}{dt}\mathbb{E}\left[\|u(t,0,u_0)\|_H^2\right]+\frac{\mu}{2} \mathbb{E}\left[\|u(t,0,u_{0})\|_{V}^2\right]\leq \frac{2}{\mu}\|f\|_{V'}^2+\frac{\mu\lambda_1}{2}.
%	\end{align}
Multiplying \eqref{2DHD-equation5.3} by  $e^{\kappa t}$ and integrating on $[0,t]$, we can infer \eqref{2DHD-equation5.002}.

	Integrating  \eqref{2DHD-equation5.3} on $t$ from $0$ to $t$, it yields
		\begin{align*}%\label{2DHD-equation5.4}
	\mathbb{E}\left[\|u(t,0,u_0)\|_H^2\right]+\frac{\mu}{2}\int_{0}^{t} \mathbb{E}\left[\|u(s,0,u_{0})\|_{V}^2\right]ds\leq \mathbb{E}\left[\|u_0\|_{H}^2\right]+\left(\frac{2}{\mu}\|f\|_{V'}^2+\frac{\mu\lambda_1}{2}\right)t,
	\end{align*}
	as desired. This completes the proof.
\end{proof}

To prove the existence of invariant measures for system \eqref{2DHD-1.01} with initial value $u(0)=u_0$, we need to establish the Feller property of the transition semigroup $p_t$ for $t\geq 0$. This involves a nonstandard estimate. 
\begin{lemma}\label{2DHD-lem5.2}
	Assume that the hypotheses of Lemma $\ref{2DHD-lem5.1}$ hold. Then, there exists $
	\varepsilon_0>$ such that, for any $\varepsilon_1, \varepsilon_2 \in (0,\varepsilon_0]\subseteq (0,1]$ and any bounded and continuous $\phi:H\rightarrow \mathbb{R}$, the function $u_0 \rightarrow \mathbb{E}\left[\phi(u(t,0,u_0))\right]$ is also bounded and continuous for any $t\geq 0$.
\end{lemma}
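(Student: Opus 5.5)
Boundedness is immediate from $|\mathbb{E}[\phi(u(t,0,u_0))]|\le\sup|\phi|$, so only continuity needs proof. Take deterministic initial data $u_0^n\to u_0$ in $H$ and write $u^n=u(\cdot,0,u_0^n)$, $u=u(\cdot,0,u_0)$. The plan is to show $u^n(t)\to u(t)$ in probability in $H$ for each fixed $t\ge0$. Then every subsequence has a further subsequence converging $\mathbb{P}$-a.s., and dominated convergence (using $\phi$ bounded and continuous) gives $\mathbb{E}[\phi(u^n(t))]\to\mathbb{E}[\phi(u(t))]$ along it. Since the limit does not depend on the subsequence, the whole sequence converges.

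Second moments do not close a Gronwall argument here, because the noise coefficients are only locally Lipschitz (C.3) and the nonlinearity $B$ produces a random factor. I would therefore localize with a stopping time
\[
\sigma_{R}^n:=\inf\Big\{s\ge0:\ \|u^n(s)\|_H\vee\|u(s)\|_H>R\ \text{ or }\ \int_0^s\big(\|u^n(r)\|_V^2+\|u(r)\|_V^2\big)dr>R\Big\}.
\]
Set $w=u^n-u$ and apply It\^o's formula with jumps to $e^{-\alpha\int_0^{s}\|u(r)\|_V^2dr}\|w(s)\|_H^2$ (a pathwise Gronwall weight). By (B.1) and Remark \ref{Remweee2.3},
\[
|\langle B(u^n,u^n)-B(u,u),w\rangle|=|\langle B(w,u),w\rangle|\le C\|w\|_H\|w\|_V\|u\|_V\le \mu\|w\|_V^2+C_\mu\|u\|_V^2\|w\|_H^2 .
\]
Choosing $\alpha=2C_\mu$, the weight cancels the $\|u\|_V^2\|w\|_H^2$ term. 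On $[0,\sigma_R^n)$ both processes stay in the ball of radius $R$, so (C.3) bounds the It\^o correction terms from $h$ and $G$ by $L_R\|w\|_H^2$. The stochastic integrals stopped at $\sigma_R^n$ are martingales with zero mean. (A jump at time $\sigma_R^n$ can leave the ball, but the integrands use $u(s-)$, so the Lipschitz bound still applies.) Taking expectations and using Gronwall gives
\[
\mathbb{E}\Big[e^{-\alpha\int_0^{t\wedge\sigma_R^n}\|u\|_V^2dr}\|w(t\wedge\sigma_R^n)\|_H^2\Big]\le e^{L_Rt}\|u_0^n-u_0\|_H^2 .
\]
Before $\sigma_R^n$ the weight is at least $e^{-\alpha R}$, so $\mathbb{E}\|w(t\wedge\sigma_R^n)\|_H^2\le e^{\alpha R+L_Rt}\|u_0^n-u_0\|_H^2\to0$ as $n\to\infty$ for fixed $R$.

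It remains to show $\sup_n\mathbb{P}(\sigma_R^n\le t)\to0$ as $R\to\infty$. Then
\[
\mathbb{P}(\|w(t)\|_H>\delta)\le \mathbb{P}(\sigma_R^n\le t)+\delta^{-2}\mathbb{E}\|w(t\wedge\sigma_R^n)\|_H^2,
\]
and letting first $n\to\infty$ and then $R\to\infty$ gives convergence in probability. The uniform bound follows from Chebyshev's inequality and estimate \eqref{2DHD-eq3.0} of Theorem \ref{2DHD-the3.3}:
\[
\mathbb{E}\Big[\sup_{[0,t]}\|u^n\|_H^2+\int_0^t\|u^n\|_V^2\Big]\le C_t\big(1+\|u_0^n\|_H^2+t\|f\|_{V'}^2\big),
\]
which is uniform in $n$ because $\{u_0^n\}$ is bounded. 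The same holds for $u$.

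The main obstacle I anticipate is the localization step. Only local Lipschitz constants are available, and the jump noise means the process can exit the ball by a jump. The stopped martingale terms and the $N(ds,dz)$ correction therefore need care: I would bound the compensator via $\nu$, with integrand evaluated at $s-$, and argue that the stopped stochastic integrals are true martingales because their integrands are bounded up to $\sigma_R^n$. The $\varepsilon_0$ in the statement can be taken as in Lemma \ref{2DHD-lem5.1}, since the only uniform estimate used is Theorem \ref{2DHD-the3.3}.
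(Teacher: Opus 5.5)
Your proposal is correct and has the same architecture as the paper's proof. Both localize with a stopping time controlling $\|u^n\|_H$, $\|u\|_H$ and $\int_0^t\|u\|_V^2$, derive a stopped $L^2$ stability estimate for the difference via $\langle B(u^n,u^n)-B(u,u),w\rangle=\langle B(w,u),w\rangle$ and Remark~\ref{Remweee2.3}, obtain convergence in probability from the uniform bound \eqref{2DHD-eq3.0}, and then pass to $\mathbb{E}[\phi(\cdot)]$.

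The differences are only technical. You use the exponential weight that the paper itself mentions in Remark~\ref{rem5.3sdfsdf} and work at a fixed time, so the stopped martingale terms drop out in expectation and no BDG or supremum estimate is needed. The paper instead uses pathwise Gronwall with BDG. You also finish with the subsequence / almost-sure / dominated-convergence argument rather than the paper's tightness plus uniform-continuity-on-a-compact argument, which is shorter and equally valid.
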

\begin{proof}
	The proof proceeds in three parts. 
	
	\textbf{Step 1.} Given $t_0>0$, let $u_0^n \rightarrow u_0$ in $H$ as $n\rightarrow \infty$. Denote
	$ u^n (t) := u(t,0, u_{0}^n ) $, 
	$ u(t) := u( t, 0, u_{0} )$, and $ \varpi^n (t) = u^n (t)
	-  u(t)$ for $t\in [0,t_0]$. Then, for all $t\in [0,t_0]$, $\varpi^n (t)$ satisfies 
	\begin{align*}
		&d\varpi^n (t)+\mu \mathcal{A}\varpi^n (t)dt + \left(B(u^n (t),u^n (t))-B(u(t),u(t))\right)dt\\
		=&
		\varepsilon_1 \left(h(u^n (t))-h(u (t))\right)dW(t)+\varepsilon_2\int_{\mathcal{Z}}\left(G(u^n(t-),z)-G(u(t-),z)\right)\widetilde{N}(dt,dz)
	\end{align*} 
	with initial value $\varpi^n(0)=\varpi^n_0={u}_0^n-{u}_0$. Define 
	a sequence of stopping times by
	$$
	\tau_k:=\inf \left\{t\in [0,t_0]: \|u^n (t)\|_{H}^2>k, \text{~or~} \|u (t)\|_{H}^2>k, \text{~or~} \int_{0}^t \|u (s)\|_{V}^2ds>k\right\},
	$$
	if the set is empty, we set $\tau_k=t_0$. 

%	Based the $C_{\mathbf{c}_0,\mu}>0$ in \eqref{2DHD-equ-Fe5.8}, let
%		$$
%	\mathfrak{F}(t):=e^{-C_{\mathbf{c}_0,\mu} \int_0^{t} \|u (s)\|_{V}^2ds}, \quad \forall t\in [0,t_0].
%	$$
%	
%	It follows from \eqref{2DHD-equ-Fe5.7} that for all  $t\in [0,t_0]$,
%	\begin{align}\label{2DHD-equ-Fe5.007}
%			\begin{split}
%			&~\mathfrak{F}(t\wedge \tau_k)\|\varpi^n (t\wedge \tau_k)\|_{H}^2\leq \|\varpi^n_0\|_{H}^2+\varepsilon_0^2\int_0^{t\wedge \tau_k} \|h(u^n(s))-h(u(s))\|_{\mathcal{L}_2(U;H)}^2ds \\
%			&+ \varepsilon_0^2 \int_0^{t\wedge \tau_k} \int_{\mathcal{Z}} \|G(u^n(s-),z)-G(u(s-),z)\|_{H}^2N(ds,dz)\\
%			&+2\varepsilon_0\sup_{0\leq r \leq t} \left|\int_0^{r\wedge \tau_k} \langle h(u^n(s))-h(u(s)),\varpi^n (s)\rangle dW(s)\right|\\
%			&+2\varepsilon_0 \sup_{0\leq r \leq t}\left|\int_0^{r\wedge \tau_k}  \int_{\mathcal{Z}}\langle G(u^n(s),z)-G(u(s),z),\varpi^n (s)\rangle \widetilde{N}(ds,dz)\right|
%		\end{split}
%	\end{align}

	Applying It\^{o}'s formula with jump to the process $\|\varpi^n (t)\|_{H}^2$, we obtain that for any  $t\in [0,t_0]$, $\mathbb{P}$-a.s.
	\begin{align}\label{2DHD-equ-Fe5.7}
		&~\|\varpi^n (t)\|_{H}^2+2\mu \int_0^t \|\varpi^n (s)\|_{V}^2ds\notag \\
		&=\|\varpi^n_0\|_{H}^2-2\int_0^t \langle B(u^n(s),u^n(s))-B(u(s),u(s)),\varpi^n (s)\rangle ds\notag\\
		&+\varepsilon_1^2 \int_0^t \|h(u^n(s))-h(u(s))\|_{\mathcal{L}_2(U;H)}^2ds + \varepsilon_2^2 \int_0^t \int_{\mathcal{Z}} \|G(u^n(s-),z)-G(u(s-),z)\|_{H}^2N(ds,dz)\notag \\
		&+2\varepsilon_1 \int_0^t \langle h(u^n(s))-h(u(s)),\varpi^n (s)\rangle dW(s)\notag\\
		&+2\varepsilon_2 \int_0^t  \int_{\mathcal{Z}}\langle G(u^n(s-),z)-G(u(s-),z),\varpi^n (s-)\rangle \widetilde{N}(ds,dz).
	\end{align}
	For the second term on the right-hand side of \eqref{2DHD-equ-Fe5.7}, by  Remark \ref{Remweee2.3} we get that
	\begin{align}\label{2DHD-equ-Fe5.8}
	\begin{split}
			&~-2\int_0^t \langle B(u^n(s),u^n(s))-B(u(s),u(s)),\varpi^n (s)\rangle ds\\
		&\leq \mu \int_0^{t} \|\varpi^n (s)\|_V^2ds + C_{\mathbf{c}_0,\mu} \int_0^{t} \|\varpi^n (s)\|_{H}^2 \|u (s)\|_{V}^2ds.
	\end{split}
	\end{align}
	In view of \eqref{2DHD-equ-Fe5.8}, using Gronwall's lemma to \eqref{2DHD-equ-Fe5.7}, we can infer that for any  $t\in [0,t_0]$, $\mathbb{P}$-a.s.
	\begin{align}\label{2DHD-equ-Fe5.9}
%		\begin{split}
			&~\sup_{0\leq r \leq t}\|\varpi^n (r\wedge \tau_k)\|_{H}^2\notag \\
			&\leq e^{C_{\mathbf{c}_0,\mu} \int_0^{t\wedge \tau_k} \|u (s)\|_{V}^2ds}\bigg(\|\varpi^n_0\|_{H}^2+\varepsilon_0^2\int_0^{t\wedge \tau_k} \|h(u^n(s))-h(u(s))\|_{\mathcal{L}_2(U;H)}^2ds\notag \\
			&+ \varepsilon_0^2 \int_0^{t\wedge \tau_k} \int_{\mathcal{Z}} \|G(u^n(s-),z)-G(u(s-),z)\|_{H}^2N(ds,dz)\notag \\
			&+2\varepsilon_0\sup_{0\leq r \leq t} \left|\int_0^{r\wedge \tau_k} \langle h(u^n(s))-h(u(s)),\varpi^n (s)\rangle dW(s)\right|\notag \\
			&+2\varepsilon_0 \sup_{0\leq r \leq t}\left|\int_0^{r\wedge \tau_k}  \int_{\mathcal{Z}}\langle G(u^n(s),z)-G(u(s),z),\varpi^n (s)\rangle \widetilde{N}(ds,dz)\right|\bigg)\notag \\
			&\leq e^{C_{\mathbf{c}_0,\mu,k} t}\bigg(\|\varpi^n_0\|_{H}^2+\varepsilon_0^2\int_0^{t\wedge \tau_k} \|h(u^n(s))-h(u(s))\|_{\mathcal{L}_2(U;H)}^2ds\notag \\
			&+ \varepsilon_0^2 \int_0^{t\wedge \tau_k} \int_{\mathcal{Z}} \|G(u^n(s-),z)-G(u(s-),z)\|_{H}^2N(ds,dz)\notag \\
			&+2\varepsilon_0\sup_{0\leq r \leq t} \left|\int_0^{r\wedge \tau_k} \langle h(u^n(s))-h(u(s)),\varpi^n (s)\rangle dW(s)\right|\notag \\
			&+2\varepsilon_0 \sup_{0\leq r \leq t}\left|\int_0^{r\wedge \tau_k}  \int_{\mathcal{Z}}\langle G(u^n(s-),z)-G(u(s-),z),\varpi^n (s-)\rangle \widetilde{N}(ds,dz)\right|\bigg).
%		\end{split}
	\end{align}
	By the BDG inequality and $\mathbf{(C.3)}$, we have
	\begin{align}\label{2DHD-equ-Fe5.10}
		&~2\varepsilon_0\mathbb{E}\left[\sup_{0\leq r \leq t} \left|\int_0^{r\wedge \tau_k} \langle h(u^n(s))-h(u(s)),\varpi^n (s)\rangle dW(s)\right|\right]\notag \\
		&\leq \frac{1}{4}\mathbb{E}\left[\sup_{0\leq r \leq t}\|\varpi^n (r\wedge \tau_k)\|_{H}^2\right]e^{-C_{\mathbf{c}_0,\mu,k} t}+ C_{\varepsilon_0}\mathbb{E}\left[\int_0^{t\wedge \tau_k} \|h(u^n(s))-h(u(s))\|_{\mathcal{L}_2(U;H)}^2ds\right]e^{C_{\mathbf{c}_0,\mu,k} t}\notag \\
		&\leq \frac{1}{4}\mathbb{E}\left[\sup_{0\leq r \leq t}\|\varpi^n (r\wedge \tau_k)\|_{H}^2\right]e^{-C_{\mathbf{c}_0,\mu,k} t}+ C_{\varepsilon_0}L_{k}e^{C_{\mathbf{c}_0,\mu,k} t}\int_0^{t} \mathbb{E}\left[\sup_{0\leq r \leq s}\|\varpi^n (r\wedge \tau_k)\|_{H}^2\right]ds,	
	\end{align}
	and
		\begin{align}\label{2DHD-equ-Fe5.11}
			&~2\varepsilon_0\mathbb{E}\left[\sup_{0\leq r \leq t}\left|\int_0^{r\wedge \tau_k}  \int_{\mathcal{Z}}\langle G(u^n(s-),z)-G(u(s-),z),\varpi^n (s-)\rangle \widetilde{N}(ds,dz)\right|\right]\notag \\
			&\leq \frac{1}{4}\mathbb{E}\left[\sup_{0\leq r \leq t}\|\varpi^n (r\wedge \tau_k)\|_{H}^2\right]e^{-C_{\mathbf{c}_0,\mu,k} t}\notag\\
			&+ C_{\varepsilon_0}\mathbb{E}\left[\int_0^{t\wedge \tau_k} \int_{\mathcal{Z}} \|G(u^n(s-),z)-G(u(s-),z)\|_{H}^2\nu(dz)ds\right]e^{C_{\mathbf{c}_0,\mu,k} t}\notag \\
				&\leq \frac{1}{4}\mathbb{E}\left[\sup_{0\leq r \leq t}\|\varpi^n (r\wedge \tau_k)\|_{H}^2\right]e^{-C_{\mathbf{c}_0,\mu,k} t}+ C_{\varepsilon_0}L_{k}e^{C_{\mathbf{c}_0,\mu,k} t}\int_0^{t} \mathbb{E}\left[\sup_{0\leq r \leq s}\|\varpi^n (r\wedge \tau_k)\|_{H}^2\right]ds.	
	\end{align}
	
	Therefore, combining with \eqref{2DHD-equ-Fe5.9}-\eqref{2DHD-equ-Fe5.11}, $\mathbf{(C.3)}$, and applying Gronwall's lemma, we can infer that there exists $C_k=C(\mathbf{c}_0,\mu,\varepsilon_0,k)>0$ such that for all $n\in \mathbb{N}$,
	 \begin{align}\label{2DHD-equ-Fe5.12}
	 	\mathbb{E}\left[\sup_{0\leq t \leq t_0}\|\varpi^n (t\wedge \tau_k)\|_{H}^2\right]\leq C_k\|\varpi^n_0\|_{H}^2.
	 \end{align}
	
	\textbf{Step 2.} We prove that the family $\{\mathscr{L}(u(t_0,0,u^n_0)): n\in \mathbb{N}\}$ of the distributions of solutions is tight in $H$.
It suffices to show that, for every $t \in [0, t_0]$, $u(t, 0, u_0^n)$ converges in probability to $u(t, 0, u_0)$. For $R>0$, we can obtain
	\begin{align}\label{5.11}
		\begin{split}
			&\mathbb{P}\left(\|u(t,0, u_0^n)-u(t, 0, u_0)\|_H>R\right)
			\leq  \mathbb{P}\left(\sup_{r\in [0,t]}\|u(r,0, u_0^n)-u(r, 0, u_0)\|_H>R\right)\\
			&\leq  \mathbb{P}\left(\left\{\sup_{r\in [0,t]}\|u(r\wedge \tau_k,0, u_0^n)-u(r\wedge \tau_k, 0, u_0)\|_H>R\right\}\right)+\mathbb{P}\left(\tau_k<t\right).
		\end{split}
	\end{align}
	
	We deal with the terms on the right-hand side of \eqref{5.11}. By \eqref{2DHD-eq3.0} we obtain that there exists a constant $C>0$ such that for all $n\in \mathbb{N}$,
	\begin{align}\label{5.--0013}
			\mathbb{E}\left[\sup_{0\leq t\leq t_0}{\left\| {u(t,0,{u_0^n})} \right\|_{H}^2} \right]+\mathbb{E}\left[\sup_{0\leq t\leq t_0}{\left\| {u(t,0,{u_0})} \right\|_{H}^2} \right]+\mathbb{E} \left[\int_0^{t_0}  {\left\| {u(s,0,{u_0})} \right\|_{V}^2} ds\right]\leq C.
	\end{align} 
	which together with \eqref{2DHD-equ-Fe5.12} and Chebyshev's inequality can derive
	\begin{align}\label{5.0013}
		\begin{split}
			&~\mathbb{P}\left(\left\{\sup_{r\in [0,t]}\|u(r\wedge \tau_k,0, u_0^n)-u(r\wedge \tau_k, 0, u_0)\|_H>R\right\}\right)\\
			&\leq \frac{1}{R^2}\mathbb{E}\left[\sup_{r\in [0,t]}\|u(r\wedge \tau_k,0, u_0^n)-u(r\wedge \tau_k, 0, u_0)\|_H^2\right]\\
			&\leq \frac{1}{R^2}\mathbb{E}\left[\sup_{t\in [0,t_0]}\|u(t\wedge \tau_k,0, u_0^n)-u(t\wedge \tau_k, 0, u_0)\|_H^2\right]\leq \frac{C_k}{R^2}\|u_0^n-u_0\|_{H}^2,
		\end{split}
	\end{align}
	and
	\begin{align}\label{5.13}
		\begin{split}
			&~~\mathbb{P}\left(\tau_k<t\right)\leq \mathbb{P}\left(\sup_{r\in [0,t]}\int_0^r \|u(s,0,u_0)\|_V^2ds>k^2\right)\\
			&+\mathbb{P}\left(\sup_{r\in [0,t]}\|u(r,0,u^n_0)\|_{H}^2>k\right)+\mathbb{P}\left(\sup_{r\in [0,t]}\|u(r,0,u_0)\|_{H}^2>k\right)\\
			&\leq \frac{1}{k^2}\mathbb{E}\left[\int_0^{t_0} \|u(s,0,u_0)\|_V^2ds\right]+\frac{1}{k}\mathbb{E}\left[\sup_{t\in [0,t_0]}\left(\|u(t,0,u^n_0)\|_{H}^2+\|u(t,0,u_0)\|_{H}^2\right)\right]\\
			&\leq \frac{C}{k^2}+\frac{C}{k}.
		\end{split}
	\end{align}
	By \eqref{5.11}-\eqref{5.13}, we obtain
	\begin{align}\label{5.14}
		\begin{split}
			\mathbb{P}\left(\|u(t,0, u_0^n)-u(t, 0, u_0)\|_H^2>R\right)
			\leq \frac{C_k}{R}\|u_0^n-u_0\|_{H}^2+\frac{C}{k^2}+\frac{C}{k}.
		\end{split}
	\end{align}
%	 Since
%	$u_{0}^n \to u_0$ as $n\rightarrow \infty$, which along with Vitali's theorem implies that $u_{0}^n \to u_0$ in   $L^2(\Omega,\mathscr{F}_0; H)$. Passing $n\rightarrow \infty$ and then $k\rightarrow +\infty$ in \eqref{5.14}, we conclude that for any $t\in [0,t_0]$,
Since $u_{0}^n \to u_0$ as $n\rightarrow \infty$, we can first pass to the limit as $n\rightarrow \infty$ and subsequently let $k\rightarrow +\infty$ in \eqref{5.14}. This yields that, for any $t\in [0,t_0]$,
	$$
	\mathbb{P}\left(\|u(t,0, u_0^n)-u(t, 0, u_0)\|_H^2>R\right)=0.
	$$
 
	\textbf{Step 3.}  For $\phi\in C_b(H)$, we shall prove the convergence of the following expression:
	\begin{align}\label{lemma5.4(5.35)}
		\lim_{n\rightarrow\infty} \mathbb{ E}\left[\phi(u(t_0,0,u^n_0))\right]=\mathbb{ E}\left[\phi(u(t_0,0,u_0))\right].
	\end{align}
	By \eqref{5.--0013}, similar to \eqref{5.13}, we obtain that for any $\epsilon>0$, there exists $R_0:=R_0(\epsilon)>0$ such that  
	\begin{align}\label{2DHD-equ-Fe5.18}
		\mathbb{P}\left(\sup_{t\in [0,t_0]}\|u(t,0,u^n_0)\|_{H}>R_0\right)<\frac{\epsilon}{5},\quad \text{for all } n\in \mathbb{N},
	\end{align}
	and
	\begin{align}\label{2DHD-equ-Fe5.19}
		\mathbb{P}\left(\int_0^{t_0} \|u(s,0,u_0)\|_V^2ds>R_0^2\right)<\frac{\epsilon}{5},\quad  \mathbb{P}\left(\sup_{t\in [0,t_0]}\|u(t,0,u_0)\|_{H}>R_0\right)<\frac{\epsilon}{5}.
	\end{align}
	From \textbf{Step 2}, we know that there exists a compact subset $\mathfrak{K}^{\epsilon}$ of $H$ such that for all $n\in \mathbb{N}$,
	\begin{align}\label{2DHD-equ-Fe5.20}
		\mathbb{P}(u(t_0,0,u^n_0)\in \mathfrak{K}^{\epsilon})>1-\frac{\epsilon}{5} \text{~~and~~}
		\mathbb{P}(u(t_0,0,u_0)\in \mathfrak{K}^{\epsilon})>1-\frac{\epsilon}{5}.
	\end{align}  
	Since $\psi:H\rightarrow \mathbb{R}$ is continuous, it is easy to find that $\psi$ is uniformly continuous in $\mathfrak{K}^{\epsilon}$. Then, there exists $\widehat{\delta}>0$ such that, for all $v_1, v_2\in \mathfrak{K}^{\epsilon}$ with $\|v_1-v_2\|<\widehat{\delta}$, there holds
	\begin{align}\label{lemma5.4(5.37)}
		\left|\phi(v_1)-\phi(v_2)\right|<\frac{\epsilon}{2}.
	\end{align}

	Given $n\in \mathbb{N}$, we denote
	\begin{align}\label{2DHD-equ-Fe5.21}
		\begin{split}
			&\widetilde{\Omega}_{n}^{\epsilon}=\left\{\omega \in \Omega: u(t_0,0,u^n_0)\in \mathfrak{K}^{\epsilon} \text{~~and~} \sup_{t\in [0,t_0]}\|u(t,0,u^n_0)\|_{H}\leq R_0\right\},\\
			&\widetilde{\Omega}_{0}^{\epsilon}=\bigg\{\omega \in \Omega: u(t_0,0,u_0)\in \mathfrak{K}^{\epsilon} \text{~~and~} \sup_{t\in [0,t_0]}\|u(t,0,u_0)\|_{H}\leq R_0, \\
			&\qquad ~~~ \text{~~and~} \int_0^{t_0} \|u(s,0,u_0)\|_V^2ds\leq R_0^2\bigg\}.
		\end{split}
	\end{align}
	Let $\Omega_n^{\epsilon} =\widetilde{\Omega}_{n}^{\epsilon}\cap \widetilde{\Omega}_{0}^{\epsilon}$. Then, by \eqref{2DHD-equ-Fe5.18}-\eqref{2DHD-equ-Fe5.20} we obtain that for all $n\in \mathbb{N}$,
	\begin{align}\label{2DHD-equ-Fe5.24}
		\mathbb{P}(\Omega\backslash \Omega_n^{\epsilon})\leq \mathbb{P}(\Omega\backslash \widetilde{\Omega}_{n}^{\epsilon})+\mathbb{P}(\Omega\backslash \widetilde{\Omega}_{0}^{\epsilon})<\epsilon.
	\end{align}
	Since $\phi$ is bounded, there exists $\widehat{\textbf{c}}:=\widehat{\textbf{c}}(\phi)>0$ such that
		$|\phi(v)|\leq \widehat{\textbf{c}}$ for all $v\in H$.
	Thus, for all $n\in \mathbb{N}$, we have
	\begin{align}\label{2DHD-equ-Fe5.25}
		\int_{\Omega\backslash \Omega_n^{\epsilon}} \left|\phi(u(t_0,0,u^n_0))-\phi(u(t_0,0,u_0))\right|d\mathbb{P}\leq 2\widehat{\textbf{c}}\epsilon.
	\end{align}	
	By \eqref{lemma5.4(5.37)} we obtain that for all $n\in \mathbb{N}$,
		\begin{align}\label{lemma5.4(5.48)}
			\begin{split}
				&~~\int_{\Omega_{n}^{\epsilon}} \left|\phi(u(t_0,0,u^n_0))-\phi(u(t_0,0,u_0))\right|d\mathbb{P}\\
				&\leq  \int_{\Omega_{n}^{\epsilon}\cap \{\omega\in \Omega:\|u(t_0,0,u^n_0)-u(t_0,0,u_0)\|_{H}\geq \widehat{\delta}\}} \left|\phi(u(t_0,0,u^n_0))-\phi(u(t_0,0,u_0))\right|d\mathbb{P}\\
				&+ \int_{\Omega_{n}^{\epsilon}\cap \{\omega\in \Omega:\|u(t_0,0,u^n_0)-u(t_0,0,u_0)\|_{H}<\widehat{\delta}\}} \left|\phi(u(t_0,0,u^n_0))-\phi(u(t_0,0,u_0))\right|d\mathbb{P}\\
				&\leq \int_{\Omega_{n}^{\epsilon}\cap \{\omega\in \Omega:\|u(t_0,0,u^n_0)-u(t_0,0,u_0)\|_{H}\geq \widehat{\delta}\}} \left|\phi(u(t_0,0,u^n_0))-\phi(u(t_0,0,u_0))\right|d\mathbb{P}+\frac{\epsilon}{2}.
			\end{split}
		\end{align}
	According to the definition of the stopping time $\tau_k$ in {\textbf{Step 1}}, we have $\tau_k(\omega)=t_0$ for all $\omega \in \Omega_{n}^{\epsilon}$. Hence, we can obtain from \eqref{2DHD-equ-Fe5.12} that
	\begin{align*}
		&~\mathbb{P}\left(\left\{\omega\in \Omega_{n}^{\epsilon}: \|u(t_0,0,u^n_0)-u(t_0,0,u_0)\|_{H}\geq \widehat{\delta}\right\}\right)
		\\
		&=\mathbb{P}\left(\left\{\omega\in \Omega_{n}^{\epsilon}: \|u(t_0\wedge \tau_k,0,u^n_0)-u(t_0\wedge \tau_k,0,u_0)\|_{H}\geq \widehat{\delta}\right\}\right)\\
		&\leq \mathbb{P}\left(\left\{\omega\in \Omega: \|u(t_0\wedge \tau_k,0,u^n_0)-u(t_0\wedge \tau_k,0,u_0)\|_{H}\geq \widehat{\delta}\right\}\right)\\
		&\leq \frac{1}{\widehat{\delta}^2}\mathbb{E}\left[\|u(t_0\wedge \tau_k,0,u^n_0)-u(t_0\wedge \tau_k,0,u_0)\|_{H}^2\right] \leq \frac{C_k}{\widehat{\delta}^2}\mathbb{E}\left[\|u^n_0-u_0\|_{H}^2\right].
	\end{align*}
   Since
   $u_{0}^n \to u_0$ in $H$ as $n\rightarrow \infty$, there exists $N\in \mathbb{N}$ such that for all $n\geq N$,
   \begin{align*}
   	\mathbb{P}\left(\left\{\omega\in \Omega_{n}^{\epsilon}: \|u(t_0,0,u^n_0)-u(t_0,0,u_0)\|_{H}\geq \widehat{\delta}\right\}\right)\leq \frac{\epsilon}{2},
   \end{align*}
   which along with \eqref{lemma5.4(5.48)} yields
   \begin{align}\label{2DHD-equ-Fe5.27}
   	\int_{\Omega_{n}^{\epsilon}} \left|\phi(u(t_0,0,u^n_0))-\phi(u(t_0,0,u_0))\right|d\mathbb{P}\leq \epsilon.
   \end{align}
   It follows from \eqref{2DHD-equ-Fe5.25} and \eqref{2DHD-equ-Fe5.27} that
   \begin{align*}
   	\limsup_{n\rightarrow \infty}\int_{\Omega} \left|\phi(u(t_0,0,u^n_0))-\phi(u(t_0,0,u_0))\right|d\mathbb{P}\leq \epsilon+2\widehat{\textbf{c}}\epsilon.
   \end{align*}
   Thanks to the arbitrariness of $\epsilon$, we have
   $$
   \limsup_{n\rightarrow \infty}\int_{\Omega} \left|\phi(u(t_0,0,u^n_0))-\phi(u(t_0,0,u_0))\right|d\mathbb{P}\leq 0,
   $$
   which implies \eqref{lemma5.4(5.35)}. This completes the proof.
\end{proof}

\begin{remark}\label{rem5.3sdfsdf}
	 The inequality \eqref{2DHD-equ-Fe5.12} can also be obtained through alternative a priori estimation techniques. Specifically, by introducing a weighting function $	\mathfrak{F}(t):=e^{-C_{\mathbf{c}_0,\mu} \int_0^{t} \|u (s)\|_{V}^2ds}$, $\forall t\in [0,t_0]$, here $C_{\mathbf{c}_0,\mu}$ is the same number as \eqref{2DHD-equ-Fe5.8}, and applying It\^{o}’s formula to the process $\mathfrak{F}(t)\|\varpi^n (t)\|_{H}^2$, one can derive the desired estimate. This approach shares some similarities with the estimation presented later in Lemma \ref{2DHD-lemma5.5}.
\end{remark}

\begin{theorem}\label{2DHD-the5.3}
		Under the hypotheses of Lemma $\ref{2DHD-lem5.1}$. There exists $
		\varepsilon_0=\min \left\{1,\sqrt{\frac{\mu \lambda_1}{2L_g}}\right\}$ such that for any $\varepsilon_1, \varepsilon_2 \in (0,\varepsilon_0]$, system  \eqref{2DHD-1.01} with initial value $u(0)=u_{0}$ has an invariant measure $\tilde{\mu}$ on $H$, that is to say, there exists a probability measure $\tilde{\mu}$ in $H$ such that for any $\phi\in C_b(H)$ and $t\geq 0$,
		\begin{equation}\label{mfdeqim4.016}
			\int_{H}\left(\int_{H}\phi(x)p(0,y;t,dx)\right)d\tilde{\mu}(y)=\int_{H} \phi(y)d\tilde{\mu}(y).
		\end{equation}
\end{theorem}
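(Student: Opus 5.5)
The plan is the Krylov--Bogolyubov procedure, using the Feller property of Lemma \ref{2DHD-lem5.2} and the moment bounds of Lemma \ref{2DHD-lem5.1}. Fix the deterministic initial datum $u_0=0$ (any fixed $u_0\in H$ works). For $T>0$ define the time-averaged measures
$$
\mu_T(A):=\frac{1}{T}\int_0^T p(0,0;t,A)\,dt,\qquad A\in\mathcal{B}(H).
$$
Each $\mu_T$ is a probability measure on $H$; measurability of $t\mapsto p(0,0;t,A)$ follows from the c\`adl\`ag paths. The goal is to show that $\{\mu_T\}_{T\geq1}$ is tight on $H$, extract a weakly convergent sequence $\mu_{T_n}\rightharpoonup\tilde\mu$, and verify \eqref{mfdeqim4.016}.

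\textbf{Tightness.} By Chebyshev's inequality and \eqref{2DHD-equation5.2} with $u_0=0$,
$$
\mu_T\big(\{x\in H:\|x\|_V>R\}\big)\leq \frac{1}{TR^2}\int_0^T\mathbb{E}\big[\|u(t,0,0)\|_V^2\big]dt\leq \frac{\rho_1}{R^2}.
$$
This bound is uniform in $T$. Since $V$ embeds compactly in $H$, the closed $V$-ball $\{\|x\|_V\leq R\}$ is compact in $H$: it is bounded in $V$, hence relatively compact in $H$, and it is closed in $H$ by weak lower semicontinuity of the $V$-norm. Hence $\{\mu_T\}$ is tight. Prokhorov's theorem then gives $T_n\to\infty$ and $\tilde\mu\in\mathcal{P}(H)$ with $\mu_{T_n}\rightharpoonup\tilde\mu$.

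\textbf{Invariance.} Fix $t>0$ and $\phi\in C_b(H)$. By Lemma \ref{2DHD-lem5.2}, $p_t\phi\in C_b(H)$, so weak convergence gives $\int p_t\phi\,d\mu_{T_n}\to\int p_t\phi\,d\tilde\mu$. By the Markov (Chapman--Kolmogorov) property $p_s p_t=p_{s+t}$,
$$
\int_H p_t\phi\,d\mu_{T}=\frac1T\int_0^T p_{s+t}\phi(0)\,ds=\frac1T\int_t^{T+t}p_s\phi(0)\,ds.
$$
This differs from $\int\phi\,d\mu_T$ by at most $2t\|\phi\|_\infty/T$, which tends to $0$. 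Passing to the limit along $T_n$ yields \eqref{mfdeqim4.016}.

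\textbf{Main obstacle: the Markov property.} I expect the main gap to be justifying $p_sp_t=p_{s+t}$ for the autonomous Lévy-driven equation. I would derive it from pathwise uniqueness (Theorem \ref{2DHD-the3.3}) together with the flow property $u(s+t,0,u_0)=u(s+t,s,u(s,0,u_0))$. Two further facts are needed.

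\begin{itemize}
\item Time-homogeneity: the law of $u(s+t,s,x)$ equals that of $u(t,0,x)$. This holds because the shifted Wiener process and the shifted Poisson random measure have the same laws, and by uniqueness in law.
\item Independence of $u(s+t,s,x)$ from $\mathscr{F}_s$, since it is driven only by increments after time $s$.
\end{itemize}

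One then conditions on $\mathscr{F}_s$ and uses the freezing lemma, which requires $x\mapsto u(s+t,s,x)$ to be jointly measurable. Measurability of $x\mapsto p_t\phi(x)$ follows from its continuity (Lemma \ref{2DHD-lem5.2}). Finally, the noise-intensity threshold $\varepsilon_0$ must be the one for which Lemma \ref{2DHD-lem5.1} and Lemma \ref{2DHD-lem5.2} hold simultaneously.
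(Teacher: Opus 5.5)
Your proposal is correct and follows essentially the same route as the paper: Krylov--Bogolyubov time averages of the transition probabilities, and tightness via Chebyshev, the $V$-moment bound \eqref{2DHD-equation5.2} and the compact embedding $V\hookrightarrow H$. Invariance then comes from the Feller property of Lemma \ref{2DHD-lem5.2} together with the Markov (Chapman--Kolmogorov) property and time-homogeneity, which the paper also takes from the standard Da Prato--Zabczyk argument; your explicit $2t\|\phi\|_\infty/T$ estimate is just the step the paper delegates to a reference, and fixing $u_0=0$ rather than a general $u_0$ changes nothing.
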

\begin{proof}
	We know from Lemma \ref{2DHD-lem5.2} that the transition semigroup  $\{p_{s,t}\}_{0\leq s \leq r\leq t}$ is Feller. Using the standard argument of \cite[Theorem 9.14]{DaPrato2014}, it is easy to prove that the Markov property of the solution $u(t,s,u_0)$ of system \eqref{2DHD-1.01} with initial value $u(0)=u_{0}$; that is, for any $0\leq s \leq r\leq t \leq T$, it holds
	$\mathbb{E}\left[\phi(u(t,s,u_0))|\mathscr{F}_r\right]=(p_{r,t}\phi)(u(r,s,u_0))$ $\mathbb{P}$-a.s.,
	which implies that for any $\phi\in \mathcal{B}(H)$, $u_0 \in H$ and $0\leq s\leq r\leq t\leq T $, $(p_{s,t}\phi)(u_0)=(p_{s,r}(p_{r,t}\phi))(u_0)$,
	and for every Borel set $A$, there is the following Chapman-Kolmogorov equation
	\begin{equation*}%\label{mfdeqim4.17}
		p(s,u_0;t,A)=\int_{H}p(s,u_0;r,dy)p(r,y;t,A),\quad \forall u_0 \in H,\, 0\leq s\leq r\leq t\leq T.
	\end{equation*}
	In addition, the construction of the solution of system \eqref{2DHD-1.01}, we find that the transition semigroup is homogeneous; that is, for any $u_0 \in H$ and $0\leq s \leq r\leq t \leq T$, $p(s,u_0;t,\cdot)=p(0,u_0;t-s,\cdot)$. 
	
	 It remains to prove the sequence of probability measures $\tilde{\mu}_{t,u_0}$ is tight in $H$, where $\tilde{\mu}_{t,u_0}(A):=\frac{1}{t}\int_{0}^{t} p(0,u_0;s,A)$. By Chebychev's inequality and Lemma \ref{2DHD-lem5.1}, we can infer that for any $T_0>0$ and $\mathfrak{R}>0$,
	\begin{align*}
		&~\sup_{t\geq T_0}\frac{1}{t}\int_0^t \mathbb{P}\left(\|u(s,0,u_0)\|_V> \mathfrak{R}\right)ds\\
		&\leq \sup_{t\geq T_0}\frac{1}{\mathfrak{R}^2 t}\int_0^t \mathbb{E}\left[\|u(s,0,u_0)\|_V^2\right]ds\leq \sup_{t\geq T_0}\frac{1}{\mathfrak{R}^2}\left(\frac{\mathbb{E}\left[\|u_0\|_{H}^2\right] }{t}+ \rho_1\right)\\
		&\leq \frac{1}{\mathfrak{R}^2 T_0}\left(\mathbb{E}\left[\|u_0\|_{H}^2\right] + T_0\rho_1\right),
	\end{align*}
	which means that for all $t\geq T_0$ and $\delta>0$, there exists $\mathfrak{R}_0:=\mathfrak{R}(u_0,T_0,\delta)>0$ such that for any $\mathfrak{R}\geq \mathfrak{R}_0$,
	\begin{align}\label{2DHD-equation5.8}
		\begin{split}
			\tilde{\mu}_{t,u_0}&=\frac{1}{t}\int_{0}^{t} \mathbb{P}\left(\{\omega\in\Omega: u(s,0,u_0)\in A \}\right)ds\\
			&\geq  \frac{1}{t}\int_{0}^{t} \mathbb{P}\left(\{\omega\in\Omega: \|u(s,0,u_0)\|_{V}\leq  \mathfrak{R}_0\}\right)ds\\
			&\geq 1- \sup_{t\geq T_0}\frac{1}{t}\int_{0}^{t} \mathbb{P}\left(\{\omega\in\Omega: \|u(s,0,u_0)\|_{V}>  \mathfrak{R}_0\}\right)ds> 1-\delta,
		\end{split}
	\end{align}
	where $A$ denotes the ball $\mathbb{B}(0,\mathfrak{R})$ centered at $0$ with radius $\mathfrak{R}$ in $V$. Note that the compactness of $V\hookrightarrow H$, we know \eqref{2DHD-equation5.8} that for any $\delta>0$, there exists a compact set $\mathscr{Z} \in H$ such that $\tilde{\mu}(\mathscr{Z})> 1-\delta$ for all $t\geq T_0$. This shows that the tightness of the sequence of probability measures $\tilde{\mu}_{t,u_0}$ in $H$. 
	Therefore, by Krylov-Bogolyubov’s theorem we know that there exists a sequence $t_n \rightarrow \infty$ as $n\rightarrow \infty$ such that $\tilde{\mu}_{t_n,u_0}\rightarrow \tilde{\mu}$ weakly as $n\rightarrow \infty$. 
	
	To prove \eqref{mfdeqim4.016}, we need to make use of the Chapman-Kolmogorov equation, the homogeneity of transition semigroup, and the weak convergence $\tilde{\mu}_{t_n,u_0}\xrightarrow[]{w} \tilde{\mu}$. Following the proof of \cite[Theorem $8.3$]{WBX2019b}, for any $s,t\geq 0$ and $\phi\in C_b({H})$, one can derive
	\begin{align*}
		\int_{H} \phi  (y)d\tilde \mu (y) \hfill
		&= \int_{H} {\left( {\int_{H} {\phi (x)p(0,y;t,dx)} } \right)} d\tilde \mu (y),
	\end{align*}
	which implies \eqref{mfdeqim4.016}. This completes the proof.
\end{proof}

\subsection{Limiting behavior of invariant measures}
This subsection mainly studies the limiting behavior of invariant measures of system  \eqref{2DHD-1.01} driven by L\'{e}vy noise with initial value $u(0)=u_{0}$ as the noise intensities $(\varepsilon_{1,n}, \varepsilon_{2,n})\in (0,\varepsilon_0]^2\subset (0,1]^2$ tends to $(\widehat{\varepsilon}_1,\widehat{\varepsilon}_2)\in [0,\varepsilon_0]^2\subset  [0,1]^2$. To this end, we denote by $u^{\varepsilon_{1}, \varepsilon_{2}}(t,0,u_0)$ the solution of \eqref{2DHD-1.01} with initial value $u(0)=u_{0}$,  and $u^{\widehat{\varepsilon}_1,\widehat{\varepsilon}_2}(t,0,u_0)$ the solution of the limiting system (as $(\varepsilon_1, \varepsilon_2) \to (\widehat{\varepsilon}_1,\widehat{\varepsilon}_2)$) of \eqref{2DHD-1.01}, with the same initial value $u(0)=u_{0}$.

In what follows, we derive the convergence in probability of solutions of \eqref{2DHD-1.01} with respect to the Gaussian noise intensity $\varepsilon_{1}$ and L\'{e}vy noise intensity $\varepsilon_{2}$. 

%To this end, we denote by $u^{\widehat{\varepsilon}_1,\widehat{\varepsilon}_2}$ the solution of the following limiting system:to denote the solutions of \eqref{2DHD-1.01} with initial value $u(0)=u_{0}$ and the limiting system of \eqref{2DHD-1.01} with initial value $u(0)=u_{0}$, respectively.
%\begin{align}\label{2DHD-1.01-lim}
%	\begin{split}
%		&d u^{\widehat{\varepsilon}_1,\widehat{\varepsilon}_2}(t)+\mu \mathcal{A} u^{\widehat{\varepsilon}_1,\widehat{\varepsilon}_2}(t)d t+B(u^{\widehat{\varepsilon}_1,\widehat{\varepsilon}_2}(t),u^{\widehat{\varepsilon}_1,\widehat{\varepsilon}_2}(t))d t \\
%		&= f(x)dt+ \varepsilon_1 h(u^{\widehat{\varepsilon}_1,\widehat{\varepsilon}_2}(t))d W(t)+\varepsilon_2\int_{\mathcal{Z}}G(u^{\widehat{\varepsilon}_1,\widehat{\varepsilon}_2}(t-),z)\widetilde{N}(dt,dz)
%	\end{split}
%\end{align}
%with the initial value $u^{\widehat{\varepsilon}_1,\widehat{\varepsilon}_2}(0)=u_0$. 
%Note that the conclusions in Section \ref{2DHD-EUS} remains valid for $(\widehat{\varepsilon}_1,\widehat{\varepsilon}_2)\in [0,\varepsilon_0]^2 \subseteq [0,1]^2$.

\begin{lemma}\label{2DHD-lemma5.5}
	Suppose that the hypothesis of Lemma $\ref{2DHD-lem5.1}$ hold. Then, there exists $
	\varepsilon_0>0$ such that for any bounded subset $\mathfrak{E}\subset H$, $ T>0$,  $\varkappa>0$, $(\varepsilon_{1}, \varepsilon_{2}) \in (0,\varepsilon_0]^2$ and $(\widehat{\varepsilon}_1,\widehat{\varepsilon}_2)\in [0,\varepsilon_0]^2$, 
	\begin{equation*}
		\lim _{(\varepsilon_{1}, \varepsilon_{2}) \rightarrow (\widehat{\varepsilon}_1,\widehat{\varepsilon}_2)} \sup _{u_0\in \mathfrak{E}} \mathbb{P}\left( \left\{ \omega\in\Omega:   \sup_{0\leq t\leq T}\left\|u^{\varepsilon_{1}, \varepsilon_{2}}(t,0,u_0)-u^{\widehat{\varepsilon}_1,\widehat{\varepsilon}_2}(t,0,u_0)\right\|_{H} \geq\varkappa      \right\} \right)=0.
	\end{equation*}
\end{lemma}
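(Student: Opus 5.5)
Write $u^{\varepsilon}(t):=u^{\varepsilon_1,\varepsilon_2}(t,0,u_0)$, $\widehat u(t):=u^{\widehat\varepsilon_1,\widehat\varepsilon_2}(t,0,u_0)$ and $w:=u^{\varepsilon}-\widehat u$, so that $w(0)=0$. The difference satisfies
\begin{align*}
dw+\mu\mathcal{A}w\,dt+\big(B(u^{\varepsilon},u^{\varepsilon})-B(\widehat u,\widehat u)\big)dt
&=\big(\varepsilon_1h(u^{\varepsilon})-\widehat\varepsilon_1h(\widehat u)\big)dW\\
&\quad+\int_{\mathcal{Z}}\big(\varepsilon_2G(u^{\varepsilon}(t-),z)-\widehat\varepsilon_2G(\widehat u(t-),z)\big)\widetilde N(dt,dz).
\end{align*}
The plan follows the stopping-time strategy of Lemma \ref{2DHD-lem5.2}, with the weight of Remark \ref{rem5.3sdfsdf} to control the bilinear term. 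The estimates must be uniform in $u_0\in\mathfrak{E}$ and in $(\varepsilon_1,\varepsilon_2)$.

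\textbf{Localization.} Fix $k$ and set
$$
\sigma_k:=\inf\Big\{t\in[0,T]:\|u^{\varepsilon}(t)\|_H^2>k\text{ or }\|\widehat u(t)\|_H^2>k\text{ or }\int_0^t\|\widehat u(s)\|_V^2ds>k\Big\},
$$
with $\sigma_k=T$ if the set is empty. Estimate \eqref{2DHD-eq3.0} has constant $C_T(1+\|u_0\|_H^2+T\|f\|_{V'}^2)$. This bound is uniform over $u_0\in\mathfrak{E}$ and over all intensities in $[0,\varepsilon_0]$, because the growth constant in $\mathbf{(C.2)}$ enters only through $\varepsilon_i^2L_g\le L_g$. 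Chebyshev's inequality then gives, as in \eqref{5.13},
$$
\sup_{u_0\in\mathfrak{E}}\sup_{\varepsilon}\mathbb{P}(\sigma_k<T)\le C/k\longrightarrow0 \quad\text{as } k\to\infty.
$$

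\textbf{Weighted Itô estimate.} Apply It\^o's formula with jumps to $\mathfrak{F}(t)\|w(t)\|_H^2$, where $\mathfrak{F}(t)=e^{-C_{\mathbf{c}_0,\mu}\int_0^t\|\widehat u(s)\|_V^2ds}$. Write the bilinear difference as $B(w,u^{\varepsilon})+B(\widehat u,w)$ and pair it with $w$; by (B.1) only $\langle B(w,\widehat u),w\rangle$ survives (after swapping roles appropriately). Estimate this term via Remark \ref{Remweee2.3} as in \eqref{2DHD-equ-Fe5.8}; the factor $\|w\|_H^2\|\widehat u\|_V^2$ is absorbed by $\mathfrak{F}'$. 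On $[0,\sigma_k]$ we have $\mathfrak{F}\ge e^{-Ck}$. Split the noise coefficients as
$$
\varepsilon_1h(u^{\varepsilon})-\widehat\varepsilon_1h(\widehat u)=\varepsilon_1\big(h(u^{\varepsilon})-h(\widehat u)\big)+(\varepsilon_1-\widehat\varepsilon_1)h(\widehat u),
$$
and similarly for $G$. The first piece is controlled by $\mathbf{(C.3)}$ with constant $L_{\sqrt k}$ and yields $\int_0^t\|w\|_H^2$ terms. The second is controlled by $\mathbf{(C.2)}$ and is bounded by $|\varepsilon_1-\widehat\varepsilon_1|^2L_g(1+k)$. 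Treat the stochastic integrals with the BDG inequality (Lemma \ref{BDSL}), absorbing half of $\mathbb{E}\sup\|w\|^2$, exactly as in \eqref{2DHD-equ-Fe5.10}--\eqref{2DHD-equ-Fe5.11}. Gronwall's lemma should then give
$$
\mathbb{E}\Big[\sup_{0\le t\le T}\|w(t\wedge\sigma_k)\|_H^2\Big]\le C_{k,T}\big(|\varepsilon_1-\widehat\varepsilon_1|^2+|\varepsilon_2-\widehat\varepsilon_2|^2\big),
$$
with $C_{k,T}$ independent of $u_0\in\mathfrak{E}$ and of the intensities.

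\textbf{Conclusion and main obstacle.} Decompose as in \eqref{5.11}:
$$
\mathbb{P}\big(\sup_{t\le T}\|w\|_H\ge\varkappa\big)\le\varkappa^{-2}C_{k,T}\big(|\varepsilon_1-\widehat\varepsilon_1|^2+|\varepsilon_2-\widehat\varepsilon_2|^2\big)+C/k.
$$
Take the supremum over $\mathfrak{E}$, let $(\varepsilon_1,\varepsilon_2)\to(\widehat\varepsilon_1,\widehat\varepsilon_2)$, and then let $k\to\infty$. I expect the main obstacle to be keeping every constant uniform in $u_0$ and in the intensities, and in particular handling the jump term. The compensator of $\int\|\cdot\|^2N(ds,dz)$ must be bounded through $\nu$ under the stopping time, and the left limits $u(s-)$ stay inside the ball of radius $\sqrt k$ only up to $\sigma_k$, since a jump may exit the ball exactly at $\sigma_k$. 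I would handle this by using the left-limit values in the integrands and noting that the integrand at time $\sigma_k$ is evaluated at $u(\sigma_k-)$, which has norm at most $\sqrt k$. The BDG constant also needs to be independent of $\varepsilon$, which holds because $\varepsilon_i\le1$.
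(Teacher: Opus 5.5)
Your proposal is correct and follows essentially the same route as the paper. You localize with a stopping time whose exceedance probability is controlled uniformly in $u_0\in\mathfrak{E}$ and in the intensities via \eqref{2DHD-eq3.0} and Chebyshev's inequality. You then apply It\^o's formula to the weighted quantity $\mathfrak{F}\|w\|_H^2$ to absorb the bilinear term, split the noise differences into an intensity-difference part controlled by $\mathbf{(C.2)}$ and a locally Lipschitz part controlled by $\mathbf{(C.3)}$, and finish with BDG, Gronwall, removal of the weight on the stopped interval, and the limits $(\varepsilon_1,\varepsilon_2)\to(\widehat\varepsilon_1,\widehat\varepsilon_2)$ followed by $k\to\infty$.

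The differences are cosmetic. The paper builds the weight from $\|u^{\varepsilon_1,\varepsilon_2}\|_V^2$ rather than $\|\widehat u\|_V^2$, and it anchors the noise splitting at the other solution. Both choices work, since $\langle B(w,u^{\varepsilon}),w\rangle=\langle B(w,\widehat u),w\rangle$ by $\mathbf{(B.1)}$. Your explicit handling of left limits at the stopping time is, if anything, more careful than the paper's.
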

\begin{proof}
	For simplicity, we write $u^{\varepsilon_{1}, \varepsilon_{2}}(t):=u^{\varepsilon_{1}, \varepsilon_{2}}(t,0,u_0)$ and $u^{\widehat{\varepsilon}_1,\widehat{\varepsilon}_2}(t):=u^{\widehat{\varepsilon}_1,\widehat{\varepsilon}_2}(t,0,u_0)$. By Theorem \ref{2DHD-the3.3}, we find that for any bounded subset $\mathfrak{E}\subset H$ and any $ T>0$, there exist $
	\varepsilon_0>0$ and $C:=C_{\mathfrak{E},T}>0$ (independent of the noise intensities $\varepsilon_{1}, \varepsilon_{2}, \widehat{\varepsilon}_1,\widehat{\varepsilon}_2$) such that for all $(\varepsilon_{1}, \varepsilon_{2}) \in (0,\varepsilon_0]^2$ and $(\widehat{\varepsilon}_1,\widehat{\varepsilon}_2)\in [0,\varepsilon_0]^2$,  
	\begin{align}\label{2DHD-eq5.30}
		\begin{split}
			&~~\sup _{u_0 \in \mathfrak{E}}\mathbb{ E}\left[\sup_{0\leq t\leq T} \|u^{\varepsilon_{1}, \varepsilon_{2}}(t)\|^{2}_{H}\right]+\sup _{u_0 \in \mathfrak{E}}\mathbb{ E}\left[\sup_{0\leq t\leq T} \|u^{\widehat{\varepsilon}_1,\widehat{\varepsilon}_2}(t)\|_{H}^2\right]
			+\sup _{u_0 \in \mathfrak{E}}\mathbb{E}\left[\int_{0}^{T} \|u^{\varepsilon_{1}, \varepsilon_{2}}(s)\|_{V}^2ds\right]\leq C,
		\end{split}
	\end{align}
	which together with Chebyshev's inequality can deduce that for any $\epsilon>0$, there exists a $\widehat{R}:=\widehat{R}(\epsilon)>0$ such that 
	\begin{align}\label{2DHD-eq5.31}
		\begin{split}
			&\sup _{u_0 \in \mathfrak{E}}\mathbb{P}\left(\left\{\omega\in \Omega:\sup_{0\leq t\leq T}\|u^{\varepsilon_{1}, \varepsilon_{2}}(t)\|_{H}>\widehat{R}\right\}\right)<\frac{\epsilon}{3},\\
			& \sup _{u_0 \in \mathfrak{E}}\mathbb{P}\left(\left\{\omega\in \Omega:\int_{0}^{T} \|u^{\varepsilon_{1}, \varepsilon_{2}}(s)\|_{V}^2ds>\widehat{R}^2\right\}\right)<\frac{\epsilon}{3},\\
			&\sup _{u_0 \in \mathfrak{E}}\mathbb{P}\left(\left\{\omega\in \Omega:\sup_{0\leq t\leq T}\|u^{\widehat{\varepsilon}_1,\widehat{\varepsilon}_2}(t)\|_{H}>\widehat{R}\right\}\right)<\frac{\epsilon}{3}.
		\end{split}
	\end{align} 
	
	Denote
	\begin{align*}
		&\Omega_{\epsilon}=\left\{\omega\in \Omega: \sup_{0\leq t\leq T}\|u^{\varepsilon_{1}, \varepsilon_{2}}(t)\|_{H}\leq \widehat{R}, \text{ and } \sup_{0\leq t\leq T}\|u^{\widehat{\varepsilon}_1,\widehat{\varepsilon}_2}(t)\|_{H}\leq \widehat{R},  \text{ and }  \int_{0}^{T} \|u^{\varepsilon_{1}, \varepsilon_{2}}(s)\|_{V}^2ds\leq \widehat{R}^2\right\}.
	\end{align*}
Then, we can get $\mathbb{P}\left(\Omega \backslash \Omega_\epsilon\right)< \epsilon$. We define the stopping time
	$$
	\tau^{\epsilon}_{\widehat{R}}=\inf \left\{t\geq 0: \|u^{\varepsilon_{1}, \varepsilon_{2}}(t)\|_{H}>\widehat{R}, \text{ or } \|u^{\widehat{\varepsilon}_1,\widehat{\varepsilon}_2}(t)\|_{H}>\widehat{R}, \text{ or } \int_{0}^{T} \|u^{\varepsilon_{1}, \varepsilon_{2}}(s)\|_{V}^2ds>\widehat{R}^2 \right\},
	$$	 
it is obvious that $\tau^{\epsilon}_{\widehat{R}}\geq T$ for any $\omega \in \Omega_{\epsilon}$.
For any $ T>0$ and  $\varkappa>0$, we have
	\begin{align}\label{2DHD-eq5.32}
		\begin{split}
			&~~\mathbb{P}\left( \left\{ \omega\in\Omega:   \sup_{0\leq t\leq T}\left\|u^{\varepsilon_{1}, \varepsilon_{2}}\left(t\right)-u^{\widehat{\varepsilon}_1,\widehat{\varepsilon}_2}\left(t \right)\right\|_{H} \geq\varkappa      \right\} \right)\\
			&\leq \mathbb{P}\left( \left\{ \omega\in\Omega_\epsilon:   \sup_{0\leq t\leq T}\left\|u^{\varepsilon_{1}, \varepsilon_{2}}\left(t\right)-u^{\widehat{\varepsilon}_1,\widehat{\varepsilon}_2}\left(t \right)\right\|_{H} \geq\varkappa      \right\}\cap \left\{\tau^{\epsilon}_{\widehat{R}}\geq t\right\}\ \right)\\
			&+\mathbb{P}\left( \left\{ \omega\in\Omega \backslash \Omega_\epsilon:   \sup_{0\leq t\leq T}\left\|u^{\varepsilon_{1}, \varepsilon_{2}}\left(t\right)-u^{\widehat{\varepsilon}_1,\widehat{\varepsilon}_2}\left(t \right)\right\|_{H} \geq\varkappa      \right\} \cap \left\{\tau^{\epsilon}_{\widehat{R}}<t\right\} \right)\\
			&<\mathbb{P}\left( \left\{ \omega\in\Omega:   \sup_{0\leq t\leq T}\left\|u^{\varepsilon_{1}, \varepsilon_{2}}(t\wedge \tau^{\epsilon}_{\widehat{R}})-u^{\widehat{\varepsilon}_1,\widehat{\varepsilon}_2}(t\wedge \tau^{\epsilon}_{\widehat{R}} )\right\|_{H} \geq\varkappa      \right\} \right)+\epsilon.
		\end{split}
	\end{align}	 
	Next, we prove that 
	for any bounded subset $\mathfrak{E}\subset H$, as $(\varepsilon_{1}, \varepsilon_{2}) \rightarrow (\widehat{\varepsilon}_1,\widehat{\varepsilon}_2)$, for any $u_0\in \mathfrak{E}$, it holds
	\begin{align}\label{2DHD-eq5.33}
		\mathbb{P}\left( \left\{ \omega\in\Omega:   \sup_{0\leq t\leq T}\left\|u^{\varepsilon_{1}, \varepsilon_{2}}(t\wedge \tau^{\epsilon}_{\widehat{R}})-u^{\widehat{\varepsilon}_1,\widehat{\varepsilon}_2}(t\wedge \tau^{\epsilon}_{\widehat{R}} )\right\|_{H} \geq\varkappa      \right\} \right)\rightarrow 0.
	\end{align}
	
	As stated in Remark \ref{rem5.3sdfsdf}, let $	\mathfrak{F}^{\varepsilon_{1}, \varepsilon_{2}}(t):=e^{-\beta \int_0^{t} \|u^{\varepsilon_{1}, \varepsilon_{2}} (s)\|_{V}^2ds}$ for all $t\in [0,T]$ and $u_0 \in \mathfrak{E}$, where $\beta>0$ is an undetermined constant. 
	By \eqref{2DHD-1.01} we have
	\begin{align*}
		&~d\left(u^{\varepsilon_{1}, \varepsilon_{2}}(t)-u^{\widehat{\varepsilon}_1,\widehat{\varepsilon}_2}(t)\right)+\mu \left(\mathcal{A}u^{\varepsilon_{1}, \varepsilon_{2}}(t)-\mathcal{A}u^{\widehat{\varepsilon}_1,\widehat{\varepsilon}_2}(t)\right)dt\\
		&=-\left(B(u^{\varepsilon_{1}, \varepsilon_{2}}(t)-u^{\widehat{\varepsilon}_1,\widehat{\varepsilon}_2}(t),u^{\varepsilon_{1}, \varepsilon_{2}}(t))-B(u^{\varepsilon_{1}, \varepsilon_{2}}(t),u^{\varepsilon_{1}, \varepsilon_{2}}(t)-u^{\widehat{\varepsilon}_1,\widehat{\varepsilon}_2}(t))\right)dt\\
		&+\left(\varepsilon_1-\widehat{\varepsilon}_1\right)h\left(u^{\varepsilon_{1}, \varepsilon_{2}}(t)\right)dW(t)
		+\widehat{\varepsilon}_1\left(h\left(u^{\varepsilon_{1}, \varepsilon_{2}}(t)\right)-h(u^{\widehat{\varepsilon}_1,\widehat{\varepsilon}_2}(t))\right)dW(t)\\
		&+\left(\varepsilon_2-\widehat{\varepsilon}_2\right)\int_{\mathcal{Z}}G(u^{\varepsilon_{1}, \varepsilon_{2}}(t-),z)\widetilde{N}(dt,dz) +\widehat{\varepsilon}_2\int_{\mathcal{Z}}\left(G(u^{\varepsilon_{1}, \varepsilon_{2}}(t-),z)-G(u^{\widehat{\varepsilon}_1,\widehat{\varepsilon}_2}(t-),z)\right)\widetilde{N}(dt,dz).
	\end{align*} 
	Applying It\^{o}'s formula with jump to $\mathfrak{F}^{\varepsilon_{1}, \varepsilon_{2}}(t)\|u^{\varepsilon_{1}, \varepsilon_{2}}(t)-u^{\widehat{\varepsilon}_1,\widehat{\varepsilon}_2}(t)\|_{H}^2$ in the above inequality, we obtain that for all $t\in [0,T]$,
	\begin{align}\label{2DHD-eq5.34}
		&\mathfrak{F}^{\varepsilon_{1}, \varepsilon_{2}}(t\wedge \tau^{\epsilon}_{\widehat{R}})\|u^{\varepsilon_{1}, \varepsilon_{2}}(t\wedge \tau^{\epsilon}_{\widehat{R}})-u^{\widehat{\varepsilon}_1,\widehat{\varepsilon}_2}(t\wedge \tau^{\epsilon}_{\widehat{R}})\|_{H}^2
		+2\mu\int_{0}^{t\wedge \tau^{\epsilon}_{\widehat{R}}}\mathfrak{F}^{\varepsilon_{1}, \varepsilon_{2}}(s)\|u^{\varepsilon_{1}, \varepsilon_{2}}(s)-u^{\widehat{\varepsilon}_1,\widehat{\varepsilon}_2}(s)\|_{V}^2ds\notag \\
		&=-\beta \int_{0}^{t\wedge \tau^{\epsilon}_{\widehat{R}}}\mathfrak{F}^{\varepsilon_{1}, \varepsilon_{2}}(s)\|u^{\varepsilon_{1}, \varepsilon_{2}}(s)\|_{V}^2\|u^{\varepsilon_{1}, \varepsilon_{2}}(s)-u^{\widehat{\varepsilon}_1,\widehat{\varepsilon}_2}(s)\|_{H}^2ds\notag\\
		&-2\int_{0}^{t\wedge \tau^{\epsilon}_{\widehat{R}}}\mathfrak{F}^{\varepsilon_{1}, \varepsilon_{2}}(s) \langle B(u^{\varepsilon_{1}, \varepsilon_{2}}(t)-u^{\widehat{\varepsilon}_1,\widehat{\varepsilon}_2}(t),u^{\varepsilon_{1}, \varepsilon_{2}}(t)),u^{\varepsilon_{1}, \varepsilon_{2}}(s)-u^{\widehat{\varepsilon}_1,\widehat{\varepsilon}_2}(s)\rangle ds\notag\\
		&+\int_{0}^{t\wedge \tau^{\epsilon}_{\widehat{R}}}\mathfrak{F}^{\varepsilon_{1}, \varepsilon_{2}}(s)\left(\left(\varepsilon_1-\widehat{\varepsilon}_1\right)^2\|h\left(u^{\varepsilon_{1}, \varepsilon_{2}}(s)\right)\|_{\mathcal{L}_2(U;H)}^2+\widehat{\varepsilon}_1^2\|h\left(u^{\varepsilon_{1}, \varepsilon_{2}}(s)\right)-h(u^{\widehat{\varepsilon}_1,\widehat{\varepsilon}_2}(s))\|_{\mathcal{L}_2(U;H)}^2\right)ds\notag\\
		&+2\left(\varepsilon_1-\widehat{\varepsilon}_1\right)\int_{0}^{t\wedge \tau^{\epsilon}_{\widehat{R}}}\mathfrak{F}^{\varepsilon_{1}, \varepsilon_{2}}(s)\langle h\left(u^{\varepsilon_{1}, \varepsilon_{2}}(s)\right),u^{\varepsilon_{1}, \varepsilon_{2}}(s)-u^{\widehat{\varepsilon}_1,\widehat{\varepsilon}_2}(s)\rangle dW(s)\notag\\
		&+2\widehat{\varepsilon}_1\int_{0}^{t\wedge \tau^{\epsilon}_{\widehat{R}}}\mathfrak{F}^{\varepsilon_{1}, \varepsilon_{2}}(s)\langle h\left(u^{\varepsilon_{1}, \varepsilon_{2}}(s)\right)-h(u^{\widehat{\varepsilon}_1,\widehat{\varepsilon}_2}(s)),u^{\varepsilon_{1}, \varepsilon_{2}}(s)-u^{\widehat{\varepsilon}_1,\widehat{\varepsilon}_2}(s)\rangle dW(s)\notag\\
		&+\left(\varepsilon_2-\widehat{\varepsilon}_2\right)^2\int_{0}^{t\wedge \tau^{\epsilon}_{\widehat{R}}}\int_{\mathcal{Z}}\mathfrak{F}^{\varepsilon_{1}, \varepsilon_{2}}(s)\|G(u^{\varepsilon_{1}, \varepsilon_{2}}(s-),z)\|_{H}^2 N(ds,dz)\notag\\
		&+\widehat{\varepsilon}_2^2\int_{0}^{t\wedge \tau^{\epsilon}_{\widehat{R}}}\int_{\mathcal{Z}}\mathfrak{F}^{\varepsilon_{1}, \varepsilon_{2}}(s)\|G(u^{\varepsilon_{1}, \varepsilon_{2}}(s-),z)-G(u^{\widehat{\varepsilon}_1,\widehat{\varepsilon}_2}(s-),z)\|_{H}^2 N(ds,dz)\notag\\
		&+2\left(\varepsilon_2-\widehat{\varepsilon}_2\right)\int_{0}^{t\wedge \tau^{\epsilon}_{\widehat{R}}}\int_{\mathcal{Z}}\mathfrak{F}^{\varepsilon_{1}, \varepsilon_{2}}(s)\langle G(u^{\varepsilon_{1}, \varepsilon_{2}}(s-),z),u^{\varepsilon_{1}, \varepsilon_{2}}(s-)-u^{\widehat{\varepsilon}_1,\widehat{\varepsilon}_2}(s-)\rangle\widetilde{N}(ds,dz)\notag\\
		&+2\widehat{\varepsilon}_2\int_{0}^{t\wedge \tau^{\epsilon}_{\widehat{R}}}\int_{\mathcal{Z}}\mathfrak{F}^{\varepsilon_{1}, \varepsilon_{2}}(s)\langle G(u^{\varepsilon_{1}, \varepsilon_{2}}(s-),z)-G(u^{\widehat{\varepsilon}_1,\widehat{\varepsilon}_2}(s-),z),u^{\varepsilon_{1}, \varepsilon_{2}}(s-)-u^{\widehat{\varepsilon}_1,\widehat{\varepsilon}_2}(s-)\rangle\widetilde{N}(ds,dz)\notag\\
		&:=-\beta \int_{0}^{t\wedge \tau^{\epsilon}_{\widehat{R}}}\mathfrak{F}^{\varepsilon_{1}, \varepsilon_{2}}(s)\|u^{\varepsilon_{1}, \varepsilon_{2}}(s)\|_{V}^2\|u^{\varepsilon_{1}, \varepsilon_{2}}(s)-u^{\widehat{\varepsilon}_1,\widehat{\varepsilon}_2}(s)\|_{H}^2ds+ \sum_{i=1}^{8} \mathscr{J}_{i}(t\wedge \tau^{\epsilon}_{\widehat{R}}).
	\end{align}
	
Similar to \eqref{2DHD-equ-Fe5.8}, by Remark \ref{Remweee2.3} we infer that there exists $C_{\mathbf{c}_0,\mu}>0$ such that for all $t\in [0,T]$,
\begin{align}\label{minus-bin}
		\begin{split}
			\left|\mathscr{J}_{1}(t\wedge \tau^{\epsilon}_{\widehat{R}})\right|&\leq \mu \int_0^{t\wedge \tau^{\epsilon}_{\widehat{R}}} \mathfrak{F}^{\varepsilon_{1}, \varepsilon_{2}}(s)\|u^{\varepsilon_{1}, \varepsilon_{2}}(s)-u^{\widehat{\varepsilon}_1,\widehat{\varepsilon}_2}(s)\|_V^2ds \\
			&+ C_{\mathbf{c}_0,\mu} \int_0^{t\wedge \tau^{\epsilon}_{\widehat{R}}} \mathfrak{F}^{\varepsilon_{1}, \varepsilon_{2}}(s)\|u^{\varepsilon_{1}, \varepsilon_{2}} (s)\|_{V}^2\|u^{\varepsilon_{1}, \varepsilon_{2}}(s)-u^{\widehat{\varepsilon}_1,\widehat{\varepsilon}_2}(s)\|_{H}^2ds.
		\end{split}
\end{align}	
We choose $\beta= C_{\mathbf{c}_0,\mu}$, then it holds
\begin{align*}
	&-\beta \int_{0}^{t\wedge \tau^{\epsilon}_{\widehat{R}}}\mathfrak{F}^{\varepsilon_{1}, \varepsilon_{2}}(s)\|u^{\varepsilon_{1}, \varepsilon_{2}}(s)\|_{V}^2\|u^{\varepsilon_{1}, \varepsilon_{2}}(s)-u^{\widehat{\varepsilon}_1,\widehat{\varepsilon}_2}(s)\|_{H}^2ds+\left|\mathscr{J}_{1}(t\wedge \tau^{\epsilon}_{\widehat{R}})\right|\\
	& \leq \mu \int_0^{t\wedge \tau^{\epsilon}_{\widehat{R}}} \mathfrak{F}^{\varepsilon_{1}, \varepsilon_{2}}(s)\|u^{\varepsilon_{1}, \varepsilon_{2}}(s)-u^{\widehat{\varepsilon}_1,\widehat{\varepsilon}_2}(s)\|_V^2ds,
\end{align*}
which along with \eqref{2DHD-eq5.34} can infer that for all $t\in [0,T]$,
\begin{align}\label{2DHD-eq5.35}
	\mathbb{E}\left[\sup_{0\leq r\leq t} \mathfrak{F}^{\varepsilon_{1}, \varepsilon_{2}}(r\wedge \tau^{\epsilon}_{\widehat{R}})\|u^{\varepsilon_{1}, \varepsilon_{2}}(r\wedge \tau^{\epsilon}_{\widehat{R}})-u^{\widehat{\varepsilon}_1,\widehat{\varepsilon}_2}(r\wedge \tau^{\epsilon}_{\widehat{R}})\|_{H}^2\right]
	\leq  \sum_{i=2}^{8}\mathbb{E}\left[\sup_{0\leq r\leq t} \left|\mathscr{J}_{i}(r\wedge \tau^{\epsilon}_{\widehat{R}})\right|\right].
\end{align}

From \eqref{2DHD-eq5.30} and hypotheses $\mathbf{(C.2)}$ and $\mathbf{(C.3)}$, along with $\mathfrak{F}^{\varepsilon_{1}, \varepsilon_{2}}(\cdot)\in (0,1]$, we deduce that for all $t\in [0,T]$,
%Since $\mathfrak{F}^{\varepsilon_{1}, \varepsilon_{2}}(\cdot)\in (0,1]$, by the hypotheses $\mathbf{(C.2)}$, $\mathbf{(C.3)}$, we get that for all $t\in [0,T]$,
\begin{align}\label{2DHD-eq5.36}
	&~~\mathbb{E}\left[\sup_{0\leq r\leq t}\left|\mathscr{J}_{2}(r\wedge \tau^{\epsilon}_{\widehat{R}})\right|\right]\notag\\
	&\leq C_{L_{g},T}|\varepsilon_1-\widehat{\varepsilon}_1|^2 + L_{\widehat{R}}\int_{0}^{t}\mathbb{E}\left[\sup_{0\leq r \leq s} \mathfrak{F}^{\varepsilon_{1}, \varepsilon_{2}}(r\wedge \tau^{\epsilon}_{\widehat{R}})\|u^{\varepsilon_{1}, \varepsilon_{2}}(r\wedge \tau^{\epsilon}_{\widehat{R}})-u^{\widehat{\varepsilon}_1,\widehat{\varepsilon}_2}(r\wedge \tau^{\epsilon}_{\widehat{R}})\|_{H}^2\right]ds,
%	&\leq L_{g}|\varepsilon_1-\widehat{\varepsilon}_1|^2\left(T+\int_{0}^{t\wedge \tau^{\epsilon}_{\widehat{R}}}\|u^{\varepsilon_{1}, \varepsilon_{2}}(s)\|_{H}^2ds\right)+\varepsilon_0^2L_{\widehat{R}}\int_{0}^{t\wedge \tau^{\epsilon}_{\widehat{R}}}\mathfrak{F}^{\varepsilon_{1}, \varepsilon_{2}}(s)\|u^{\varepsilon_{1}, \varepsilon_{2}}(s)-u^{\widehat{\varepsilon}_1,\widehat{\varepsilon}_2}(s)\|_{H}^2ds\notag\\
%	&\leq L_{g}|\varepsilon_1-\widehat{\varepsilon}_1|^2\left(T+\int_{0}^{T}\|u^{\varepsilon_{1}, \varepsilon_{2}}(s)\|_{H}^2ds\right)+L_{\widehat{R}}\int_{0}^{t\wedge \tau^{\epsilon}_{\widehat{R}}}\mathfrak{F}^{\varepsilon_{1}, \varepsilon_{2}}(s)\|u^{\varepsilon_{1}, \varepsilon_{2}}(s)-u^{\widehat{\varepsilon}_1,\widehat{\varepsilon}_2}(s)\|_{H}^2ds,
\end{align}
and
\begin{align}\label{2DHD-eq5.37}
&~~\mathbb{E}\left[\sup_{0\leq r\leq t}\left|\mathscr{J}_{5}(r\wedge \tau^{\epsilon}_{\widehat{R}})\right| \right]+\mathbb{E}\left[\sup_{0\leq r\leq t}\left|\mathscr{J}_{6}(r\wedge \tau^{\epsilon}_{\widehat{R}})\right| \right]\notag\\
&\leq \left|\varepsilon_2-\widehat{\varepsilon}_2\right|^2\mathbb{E}\left[\int_{0}^{t}\mathfrak{F}^{\varepsilon_{1}, \varepsilon_{2}}(s)\int_{\mathcal{Z}}\|G(u^{\varepsilon_{1}, \varepsilon_{2}}(s-),z)\|_{H}^2 \nu(dz) ds\right]\notag\\
&+\widehat{\varepsilon}_0^2 \mathbb{E}\left[\int_{0}^{t\wedge \tau^{\epsilon}_{\widehat{R}}}\mathfrak{F}^{\varepsilon_{1}, \varepsilon_{2}}(s)\int_{\mathcal{Z}}\|G(u^{\varepsilon_{1}, \varepsilon_{2}}(s-),z)-G(u^{\widehat{\varepsilon}_1,\widehat{\varepsilon}_2}(s-),z)\|_{H}^2\nu(dz) ds\right]\notag\\
&\leq C_{L_{g},T}\left|\varepsilon_2-\widehat{\varepsilon}_2\right|^2+L_{\widehat{R}}\int_{0}^{t}\mathbb{E}\left[\sup_{0\leq r \leq s} \mathfrak{F}^{\varepsilon_{1}, \varepsilon_{2}}(r\wedge \tau^{\epsilon}_{\widehat{R}})\|u^{\varepsilon_{1}, \varepsilon_{2}}(r\wedge \tau^{\epsilon}_{\widehat{R}})-u^{\widehat{\varepsilon}_1,\widehat{\varepsilon}_2}(r\wedge \tau^{\epsilon}_{\widehat{R}})\|_{H}^2\right]ds.
\end{align}

Similar to \eqref{2DHD-equ-Fe5.10} and \eqref{2DHD-equ-Fe5.11}, by the BDG inequality and $\mathbf{(C.3)}$, we can obtain that for all $t\in [0,T]$,
\begin{align}\label{2DHD-eq5.38}
	&~~\mathbb{E}\left[\sup_{0\leq r\leq t}\left|\mathscr{J}_{3}(r\wedge \tau^{\epsilon}_{\widehat{R}})\right| \right]+\mathbb{E}\left[\sup_{0\leq r\leq t}\left|\mathscr{J}_{4}(r\wedge \tau^{\epsilon}_{\widehat{R}})\right| \right]\notag\\
	&\leq \frac{1}{4}\mathbb{E}\left[\sup_{0\leq r \leq t}{\mathfrak{F}^{\varepsilon_{1}, \varepsilon_{2}}(r\wedge \tau^{\epsilon}_{\widehat{R}})\|u^{\varepsilon_{1}, \varepsilon_{2}}(r\wedge \tau^{\epsilon}_{\widehat{R}})-u^{\widehat{\varepsilon}_1,\widehat{\varepsilon}_2}(r\wedge \tau^{\epsilon}_{\widehat{R}})\|_{H}^2}\right]\notag\\
	&+C|\varepsilon_1-\widehat{\varepsilon}_1|^2\mathbb{E}\left[\int_{0}^{t\wedge \tau^{\epsilon}_{\widehat{R}}}\mathfrak{F}^{\varepsilon_{1}, \varepsilon_{2}}(s)\|h\left(u^{\varepsilon_{1}, \varepsilon_{2}}(s)\right)\|_{\mathcal{L}_2(U;H)}^2ds\right]\notag\\
	&+C\widehat{\varepsilon}_0^2\mathbb{E}\left[\int_{0}^{t\wedge \tau^{\epsilon}_{\widehat{R}}}\mathfrak{F}^{\varepsilon_{1}, \varepsilon_{2}}(s)\|h\left(u^{\varepsilon_{1}, \varepsilon_{2}}(s)\right)-h(u^{\widehat{\varepsilon}_1,\widehat{\varepsilon}_2}(s))\|_{\mathcal{L}_2(U;H)}^2ds\right]\notag\\
	&\leq \frac{1}{4}\mathbb{E}\left[\sup_{0\leq r \leq t}{\mathfrak{F}^{\varepsilon_{1}, \varepsilon_{2}}(r\wedge \tau^{\epsilon}_{\widehat{R}})\|u^{\varepsilon_{1}, \varepsilon_{2}}(r\wedge \tau^{\epsilon}_{\widehat{R}})-u^{\widehat{\varepsilon}_1,\widehat{\varepsilon}_2}(r\wedge \tau^{\epsilon}_{\widehat{R}})\|_{H}^2}\right]+C_{L_{g},T}|\varepsilon_1-\widehat{\varepsilon}_1|^2\notag\\
	&+C_{L_{\widehat{R}}}\int_{0}^{t}\mathbb{E}\left[\sup_{0\leq r \leq s} \mathfrak{F}^{\varepsilon_{1}, \varepsilon_{2}}(r\wedge \tau^{\epsilon}_{\widehat{R}})\|u^{\varepsilon_{1}, \varepsilon_{2}}(r\wedge \tau^{\epsilon}_{\widehat{R}})-u^{\widehat{\varepsilon}_1,\widehat{\varepsilon}_2}(r\wedge \tau^{\epsilon}_{\widehat{R}})\|_{H}^2\right]ds,
\end{align}
and
\begin{align}\label{2DHD-eq5.39}
	&~~\mathbb{E}\left[\sup_{0\leq r\leq t}\left|\mathscr{J}_{7}(r\wedge \tau^{\epsilon}_{\widehat{R}})\right| \right]+\mathbb{E}\left[\sup_{0\leq r\leq t}\left|\mathscr{J}_{8}(r\wedge \tau^{\epsilon}_{\widehat{R}})\right| \right]\notag\\
	&\leq \frac{1}{4}\mathbb{E}\left[\sup_{0\leq r \leq t}{\mathfrak{F}^{\varepsilon_{1}, \varepsilon_{2}}(r\wedge \tau^{\epsilon}_{\widehat{R}})\|u^{\varepsilon_{1}, \varepsilon_{2}}(r\wedge \tau^{\epsilon}_{\widehat{R}})-u^{\widehat{\varepsilon}_1,\widehat{\varepsilon}_2}(r\wedge \tau^{\epsilon}_{\widehat{R}})\|_{H}^2}\right]\notag\\
	&+C|\varepsilon_2-\widehat{\varepsilon}_2|^2\mathbb{E}\left[\int_{0}^{t\wedge \tau^{\epsilon}_{\widehat{R}}}\int_{\mathcal{Z}}\mathfrak{F}^{\varepsilon_{1}, \varepsilon_{2}}(s)\|G(u^{\varepsilon_{1}, \varepsilon_{2}}(s-),z)\|_{H}^2\nu(dz)ds\right]\notag\\
	&+C\widehat{\varepsilon}_0^2\mathbb{E}\left[\int_{0}^{t\wedge \tau^{\epsilon}_{\widehat{R}}}\int_{\mathcal{Z}}\mathfrak{F}^{\varepsilon_{1}, \varepsilon_{2}}(s)\|G(u^{\varepsilon_{1}, \varepsilon_{2}}(s-),z)-G(u^{\widehat{\varepsilon}_1,\widehat{\varepsilon}_2}(s-),z)\|_{H}^2\nu(dz)ds\right]\notag\\
	&\leq \frac{1}{4}\mathbb{E}\left[\sup_{0\leq r \leq t}{\mathfrak{F}^{\varepsilon_{1}, \varepsilon_{2}}(r\wedge \tau^{\epsilon}_{\widehat{R}})\|u^{\varepsilon_{1}, \varepsilon_{2}}(r\wedge \tau^{\epsilon}_{\widehat{R}})-u^{\widehat{\varepsilon}_1,\widehat{\varepsilon}_2}(r\wedge \tau^{\epsilon}_{\widehat{R}})\|_{H}^2}\right]+C_{L_{g},T}\left|\varepsilon_2-\widehat{\varepsilon}_2\right|^2\notag\\
	&+C_{L_{\widehat{R}}}\int_{0}^{t}\mathbb{E}\left[\sup_{0\leq r \leq s} \mathfrak{F}^{\varepsilon_{1}, \varepsilon_{2}}(r\wedge \tau^{\epsilon}_{\widehat{R}})\|u^{\varepsilon_{1}, \varepsilon_{2}}(r\wedge \tau^{\epsilon}_{\widehat{R}})-u^{\widehat{\varepsilon}_1,\widehat{\varepsilon}_2}(r\wedge \tau^{\epsilon}_{\widehat{R}})\|_{H}^2\right]ds.
\end{align}

Substituting \eqref{2DHD-eq5.36}-\eqref{2DHD-eq5.39} into \eqref{2DHD-eq5.35}, we have
\begin{align}\label{2DHD-eq5.40}
	&~~\frac{1}{2}\mathbb{E}\left[\sup_{0\leq r\leq t} \mathfrak{F}^{\varepsilon_{1}, \varepsilon_{2}}(r\wedge \tau^{\epsilon}_{\widehat{R}})\|u^{\varepsilon_{1}, \varepsilon_{2}}(r\wedge \tau^{\epsilon}_{\widehat{R}})-u^{\widehat{\varepsilon}_1,\widehat{\varepsilon}_2}(r\wedge \tau^{\epsilon}_{\widehat{R}})\|_{H}^2\right]\notag\\
	&\leq C_{L_{g},T}\left(\left|\varepsilon_1-\widehat{\varepsilon}_1\right|^2+\left|\varepsilon_2-\widehat{\varepsilon}_2\right|^2\right)\notag\\
	&+C_{L_{\widehat{R}}}\int_{0}^{t}\mathbb{E}\left[\sup_{0\leq r \leq s} \mathfrak{F}^{\varepsilon_{1}, \varepsilon_{2}}(r\wedge \tau^{\epsilon}_{\widehat{R}})\|u^{\varepsilon_{1}, \varepsilon_{2}}(r\wedge \tau^{\epsilon}_{\widehat{R}})-u^{\widehat{\varepsilon}_1,\widehat{\varepsilon}_2}(r\wedge \tau^{\epsilon}_{\widehat{R}})\|_{H}^2\right]ds.
\end{align}
Using Gronwall's lemma to \eqref{2DHD-eq5.40} yields that for all $t\in [0,T]$,
\begin{align}\label{2DHD-eq5.41}
	\mathbb{E}\left[\sup_{0\leq r\leq t} \mathfrak{F}^{\varepsilon_{1}, \varepsilon_{2}}(r\wedge \tau^{\epsilon}_{\widehat{R}})\|u^{\varepsilon_{1}, \varepsilon_{2}}(r\wedge \tau^{\epsilon}_{\widehat{R}})-u^{\widehat{\varepsilon}_1,\widehat{\varepsilon}_2}(r\wedge \tau^{\epsilon}_{\widehat{R}})\|_{H}^2\right]\leq C_{L_{g},T}e^{C_{L_{\widehat{R}}} t}\left(\left|\varepsilon_1-\widehat{\varepsilon}_1\right|^2+\left|\varepsilon_2-\widehat{\varepsilon}_2\right|^2\right).
\end{align}
Since $t\rightarrow \frac{1}{\mathfrak{F}^{\varepsilon_{1}, \varepsilon_{2}}(t)}$ is increasing, and $\tau^{\epsilon}_{\widehat{R}}\geq T$ for any $\epsilon\in \Omega_{\epsilon}$, we know that for all $u_0\in \mathfrak{E}$ and $t\in [0,T]$,
\begin{align}\label{2DHD-eq5.42}
	\sup_{0\leq t\leq T}\frac{1}{\mathfrak{F}^{\varepsilon_{1}, \varepsilon_{2}}(t\wedge \tau^{\epsilon}_{\widehat{R}})}= e^{\beta \int_0^{T\wedge \tau^{\epsilon}_{\widehat{R}}} \|u^{\varepsilon_{1}, \varepsilon_{2}} (s)\|_{V}^2ds}\leq  e^{\beta \widehat{R}^2}.
\end{align}

It follows from \eqref{2DHD-eq5.41} and \eqref{2DHD-eq5.41} that  for all $u_0\in \mathfrak{E}$,
\begin{align}\label{2DHD-eqsfsf5.42}
		&~~\mathbb{E}\left[\sup_{0\leq t\leq T} \|u^{\varepsilon_{1}, \varepsilon_{2}}(t\wedge \tau^{\epsilon}_{\widehat{R}})-u^{\widehat{\varepsilon}_1,\widehat{\varepsilon}_2}(t\wedge \tau^{\epsilon}_{\widehat{R}})\|_{H}^2\right]\notag\\
		&\leq \mathbb{E}\left[\left(\sup_{0\leq t\leq T} \frac{1}{\mathfrak{F}^{\varepsilon_{1}, \varepsilon_{2}}(t\wedge \tau^{\epsilon}_{\widehat{R}})}\right) \left(\sup_{0\leq t\leq T} \mathfrak{F}^{\varepsilon_{1}, \varepsilon_{2}}(t\wedge \tau^{\epsilon}_{\widehat{R}})\|u^{\varepsilon_{1}, \varepsilon_{2}}(t\wedge \tau^{\epsilon}_{\widehat{R}})-u^{\widehat{\varepsilon}_1,\widehat{\varepsilon}_2}(t\wedge \tau^{\epsilon}_{\widehat{R}})\|_{H}^2\right)\right]\notag\\
		&\leq C_{L_{g},L_{\widehat{R}},\widehat{R},\mathbf{c}_0,\mu,T}\left(\left|\varepsilon_1-\widehat{\varepsilon}_1\right|^2+\left|\varepsilon_2-\widehat{\varepsilon}_2\right|^2\right).
\end{align}
By \eqref{2DHD-eqsfsf5.42} we have
\begin{align*}
	&\sup_{u_0\in \mathfrak{E}} \mathbb{P}\left( \left\{ \omega\in\Omega:   \sup_{0\leq t\leq T}\left\|u^{\varepsilon_{1}, \varepsilon_{2}}(t\wedge \tau^{\epsilon}_{\widehat{R}})-u^{\widehat{\varepsilon}_1,\widehat{\varepsilon}_2}(t\wedge \tau^{\epsilon}_{\widehat{R}} )\right\|_{H} \geq\varkappa      \right\} \right)\\
	&\leq \frac{1}{\varkappa^2}\mathbb{E}\left[\sup_{0\leq t\leq T} \|u^{\varepsilon_{1}, \varepsilon_{2}}(t\wedge \tau^{\epsilon}_{\widehat{R}})-u^{\widehat{\varepsilon}_1,\widehat{\varepsilon}_2}(t\wedge \tau^{\epsilon}_{\widehat{R}})\|_{H}^2\right]\\
	&\leq \frac{C_{L_{g},L_{\widehat{R}},\widehat{R},\mathbf{c}_0,\mu,T}}{\varkappa^2}\left(\left|\varepsilon_1-\widehat{\varepsilon}_1\right|^2+\left|\varepsilon_2-\widehat{\varepsilon}_2\right|^2\right)\rightarrow 0 \text{~as~} (\varepsilon_{1}, \varepsilon_{2}) \rightarrow (\widehat{\varepsilon}_1,\widehat{\varepsilon}_2),
\end{align*}
which implies \eqref{2DHD-eq5.33}.  Therefore, by \eqref{2DHD-eq5.32} and \eqref{2DHD-eq5.33} we can obtain the desired conclusion. This completes the proof.
\end{proof}	

We now investigate the limiting behavior of the invariant measures for system \eqref{2DHD-1.01} driven by L\'{e}vy noise with initial condition $u(0)=u_{0}$ 
as the noise intensities $(\varepsilon_{1,n}, \varepsilon_{2,n})\rightarrow (\widehat{\varepsilon}_1,\widehat{\varepsilon}_2)\in [0,\varepsilon_0]^2\subset  [0,1]^2$.
Let $\mathscr{S}^{\varepsilon_{1},\varepsilon_{2}}$ denote the collection of all invariant measures for this system. By Theorem \ref{2DHD-the5.3}, It is not difficult to find that $\mathscr{S}^{\varepsilon_{1},\varepsilon_{2}}$ is nonempty.
	\begin{theorem}\label{2DHD-the5.5}
		Under the hypotheses of Lemma $\ref{2DHD-lem5.1}$. If the noise intensities $(\varepsilon_{1,n}, \varepsilon_{2,n})\rightarrow (\widehat{\varepsilon}_1,\widehat{\varepsilon}_2)$, and $\tilde{\mu}^{\varepsilon_{1,n}, \varepsilon_{2,n}}\in \mathscr{S}^{\varepsilon_{1,n}, \varepsilon_{2,n}}$, then there exist a subsequence $\{\tilde{\mu}^{\varepsilon_{1,n_m}, \varepsilon_{2,n_m}}\}\subset \{\tilde{\mu}^{\varepsilon_{1,n}, \varepsilon_{2,n}}\}$ and an invariant measure $\tilde{\mu}^{\widehat{\varepsilon}_1,\widehat{\varepsilon}_2}\in \mathscr{S}^{\varepsilon_{1}, \varepsilon_{2}}$ such that $\tilde{\mu}^{\varepsilon_{1,n_m}, \varepsilon_{2,n_m}}\rightarrow \tilde{\mu}^{\widehat{\varepsilon}_1,\widehat{\varepsilon}_2}$ weakly.
	\end{theorem}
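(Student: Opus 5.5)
The plan follows the standard two-step scheme for limits of invariant measures (as in the Wang-type results the paper cites): first show that $\bigcup_{n}\mathscr{S}^{\varepsilon_{1,n},\varepsilon_{2,n}}$ is tight in $H$, then show that any weak limit is invariant for the limiting system.

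\textbf{Step 1: tightness.} Take $\tilde{\mu}^n:=\tilde{\mu}^{\varepsilon_{1,n},\varepsilon_{2,n}}\in\mathscr{S}^{\varepsilon_{1,n},\varepsilon_{2,n}}$. I would first derive a uniform second-moment bound. By invariance and Lemma \ref{2DHD-lem5.1}, \eqref{2DHD-equation5.3} gives, after integrating in $s\in[0,1]$ against $\tilde{\mu}^n$,
$$\frac{\mu}{2}\int_H\int_0^1\mathbb{E}\|u(s,0,y)\|_V^2ds\,\tilde{\mu}^n(dy)\le \frac{2}{\mu}\|f\|_{V'}^2+\frac{\mu\lambda_1}{2},$$
since $\rho_1$ does not depend on $\varepsilon_1,\varepsilon_2$. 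The $\mathbb{E}\|u_0\|^2$ terms cancel by invariance. If $\int\|y\|_H^2d\tilde{\mu}^n=\infty$, I would first apply the estimate to truncated test functions $\min(\|y\|_V^2,M)$ and let $M\to\infty$, using Fubini. Invariance then yields $\int_H\|y\|_V^2\,\tilde{\mu}^n(dy)\le C$ with $C$ independent of $n$. By Chebyshev, $\tilde{\mu}^n(\{\|y\|_V>R\})\le C/R^2$. The compactness of $V\hookrightarrow H$ then gives tightness, and Prokhorov yields a subsequence $\tilde{\mu}^{n_m}\to\tilde{\mu}$ weakly. The same bound also gives $\tilde{\mu}^n(\{\|y\|_H>R\})\le C/(\lambda_1R^2)$, uniformly in $n$.

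\textbf{Step 2: invariance of the limit.} Fix $t>0$ and $\phi$ bounded and Lipschitz; it suffices to treat such $\phi$, since they determine weak convergence. Write
$$\int p^{\widehat{\varepsilon}}_t\phi\,d\tilde{\mu}-\int\phi\,d\tilde{\mu}=\int p^{\widehat{\varepsilon}}_t\phi\,d(\tilde{\mu}-\tilde{\mu}^{n_m})+\int(p^{\widehat{\varepsilon}}_t\phi-p^{n_m}_t\phi)\,d\tilde{\mu}^{n_m}+\int\phi\,d(\tilde{\mu}^{n_m}-\tilde{\mu}),$$
where the invariance of $\tilde{\mu}^{n_m}$ has been used. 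The first term tends to zero because $p^{\widehat{\varepsilon}}_t\phi\in C_b(H)$ by the Feller property (Lemma \ref{2DHD-lem5.2}, which holds for the limiting intensities as well). The third term tends to zero by weak convergence.

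For the middle term, split $H$ into the ball $\mathfrak{E}_R=\{\|y\|_H\le R\}$ and its complement. The complement has $\tilde{\mu}^{n_m}$-mass at most $C/R^2$ uniformly in $m$, contributing at most $2\|\phi\|_\infty C/R^2$. On $\mathfrak{E}_R$,
$$|p^{\widehat{\varepsilon}}_t\phi(y)-p^{n_m}_t\phi(y)|\le \mathrm{Lip}(\phi)\varkappa+2\|\phi\|_\infty\sup_{y\in\mathfrak{E}_R}\mathbb{P}\big(\|u^{n_m}(t,0,y)-u^{\widehat{\varepsilon}}(t,0,y)\|_H\ge\varkappa\big).$$
By Lemma \ref{2DHD-lemma5.5}, applied with $\mathfrak{E}=\mathfrak{E}_R$, the supremum tends to zero as $m\to\infty$. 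Letting $m\to\infty$, then $\varkappa\to0$, then $R\to\infty$ gives \eqref{mfdeqim4.016} for $\tilde{\mu}$ with intensities $(\widehat{\varepsilon}_1,\widehat{\varepsilon}_2)$, so $\tilde{\mu}\in\mathscr{S}^{\widehat{\varepsilon}_1,\widehat{\varepsilon}_2}$.

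\textbf{Main obstacle.} The delicate point is the uniform moment bound in Step 1 for arbitrary invariant measures, which are not a priori known to have finite second moments. I would handle it by running the energy estimate from an initial point $y$ and averaging the bound $\frac{1}{T}\int_0^T\mathbb{E}\min(\|u(s,0,y)\|_V^2,M)ds\le \frac{\|y\|_H^2}{T}+\rho_1$ against $\tilde{\mu}^n$. The initial-data term is unbounded, so I would first restrict $y$ to a large $H$-ball of mass $1-\delta$ and then let $T\to\infty$. This yields $\tilde{\mu}^n(\{\|y\|_V>R\})\le\delta+\rho_1/R^2$, which is enough for tightness without ever needing $\int\|y\|^2d\tilde{\mu}^n<\infty$. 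The Step 2 splitting should then use the tail bound $\tilde{\mu}^{n_m}(\{\|y\|_H>R\})\le \delta+\rho_1/(\lambda_1 R^2)$ in place of $C/R^2$.
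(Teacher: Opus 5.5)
Your proposal is correct and follows the same scheme as the paper: uniform tightness of the invariant measures, Prokhorov's theorem, and then Lemma \ref{2DHD-lemma5.5} together with the Feller property to pass invariance to the limit, where your three-term decomposition simply reproduces the abstract result \cite[Theorem 2.1]{ChenDong2020} that the paper cites. The one substantive difference is Step 1: the paper justifies tightness of $\{\tilde{\mu}^{\varepsilon_{1,n},\varepsilon_{2,n}}\}$ by pointing to Theorem \ref{2DHD-the5.3}, which only treats Krylov--Bogolyubov averages started from a fixed initial datum at fixed intensities, whereas you actually prove it without assuming finite second moments, by averaging \eqref{2DHD-equation5.2} in time against $\tilde{\mu}^n$ restricted to an $n$-dependent $H$-ball; the radius disappears as $T\to\infty$, so $\tilde{\mu}^n(\{\|y\|_V>R\})\le\rho_1/R^2$ holds uniformly in $n$.
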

\begin{proof}
	In Theorem \ref{2DHD-the5.3}, we have proven that the family $\{\tilde{\mu}^{\varepsilon_{1,n}, \varepsilon_{2,n}}\}_{n=1}^\infty$ is tight, and hence there exists a subsequence $\{\tilde{\mu}^{\varepsilon_{1,n_m}, \varepsilon_{2,n_m}}\}_{m=1}^\infty$ such that the convergence $\tilde{\mu}^{\varepsilon_{1,n_m}, \varepsilon_{2,n_m}}\rightarrow \tilde{\mu}^{\widehat{\varepsilon}_1,\widehat{\varepsilon}_2}$ is weakly for some probability measure $\tilde{\mu}^{\tilde{\mu}^{\widehat{\varepsilon}_1,\widehat{\varepsilon}_2}}$. Consequently, as $({\varepsilon_{1,n_m}, \varepsilon_{2,n_m}}) \rightarrow ({\widehat{\varepsilon}_1,\widehat{\varepsilon}_2})$, by Lemma \ref{2DHD-lemma5.5} and \cite[Theorem $2.1$]{ChenDong2020} we can obtain that $\tilde{\mu}^{{\widehat{\varepsilon}_1,\widehat{\varepsilon}_2}}$ is an invariant measure of corresponding limit system, and $\tilde{\mu}^{{\widehat{\varepsilon}_1,\widehat{\varepsilon}_2}} \in \mathscr{S}^{{\widehat{\varepsilon}_1,\widehat{\varepsilon}_2}}$. This proof completes the proof.
	%{mftheim4.3}
\end{proof}

As a direct consequence of Theorems \ref{2DHD-the5.3} and \ref{2DHD-the5.5}, we have the following result.
\begin{theorem}\label{2DHD-the5.7}
	Assume that Hypotheses $\ref{2DHD-Ass2.0}$, $\ref{2DHD-Ass2.1}$, and $\ref{2DHD-Ass2.4}$ hold, and let ${\varepsilon_{1,n}, \varepsilon_{2,n}} \in (0,{\varepsilon}_0]$, ${\widehat{\varepsilon}_1,\widehat{\varepsilon}_2}\in [0,\varepsilon_0]$. If $\tilde{\mu}^{{\varepsilon_{1,n}, \varepsilon_{2,n}}}$ is the sequence of invariant measures of system \eqref{2DHD-1.01} driven by L\'{e}vy noise with initial condition $u(0)=u_{0}$, then $\tilde{\mu}^{\varepsilon_{1,n}, \varepsilon_{2,n}}$  weakly converges to $\tilde{\mu}^{\widehat{\varepsilon}_1,\widehat{\varepsilon}_2}$ as $(\varepsilon_{1,n}, \varepsilon_{2,n})\rightarrow (\widehat{\varepsilon}_1,\widehat{\varepsilon}_2)$.
\end{theorem}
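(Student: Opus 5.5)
Theorem \ref{2DHD-the5.7} is to be read as a consequence of Theorem \ref{2DHD-the5.5}, with convergence understood along subsequences, or of the whole sequence whenever the limit system has a unique invariant measure. The plan has three steps: uniform tightness of the family $\{\tilde{\mu}^{\varepsilon_{1,n},\varepsilon_{2,n}}\}_{n}$ in $H$, identification of every weak limit point as an invariant measure of the limit system, and then passing from subsequences to the full sequence.

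\textbf{Tightness.} Fix $n$ and write $\tilde\mu_n=\tilde{\mu}^{\varepsilon_{1,n},\varepsilon_{2,n}}$. Since $\tilde\mu_n$ is invariant, for every $t>0$ and every truncated test function $\phi_{R}(x)=\min\{\|x\|_V^2,R\|x\|_H^2\cdot 0+R\}$ (made continuous on $H$ via the lower semicontinuity of $\|\cdot\|_V$ on $H$), we have
\[
\int_H \phi_R\,d\tilde\mu_n=\frac1t\int_0^t\int_H \mathbb{E}\,\phi_R(u^{n}(s,0,y))\,d\tilde\mu_n(y)\,ds .
\]
Apply this first with $\|\cdot\|_H^2\wedge R$. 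Estimate \eqref{2DHD-equation5.002} has constants independent of the noise intensities, and $\varepsilon_0$ is uniform, so we obtain $\int (\|x\|_H^2\wedge R)\,d\tilde\mu_n\le \int (e^{-\kappa t}\|y\|_H^2\wedge R)\,d\tilde\mu_n+\rho_1'$. Letting $t\to\infty$ and then $R\to\infty$, by monotone convergence, gives $\sup_n\int\|x\|_H^2\,d\tilde\mu_n\le C$. A cleaner route is to use directly the dissipative differential inequality \eqref{2DHD-equation5.3}, which yields a bound $\rho_1/\kappa$ independent of $t$. Next, \eqref{2DHD-equation5.2} gives
\[
\int(\|x\|_V^2\wedge R)\,d\tilde\mu_n\le \frac1t\Bigl(\int \|y\|_H^2 d\tilde\mu_n\Bigr)+\rho_1\le C/t+\rho_1,
\]
uniformly in $n$ and $R$. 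Hence $\sup_n\tilde\mu_n(\{\|x\|_V>\mathfrak R\})\le C/\mathfrak R^2$. Balls of $V$ are compact in $H$, so the family is tight, and Prokhorov's theorem gives a weakly convergent subsequence $\tilde\mu_{n_m}\to\tilde\mu$. The main technical obstacle is justifying these averaged estimates rigorously with unbounded test functions. I would handle it with the $R$-truncation above combined with Fatou's lemma.

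\textbf{Invariance of the limit.} For $\phi\in C_b(H)$ that is Lipschitz (such functions suffice to determine invariance), I would split
\[
\int p^{n}_t\phi\,d\tilde\mu_{n}-\int p^{\widehat\varepsilon}_t\phi\,d\tilde\mu
=\int (p^{n}_t\phi-p^{\widehat\varepsilon}_t\phi)\,d\tilde\mu_{n}+\Bigl(\int p^{\widehat\varepsilon}_t\phi\,d\tilde\mu_{n}-\int p^{\widehat\varepsilon}_t\phi\,d\tilde\mu\Bigr).
\]
The second term vanishes in the limit because $p^{\widehat\varepsilon}_t\phi\in C_b(H)$ by the Feller property, Lemma \ref{2DHD-lem5.2}. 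This lemma is stated for positive intensities, but its proof works verbatim for $\widehat\varepsilon_i=0$. For the first term, restrict to a bounded set $\mathfrak E=\{\|y\|_H\le R\}$, which carries $\tilde\mu_n$-mass at least $1-C/R^2$ uniformly in $n$. On $\mathfrak E$, Lemma \ref{2DHD-lemma5.5} gives $\sup_{y\in\mathfrak E}|p^n_t\phi(y)-p^{\widehat\varepsilon}_t\phi(y)|\le \mathrm{Lip}(\phi)\varkappa+2\|\phi\|_\infty\,\mathbb P(\cdots\ge\varkappa)\to0$. Combining these with the invariance of $\tilde\mu_n$ and the weak convergence $\int\phi\,d\tilde\mu_n\to\int\phi\,d\tilde\mu$, we conclude that $\int p^{\widehat\varepsilon}_t\phi\,d\tilde\mu=\int\phi\,d\tilde\mu$. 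This is the content of \cite[Theorem 2.1]{ChenDong2020}, so the step can also be cited.

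\textbf{Whole sequence.} If $\mathscr S^{\widehat\varepsilon_1,\widehat\varepsilon_2}$ is a singleton $\{\tilde{\mu}^{\widehat{\varepsilon}_1,\widehat{\varepsilon}_2}\}$, then every subsequence of $\tilde\mu_n$ has a further subsequence converging to this same measure, and therefore the whole sequence converges weakly. This is the precise sense of the stated theorem. In general the conclusion holds along subsequences, as already recorded in Theorem \ref{2DHD-the5.5}.
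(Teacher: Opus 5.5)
Your proposal is correct and has the same skeleton as the paper, which obtains Theorem \ref{2DHD-the5.7} as a direct consequence of Theorems \ref{2DHD-the5.3} and \ref{2DHD-the5.5}. That skeleton is: tightness of $\{\tilde\mu^{\varepsilon_{1,n},\varepsilon_{2,n}}\}_n$, then Prokhorov, then Lemma \ref{2DHD-lemma5.5} together with \cite[Theorem 2.1]{ChenDong2020} to show every limit point is invariant for the limit system. You differ in two places, and in both your version is the more careful one.

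\textbf{Tightness.} The paper refers back to Theorem \ref{2DHD-the5.3}, but that proof only shows tightness of the Krylov--Bogolyubov averages $\tilde\mu_{t,u_0}$ for one fixed pair of intensities. It gives nothing for arbitrary elements of $\mathscr{S}^{\varepsilon_{1,n},\varepsilon_{2,n}}$, uniformly in $n$. Your argument tests invariance against truncated functionals and uses the $\varepsilon$-independent dissipative inequality \eqref{2DHD-equation5.3} with its $t$-independent constant. You are right that the $\rho_1 t$ in \eqref{2DHD-equation5.002} is useless as $t\to\infty$. This is what actually yields $\sup_n\int\|x\|_V^2\,d\tilde\mu_n<\infty$ with no a priori moment assumption on the $\tilde\mu_n$.

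One thing to tidy: your $\phi_R$ is just $\|x\|_V^2\wedge R$, which is lower semicontinuous but not continuous on $H$. This is harmless. Since $p_t^*\tilde\mu_n=\tilde\mu_n$ as measures, invariance holds for bounded Borel test functions. Alternatively, approximate from below by $\|P_m x\|_V^2\wedge R$, with $P_m$ the spectral projections of $\mathcal{A}$.

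\textbf{The whole sequence.} Your caveat is necessary, not just cautious. Theorem \ref{2DHD-the5.5} yields only subsequential convergence to some element of $\mathscr{S}^{\widehat\varepsilon_1,\widehat\varepsilon_2}$, and without uniqueness the literal statement can fail. Take $h\equiv0$ and $G\equiv0$, which satisfy Hypothesis \ref{2DHD-Ass2.4}; then every system in the family is the same deterministic equation. If that equation has two stationary solutions, alternating the corresponding Dirac masses gives a non-convergent sequence of invariant measures. So the right reading of the theorem is subsequential convergence, upgraded to full convergence when $\mathscr{S}^{\widehat\varepsilon_1,\widehat\varepsilon_2}$ is a singleton.
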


\section{Pullback measure attractors}\label{2DHD-PMAs6}
In this section, we focus on the existence, uniqueness and asymptotic autonomy of pullback measure attractors of system (\ref{2DHD-1.1}). To this end, we first introduce some notations on the space of probability measures. 

$\bullet$ For the space $C_b(H)$, we endow with the supremum norm $\|\phi\|_{C_b}=\sup_{x\in H}|\phi(x)|$. Let $L_b(H)$ denote the space of all bounded Lipschitz functions on $H$, which consists of all functions $\phi\in C_b(H)$ such that
\begin{align*}
	\text{Lip}(\phi): = \sup\limits_{x_{1},x_{2} \in H,x_{1} \neq x_{2}}\ \frac{\left| \phi\left( x_{1} \right) - \phi\left( x_{2} \right) \right|}{{\| x_{1} - x_{2}\|}_{H}} < \infty,
\end{align*}
and the space $L_b(H)$ is endowed with the norm
$$
\|\phi\|_{L_b}=\|\phi\|_{C_b}+\text{Lip}(\phi),~~~\forall \phi\in L_b(H).
$$

$\bullet$ For given $\phi\in C_b(H)$ and $\tilde{\mu}\in \mathcal{P}(H)$, we define
$$
(\phi,\tilde{\mu}):=\int_{H} \phi(x)\tilde{\mu}(dx).
$$
The probability measure sequence $\{\tilde{\mu}_n\}_{n=1}^\infty\subset \mathcal{P}(H)$ is weakly convergent to $\tilde{\mu} \in \mathcal{P}(H)$, if for all $\phi\in C_b(H)$,
$
\lim\limits_{n\rightarrow \infty} \left(\phi,\tilde{\mu}_n\right)=\left(\phi,\tilde{\mu}\right)$.
We define the metric of $\mathcal{P}(H)$ by
$$
d_{\mathcal{P}(H)}\left( \tilde{\mu}_{1},\tilde{\mu}_{2} \right) = \sup\limits_{{\phi \in L_{b}{(H)}},\,{\|\phi\|_{L_b} \leq 1}}\ \left| \left( \phi,\tilde{\mu}_{1} \right) - \left( \phi,\tilde{\mu}_{2} \right) \right|,\quad\forall \tilde{\mu}_{1},\tilde{\mu}_{2} \in \mathcal{P}(H).
$$
It is known that $\left(\mathcal{P}(H),d_{\mathcal{P}(H)}\right)$ is a Polish space. For every $p\geq 1$, the space $\mathcal P_p \left( H \right)$ is defined by
It is known that $\left(\mathcal{P}(H),d_{\mathcal{P}(H)}\right)$ is a Polish space. For every $p\geq 1$, the space $\mathcal P_p \left( H \right)$ is defined by
$$
\mathcal P_p \left( H \right) = \left\{ {\tilde{\mu}  \in \mathcal
	P\left( H \right):
	\int_H {\|x\|_H^p \tilde{\mu} \left( {dx} \right) <    \infty } } \right\}.
$$
Recall that the Hausdorff semi-metric of between two nonempty sets $Y,Z\in \mathcal{P}_p(X)$, it is defined as follows
$$
d_{\mathcal{P}_{p}(X)}(Y,Z) = \sup\limits_{\mu_1 \in Y}\ \inf\limits_{\mu_2 \in Z}\ d_{\mathcal{P}(X)}(\mu_1,\mu_2).
$$
%Recall that the $p$-Wasserstein distance $\mathbb{W}_p$, $p\geq 1$, is given by
%$$
%\mathbb{W}_p ( \tilde{\mu} , \nu ) =
%\inf\limits_{ \pi \in \mathscr{C} ( \tilde{\mu}, \nu ) }
%\Big (
%\int_{ H \times H}
%\| x-y  \|_H^p \pi (dx, dy)
%\Big )^{ \frac{1}{p} },\quad
%\forall \tilde{\mu}, \nu \in \mathcal{P}_p ( H ),
%$$
%where $ \mathscr{C} ( \tilde{\mu}, \nu ) $ is the  set of all couplings of $\tilde{\mu}$ and $\nu$. Then, $(\mathcal P_p(H), \mathbb{W}_p )$ is 
%Polish space.

$\bullet$ Given $r>0$, we define the ball $\mathbb{B}_{\mathcal{P}_{p}(H)}(r)$ as
\begin{align*}
	\mathbb{B}_{\mathcal{P}_{p}(H)}(r) = \left\{ \tilde{\mu} \in \mathcal{P}_{p}(H):\left( \int_{H} \|x\|_{H}^{p}\tilde{\mu}(dx) \right)^{\frac{1}{p}} \leq r \right\}.
\end{align*}
A  subset $\mathfrak{B}\subseteq {\mathcal P}_p \left( H \right)$ is
bounded  if there exists $r>0$ such that
$\mathfrak{B}\subseteq \mathbb{B}_{\mathcal{P}_{p}(H)}(r)$. If   $\mathfrak{B}$ is bounded in   $ {\mathcal P}_p ( H )$,
then we set
$$
\|\mathfrak{B}\|_{{\mathcal P}_p ( H )}
=
\sup_{
	\tilde{\mu} \in \mathfrak{B}
} \left(\int_{H} \|x\|_{H}^p \tilde{\mu}
( {dx}  )\right)^{\frac{1}{p}}<\infty.
$$

\begin{remark}
Note that 
$(\mathcal {P}_p ({H}), d_{\mathcal{P}({H})})$ is a metric space but not complete. Since
for every $r>0$, $\mathbb{B}_ {\mathcal P_p({H})} (r)$ is a closed subset of  $\mathcal{P}({H})$ with respect to the metric  $d_{\mathcal{P}({H})}$, we know that
$(\mathbb{B}_ {\mathcal P_p({H})} (r), \ d_{\mathcal{P}({H})})$
is complete for every $r>0$.	
\end{remark}

Throughout this section, we need to make some additional hypotheses on the bilinear mapping $B$ and the nonlinear terms $h$, $G$, and on the forcing term, assuming that $f\in L_{loc}^2(\mathbb{R},H)$.

\begin{description}
	\item[(B.4)] For the bilinear mapping $B$, we  assume that there exists a constant $\mathbf{c}_0>0$ such that
	\begin{align*}
		\|B(u,v)\|_{H}^2 \leq \mathbf{c}_0\|u\|_{H} \|u\|_{V}\|\mathcal{A} v\|_{H} \|v\|_{V},
		\text{~  for all } u,v\in V.
	\end{align*}
	
	\item[(C.4)] There exists a positive constant $\widetilde{L}_{g}>0$ such that for all $u\in H$,		
	$$
	\int_{\mathcal{Z}}\|G(u,z)\|_{H}^{p}\nu(dz)\leq \widetilde{L}_{g}\left(1+\|u\|_{H}^{p}\right), \quad \forall p\geq 2.
	$$
	
	\item[(C.5)] There exists a positive constant $\widehat{L}_{gv}>0$ such that for all $t\in \mathbb{R}$ and $u\in V$,		
	$$
	\|h(t,u)\|^2_{\mathcal{L}_2(U;V)}
	+
	\int_{\mathcal{Z}}\|G(u,z)\|_{V}^2\nu(dz)\leq \widehat{L}_{gv}\left(1+\|u\|_{V}^2\right).
	$$
\end{description}

We point out that, based on {Hypotheses} $\ref{2DHD-Ass2.0}$, $\ref{2DHD-Ass2.1}$, and $\ref{2DHD-Ass2.4}$, along with the condition $f\in L_{\text{loc}}^2(\mathbb{R},H)$, the results of Theorem \ref{2DHD-the3.3} remain valid even when the additional, stronger assumptions $\mathbf{(B.4)}$, $\mathbf{(C.4)}$  and $\mathbf{(C.5)}$ are imposed. Specifically, the results under stronger assumptions can be stated as follows, see \cite[Theorem 1.2]{BreLZ-NA-2014}.

\begin{theorem}\label{2DHD-the3.3-0}
	Assume that the hypotheses of Theorem \ref{2DHD-the3.3} hold, and that $\mathbf{(B.4)}$, $\mathbf{(C.4)}$  and $\mathbf{(C.5)}$  are also satisfied.
	Then, for any $u_{\tau}\in L^{p}(\Omega,\mathscr{F}_{\tau};H)$, $p\geq 2$, and $f\in L_{\text{loc}}^{p}(\mathbb{R},H)$, there exists a $\varepsilon_0>0$ such that for any $\varepsilon_1, \varepsilon_2\in (0,\varepsilon_0]\subseteq (0,1]$,
	\eqref{2DHD-1.1} has a unique  solution  $\{u(t),t\in [\tau,\tau+T]\}$ in the sense of Definition \ref{2DHD-Def3.1}, and for any $T>0$, it holds
	\begin{eqnarray*}%\label{2DHD-eq3.0-00}
		\mathbb{ E}\left[\sup_{t\in [\tau, \tau+T]}\|u(t)\|_{H}^{p}\right]+\mathbb{ E}\left[\int_{\tau}^{\tau+T} \|u(t)\|_{H}^{p-2}\|u(t)\|_{V}^2d t\right]\leq C_T\left(1+\mathbb{ E} [\|u_{\tau}\|_{H}^{p}]+\int_{\tau}^{\tau+T}\|f(s)\|^{p}_{H}ds\right).
	\end{eqnarray*}
\end{theorem}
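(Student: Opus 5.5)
Well-posedness is already provided by Theorem \ref{2DHD-the3.3}, since the extra hypotheses $\mathbf{(B.4)}$, $\mathbf{(C.4)}$, $\mathbf{(C.5)}$ only strengthen the setting. So for $u_\tau\in L^p(\Omega,\mathscr{F}_\tau;H)\subset L^2(\Omega,\mathscr{F}_\tau;H)$ we already have a unique solution in the sense of Definition \ref{2DHD-Def3.1}, and the only new content is the $p$-th moment bound. We plan to prove it by applying It\^{o}'s formula with jumps to $\|u\|_H^p$, localized by the stopping times $\zeta_k=\inf\{t\ge\tau:\|u(t)\|_H>k\}$ used in the proof of Theorem \ref{2DHD-the3.3}. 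At the end we let $k\to\infty$ via Fatou's lemma, using $\mathbb{P}(\zeta=\infty)=1$.

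\textbf{It\^{o} expansion.} Write $\Phi(x)=\|x\|_H^p$. By the skew-symmetry $\mathbf{(B.1)}$, the nonlinearity drops out. The drift part gives
\[
-p\mu\|u\|_H^{p-2}\|u\|_V^2+p\|u\|_H^{p-2}\langle f,u\rangle+\tfrac{p(p-1)}{2}\varepsilon_1^2\|u\|_H^{p-2}\|h(t,u)\|^2_{\mathcal{L}_2(U;H)}.
\]
The jump part contributes a compensated martingale plus the compensator term
\[
\varepsilon_2^p\int_{\mathcal{Z}}\Bigl(\|u+\varepsilon_2 G\|_H^p-\|u\|_H^p-p\varepsilon_2\|u\|_H^{p-2}\langle u,G\rangle\Bigr)\nu(dz),
\]
with the factor of $\varepsilon_2$ placed appropriately. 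By Taylor's formula this integrand is bounded by $C_p\bigl(\|u\|_H^{p-2}\varepsilon_2^2\|G\|_H^2+\varepsilon_2^p\|G\|_H^p\bigr)$. We then bound the terms as follows. Condition $\mathbf{(C.2)}$ controls the $\|u\|^{p-2}\|G\|^2$ and $\|h\|^2$ pieces by $C(1+\|u\|_H^p)$, and $\mathbf{(C.4)}$ controls $\int\|G\|^p\,d\nu\le \widetilde L_g(1+\|u\|_H^p)$. For the forcing we use $\|u\|^{p-2}\langle f,u\rangle\le \|u\|_H^{p-1}\|f\|_H\le C(\|u\|_H^p+\|f\|_H^p)$, which is where $f\in L^p_{loc}(\mathbb{R},H)$ enters. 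The dissipative term $p\mu\|u\|^{p-2}\|u\|_V^2$ stays on the left.

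\textbf{Supremum bound.} Taking expectations at $t\wedge\zeta_k$, the stochastic integrals vanish, and Gronwall's lemma gives $\sup_t\mathbb{E}\|u(t\wedge\zeta_k)\|^p$ together with the integral term. To put the supremum inside the expectation, we use the BDG inequality (Lemma \ref{BDSL}) on both martingales.

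The Wiener term is the easy one: its quadratic variation is bounded by $\int\|u\|^{2p-2}\|h\|^2\,ds\le \sup\|u\|^p\int\|u\|^{p-2}\|h\|^2$, so Young's inequality absorbs $\frac14\mathbb{E}\sup\|u\|^p$.

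The main obstacle is the Poisson martingale $\int\!\int(\|u+\varepsilon_2G\|^p-\|u\|^p)\,\widetilde N$. We will use BDG with exponent $1$, which gives the bound $\mathbb{E}\bigl(\int\!\int|\cdot|^2N(ds,dz)\bigr)^{1/2}$. Then:
\begin{itemize}
\item[(i)] Use $\bigl|\|u+\varepsilon_2G\|^p-\|u\|^p\bigr|\le C_p(\|u\|^{p-1}\|G\|+\|G\|^p)$.
\item[(ii)] Pull out $\sup\|u\|^{p/2}$ from the part involving $\|u\|^{p-1}\|G\|$.
\item[(iii)] Bound the remaining square root using the elementary inequality $(\sum a_i^2)^{1/2}\le\sum a_i$ for the pure $\|G\|^p$ part, whose expectation against $N$ equals its $\nu$-integral and is controlled by $\mathbf{(C.4)}$.
\end{itemize}
This yields $\frac14\mathbb{E}\sup\|u\|^p+C\,\mathbb{E}\int(1+\|u\|^p)\,ds$. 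Absorbing and applying Gronwall again gives the claimed estimate on $[\tau,\tau+T]$ uniformly in $k$. Since $\mathbb{E}\sup_{s\le t}\|u(s\wedge\zeta_k)\|^p$ is finite by the localization, the absorption is legitimate, and Fatou's lemma concludes the proof. Condition $\mathbf{(C.5)}$ and $\mathbf{(B.4)}$ are not needed for this estimate; they are reserved for the later $V$-regularity estimates.
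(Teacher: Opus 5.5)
Your argument is correct, but it is not the paper's route. The paper gives no proof: it invokes \cite[Theorem 1.2]{BreLZ-NA-2014} and obtains existence, uniqueness and the $L^p$ bound at once from a general framework for SPDEs with locally monotone coefficients driven by L\'evy noise. The jump hypothesis of that framework is the $p$-th moment condition $\mathbf{(C.4)}$.

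You instead take well-posedness from Theorem \ref{2DHD-the3.3}, which is legitimate since $L^p(\Omega;H)\subset L^2(\Omega;H)$ and $L^p_{loc}(\mathbb{R},H)\subset L^2_{loc}(\mathbb{R},V')$. You then prove only the moment estimate, using It\^o's formula for $\|u\|_H^p$, localization, BDG with exponent one, and the counting-measure inequality $\bigl(\int\!\!\int\|G\|_H^{2p}N(ds,dz)\bigr)^{1/2}\le\int\!\!\int\|G\|_H^{p}N(ds,dz)$. That inequality is exactly where $\mathbf{(C.4)}$ enters.

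What each approach buys:
\begin{itemize}
\item Your version is self-contained and makes explicit that only $\mathbf{(B.1)}$, $\mathbf{(C.2)}$ and $\mathbf{(C.4)}$ are used; as you note, $\mathbf{(B.4)}$ and $\mathbf{(C.5)}$ play no role here.
\item It also avoids checking that the hypotheses of the cited framework follow from the two-sided local Lipschitz condition $\mathbf{(C.3)}$, which the paper never verifies.
\item The citation buys brevity and a ready-made strong-solution theory.
\end{itemize}

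Three small things to tidy.
\begin{itemize}
\item The compensator carries no prefactor $\varepsilon_2^p$. The correct integrand is $\|u+\varepsilon_2G\|_H^p-\|u\|_H^p-p\varepsilon_2\|u\|_H^{p-2}\langle u,G\rangle$, and your bound via \eqref{LSWs3.meanin24} then applies. Likewise, the Wiener correction $\frac{p(p-1)}{2}\varepsilon_1^2\|u\|_H^{p-2}\|h\|^2_{\mathcal{L}_2(U;H)}$ is an upper bound, not an identity.
\item For $2\le p<4$ the map $x\mapsto x^{p/2}$ is not $C^2$ at $0$. It\^o's formula for $\|u\|_H^p$ in the Gelfand-triple setting is therefore best obtained from the one for $\|u\|_H^2$ applied to $(\|u\|_H^2+\delta)^{p/2}$, letting $\delta\downarrow 0$.
\item The process may overshoot the level $k$ by a jump at $\zeta_k$. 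So the finiteness of $\mathbb{E}\sup_{s\le t}\|u(s\wedge\zeta_k)\|_H^p$, which you need for the absorption step, is not immediate from the definition of $\zeta_k$. It follows from $\sup_{s\le t}\|u(s\wedge\zeta_k)\|_H^p\le k^p+\|u(t\wedge\zeta_k)\|_H^p$ together with your first, expectation-level bound.
\end{itemize}
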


\subsection{Long-time uniform estimates of solutions}

We now establish some uniform moment estimates of (\ref{2DHD-1.1}), which is necessary for proving the existence and uniqueness of pullback measure attractors. 
%For any $p\geq 2$, we further assume that 
%\begin{align}\label{2DHD-eq5.1}
%	\nu>\frac{8\mathbf{c}_3}{p\lambda_1}.
%\end{align}
%From \eqref{2DHD-eq5.1}, it is easy to deduce that there exists a sufficiently small constant $\gamma\in (0,1)$ such that
%\begin{align}\label{2DHD-eq5.2}
%	\frac{\mu \lambda_1}{2}-\widetilde{\kappa}>0.
%\end{align}
Assume that the deterministic forcing term $f$ satisfies
\begin{align}\label{LSWs6.3}
	\int_{-\infty}^{\tau}e^{\widetilde{\kappa} s} \|f(s)\|_{H}^{4} ds<\infty, \quad \forall \tau\in\mathbb{R},
\end{align}
where $\widetilde{\kappa}\in (0,\frac{\mu \lambda_1}{2})$.
It follows from Taylor formula that there exists a constant $\mathbf{c}_p>0$ such that for every $p\geq 2$,
\begin{align}\label{LSWs3.meanin24}
	\left|\|\wp+\varsigma\|_{H}^{p}-\|\wp\|_{H}^{p}-{p}\|\wp\|_{H}^{{p}-2}\left(\wp,\varsigma\right)\right|\leq \mathbf{c}_p\left(\|\wp\|_{H}^{p-2}\|\varsigma\|_{H}^2+\|\varsigma\|_{H}^{p}\right), \quad \forall \wp,\varsigma \in H.
\end{align}
%we define $\mathscr{D}$ as
%\begin{align*}
%	{\mathscr{D}}=\big\{&{\mathcal{D}}=\left\{\mathcal{D}(\tau)\subseteq \mathcal{P}_4(H):  \mathcal{D}(\tau)\neq\emptyset \text{ bounded in } \mathcal{P}_4(H), \tau\in \mathbb{R}\right\}:\\
%	&\lim_{\tau\rightarrow -\infty} e^{2\gamma \tau}\|D(\tau)\|^4_{\mathcal{P}_4(H)}=0\big\}.
%\end{align*}
\begin{lemma}\label{2DHD-lemma6.3}
Assume that the hypotheses of Theorem \ref{2DHD-the3.3-0} and condition \eqref{LSWs6.3} hold. There exists a $\varepsilon_0=\min \left\{1,\sqrt{\frac{\mu\lambda_1}{12L_g+2\mathbf{c}_4L_g+8\mathbf{c}_4\widetilde{L}_g}}\right\}$ such that for all $\tau \in \mathbb{R}$, $t>1$ and $\varepsilon_1, \varepsilon_2 \in (0,\varepsilon_0]$, the solution $ u$ of system (\ref{2DHD-1.1}) satisfies
\begin{align}\label{LSWs6.4}
	\begin{split}
		&~~\int_{\tau-1}^{\tau}\mathbb{ E}\left[\|u(s,\tau-t,u_{\tau-t})\|_{V}^2\right]ds\\
		&\leq \rho_2 e^{-\widetilde{\kappa} t}\mathbb{ E}\left[\|u_{\tau-t}\|_{H}^2\right]+\rho_2\left(\int_{-\infty}^{\tau} e^{-\widetilde{\kappa} (\tau-s)}\|f(s)\|_{H}^{4}ds\right)^{1/2}+\rho_2,
	\end{split}
\end{align}
and
\begin{align}\label{LSWs6.5}
	&~\mathbb{ E}\left[\|u(\tau,\tau-t,u_{\tau-t})\|_{H}^4\right]+\int_{\tau-t}^{\tau}e^{-\widetilde{\kappa} (\tau-s)}\mathbb{ E}\left[\|u(s,\tau-t,u_{\tau-t})\|_{H}^2\|u(s,\tau-t,u_{\tau-t})\|_{V}^2\right]ds\notag\\
	&\leq \rho_3 e^{-\widetilde{\kappa} t}\mathbb{ E}\left[\|u_{\tau-t}\|_{H}^4\right]+\rho_3\int_{-\infty}^{\tau}e^{-\widetilde{\kappa} (\tau-s)} \|f(s)\|_{H}^{4} ds+\rho_3,
\end{align}
where $u_{\tau-t}\in L^{4}(\Omega,\mathscr{F}_{\tau-t};H)$, $\rho_2, \rho_3>0$ are the positive numbers independent of $\varepsilon_1, \varepsilon_2$, $\tau$, $t$ and $u_{\tau-t}$, and $\widetilde{\kappa}$ is the same number as in \eqref{LSWs6.3}.
\end{lemma}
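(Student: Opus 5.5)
We will prove \eqref{LSWs6.5} first, via It\^{o}'s formula with jumps applied to $e^{\widetilde{\kappa} t}\|u(t)\|_H^4$, and then derive \eqref{LSWs6.4} from the second-moment identity already used in Lemma \ref{2DHD-lem4.1}.

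\textbf{Fourth moment.} Using \eqref{LSWs3.meanin24} with $p=4$ to handle the jump part, we get the following. The drift contains $-4\mu e^{\widetilde{\kappa} t}\|u\|_H^2\|u\|_V^2$; the term with $B$ vanishes by Remark \ref{Remweee2.3}. The forcing term $4\|u\|_H^2\langle f,u\rangle$ is bounded by $\frac{\mu\lambda_1}{4}\|u\|_H^4+C_\mu\|f\|_H^4$, using Young's inequality and the Poincar\'e inequality on $\|u\|_H^2\|u\|_H\|f\|_H$. The Gaussian It\^{o} correction is $2\varepsilon_1^2\|u\|_H^2\|h\|_{\mathcal L_2}^2+4\varepsilon_1^2\|h^*u\|^2\le 6\varepsilon_1^2L_g(1+\|u\|_H^2)\|u\|_H^2$ by $\mathbf{(C.2)}$.

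For the jump term we compensate, so that its expectation is
\[
\varepsilon_2^2\,\mathbb E\!\int\!\!\int_{\mathcal Z}\big(\|u+\varepsilon_2 G\|^4-\|u\|^4-4\varepsilon_2\|u\|^2\langle u,G\rangle\big)\nu(dz)ds/\varepsilon_2^2 .
\]
By \eqref{LSWs3.meanin24} this is at most $\mathbf c_4\varepsilon_2^2\big(\|u\|_H^2L_g(1+\|u\|_H^2)+\widetilde L_g(1+\|u\|_H^4)\big)$, using $\mathbf{(C.2)}$, $\mathbf{(C.4)}$ and $\varepsilon_2^4\le\varepsilon_2^2$.

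We then collect all the $\|u\|_H^4$ contributions. The choice $\varepsilon_0^2(12L_g+2\mathbf c_4L_g+8\mathbf c_4\widetilde L_g)\le\mu\lambda_1$ makes their total coefficient at most a fraction of $\mu\lambda_1$ (with $\|u\|_H^2\le 1+\|u\|_H^4$ used to absorb the quadratic terms into constants). The remaining dissipation then dominates $\widetilde\kappa\|u\|_H^4$ and still leaves a multiple of $\|u\|_H^2\|u\|_V^2$ on the left, using $\widetilde\kappa<\mu\lambda_1/2$ and splitting $4\mu\|u\|^2\|u\|_V^2$ into a part that controls $\|u\|_H^4$ via Poincar\'e and a part kept on the left.

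The martingale terms have zero expectation. To justify this rigorously, we localize with the stopping times $\zeta_k$ from Theorem \ref{2DHD-the3.3} and pass to the limit with Fatou's lemma, using the $L^p$ bound of Theorem \ref{2DHD-the3.3-0}. Integrating from $\tau-t$ to $\tau$ and multiplying by $e^{-\widetilde\kappa\tau}$ yields \eqref{LSWs6.5}; the constant term becomes $C/\widetilde\kappa$.

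\textbf{The $V$-integral.} We argue as in \eqref{2DHD-eq4.10} with $\widetilde\kappa$ in place of $\kappa$, bounding the forcing by $\frac{\mu}{2}\|u\|_V^2+\frac{2}{\lambda_1\mu}\|f\|_H^2$. This gives, for $r\in[\tau-1,\tau]$,
\[
\mathbb E\|u(r)\|_H^2\le e^{-\widetilde\kappa(r-\tau+t)}\mathbb E\|u_{\tau-t}\|_H^2+C\int_{-\infty}^{\tau}e^{-\widetilde\kappa(\tau-s)}\|f(s)\|_H^2ds+C .
\]
Integrating the differential inequality over $[\tau-1,\tau]$ (this is why $t>1$ is required) bounds $\frac{\mu}{2}\int_{\tau-1}^\tau\mathbb E\|u\|_V^2$ by $\mathbb E\|u(\tau-1)\|_H^2+C\int_{\tau-1}^\tau\|f\|_H^2+C$. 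Finally, Cauchy--Schwarz gives
\[
\int_{-\infty}^\tau e^{-\widetilde\kappa(\tau-s)}\|f\|_H^2ds\le\widetilde\kappa^{-1/2}\Big(\int_{-\infty}^\tau e^{-\widetilde\kappa(\tau-s)}\|f\|_H^4ds\Big)^{1/2},
\]
and the integral over $[\tau-1,\tau]$ is bounded in the same way, which produces \eqref{LSWs6.4}.

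The main obstacle is the jump-correction bookkeeping in the fourth-moment estimate. The difficulty is checking that the constants from \eqref{LSWs3.meanin24}, together with the growth bounds, fit under the stated $\varepsilon_0$ while leaving a positive multiple of $\|u\|_H^2\|u\|_V^2$ and a margin above $\widetilde\kappa$. The localization needed to make the compensated martingales vanish also requires some care.
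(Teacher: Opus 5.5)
Your proposal is correct and follows essentially the same route as the paper. For \eqref{LSWs6.5} you both apply It\^{o}'s formula to $e^{\widetilde{\kappa} t}\|u\|_H^4$, control the jump correction with the Taylor bound \eqref{LSWs3.meanin24} together with $\mathbf{(C.2)}$ and $\mathbf{(C.4)}$, and use the choice of $\varepsilon_0$ to absorb the $\|u\|_H^4$ terms into the part of $4\mu\|u\|_H^2\|u\|_V^2$ converted via Poincar\'e; for \eqref{LSWs6.4} you both use the weighted second-moment estimate plus Cauchy--Schwarz. The only variation is that the paper reads \eqref{LSWs6.4} directly off the weighted $V$-integral using $e^{-\widetilde{\kappa}(\tau-s)}\ge e^{-\widetilde{\kappa}}$ on $[\tau-1,\tau]$, whereas you integrate the unweighted inequality on $[\tau-1,\tau]$ and bound $\mathbb{E}\|u(\tau-1)\|_H^2$ separately; this is equally valid, and your stopping-time justification of the vanishing martingale terms adds rigor the paper omits.
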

\begin{proof}
	By \eqref{2DHD-eq4.8} and \eqref{2DHD-eq4.9}, we have
	\begin{align}\label{LSWs6.6}
		&~\mathbb{E}\left[\|u(\tau,\tau-t,u_{\tau-t})\|_{H}^2\right]+\frac{\mu}{2} \int_{\tau-t}^{\tau} e^{-\widetilde{\kappa} (\tau-s)}\mathbb{E}\left[\|u(s,\tau-t,u_{\tau-t})\|_{V}^2\right]ds\notag\\
		&+\left(\frac{\mu\lambda_1}{2}  -\widetilde{\kappa} \right)\int_{\tau-t}^{\tau}e^{-\widetilde{\kappa} (\tau-s)}\mathbb{E}\left[\|u(s,\tau-t,u_{\tau-t})\|_{H}^2\right]ds\notag \\
		&\leq e^{-\widetilde{\kappa} t}\mathbb{E}\left[\|u_{\tau-t}\|_{H}^2\right]+\frac{2}{\mu\lambda_1\widetilde{\kappa}^{1/2}}\left(\int_{-\infty}^{\tau} e^{-\widetilde{\kappa} (\tau-s)}\|f(s)\|_{H}^2ds\right)^{1/2} + \frac{\mu\lambda_1 }{2\widetilde{\kappa}}.
	\end{align}
	It is obvious that for all $t> 1$,
	\begin{align*}
	\int_{\tau-1}^{\tau}\mathbb{ E}\left[\|u(s,\tau-t,u_{\tau-t})\|_{V}^2\right]ds
	\leq e^{\widetilde{\kappa}}\int_{\tau-t}^{\tau} 
	e^{-\widetilde{\kappa} (\tau-s)}\mathbb{E}\left[\|u(s,\tau-t,u_{\tau-t})\|_{V}^2\right]ds, 
\end{align*}
which along with \eqref{LSWs6.6} can infer that \eqref{LSWs6.4}, we only need to set $\rho_2=\frac{e^{\widetilde{\kappa}}}{\min\left\{1,\frac{\mu}{2}\right\}}\max\left\{1,\frac{2}{\mu\lambda_1},\frac{\mu\lambda_1 \widetilde{\kappa}^{1/2}}{2\widetilde{\kappa}}\right\}$.

Next, we prove \eqref{LSWs6.5}. Applying It\^{o}'s formula with jump to the process $e^{\widetilde{\kappa} \tau}\|u(\tau,\tau-t,u_{\tau-t})\|_{H}^{4}$, we can obtain
\begin{align}\label{LSWs6.06}
	&~~\mathbb{E}\left[\|u(\tau,\tau-t,u_{\tau-t})\|_{H}^{4}\right]
	+2\mu \int_{\tau-t}^{\tau} e^{-\widetilde{\kappa} (\tau-s)}\mathbb{E}\left[\|u(s,\tau-t,u_{\tau-t})\|_{H}^2\|u(s,\tau-t,u_{\tau-t})\|_{V}^2\right]ds\notag\\
	&+\left(2{\mu\lambda_1}  -\widetilde{\kappa} \right)\int_{\tau-t}^{\tau}e^{-\widetilde{\kappa} (\tau-s)}\mathbb{E}\left[\|u(s,\tau-t,u_{\tau-t})\|_{H}^4\right]ds\notag \\
	&\leq e^{-\widetilde{\kappa} t}\mathbb{E}\left[\|u_{\tau-t}\|_{H}^4\right]+4\int_{\tau-t}^{\tau}e^{-\widetilde{\kappa} (\tau-s)}\mathbb{E}\left[\|u(s,\tau-t,u_{\tau-t})\|_{H}^2\left|\langle f(s),u(s,\tau-t,u_{\tau-t})\rangle\right|\right]ds\notag \\
	&+6\varepsilon_1^2 \int_{\tau-t}^{\tau}e^{-\widetilde{\kappa} (\tau-s)}\mathbb{E}\left[\|u(s,\tau-t,u_{\tau-t})\|_{H}^2\|h(s,u(s,\tau-t,u_{\tau-t}))\|_{\mathcal{L}_2(U;H)}^2\right]ds\notag \\
	&+\mathbb{E}\bigg[\int_{\tau-t}^{\tau}e^{-\widetilde{\kappa} (\tau-s)}  \int_{\mathcal{Z}} \|u(s-,\tau-t,u_{\tau-t})+\varepsilon_2 G(u(s-,\tau-t,u_{\tau-t}),z)\|_H^4-\|u(s-,\tau-t,u_{\tau-t})\|_{H}^4\notag \\
	&-4\varepsilon_2\|u(s-,\tau-t,u_{\tau-t})\|_{H}^2\langle u(s-,\tau-t,u_{\tau-t}),G(u(s-,\tau-t,u_{\tau-t}),z) \rangle N(ds,dz)\bigg]\notag \\
	&:=e^{-\widetilde{\kappa} t}\mathbb{E}\left[\|u_{\tau-t}\|_{H}^4\right]+\mathscr{J}_{9}+\mathscr{J}_{10}+\mathscr{J}_{11}.
\end{align}
By H\"{o}lder's inequality and Young inequality we have
\begin{align}\label{LSWs6.07}
	\mathscr{J}_{9} &\leq 4\int_{\tau-t}^{\tau}e^{-\widetilde{\kappa} (\tau-s)}\mathbb{E}\left[\|u(s,\tau-t,u_{\tau-t})\|_{H}^{3}\|f(s)\|_{H}\right]ds\notag\\
	&\leq \frac{\mu \lambda_1}{2}\int_{\tau-t}^{\tau}e^{-\widetilde{\kappa} (\tau-s)}\mathbb{E}\left[\|u(s,\tau-t,u_{\tau-t})\|_{H}^{4}\right]ds
	+\frac{216}{\mu^3\lambda_1^3}\int_{-\infty}^{\tau}e^{-\widetilde{\kappa} (\tau-s)}\|f(s)\|_{H}^{4}ds.
\end{align}
Combining with $\mathbf{(C.2)}$, $\mathbf{(C.4)}$ and \eqref{LSWs3.meanin24} yields
\begin{align}\label{LSWs6.08}
\mathscr{J}_{10}+&\mathscr{J}_{11}
\leq 6\varepsilon_1^2 \int_{\tau-t}^{\tau}e^{-\widetilde{\kappa} (\tau-s)}\mathbb{E}\left[\|u(s,\tau-t,u_{\tau-t})\|_{H}^2\|h(s,u(s,\tau-t,u_{\tau-t}))\|_{\mathcal{L}_2(U;H)}^2\right]ds\notag \\	
& +\mathbf{c}_4\int_{\tau-t}^{\tau}e^{-\widetilde{\kappa} (\tau-s)}  \mathbb{E}\bigg[\int_{\mathcal{Z}} \varepsilon_2^2\|u(s-,\tau-t,u_{\tau-t})\|_H^2\|G(u(s-,\tau-t,u_{\tau-t}),z)\|_{H}^2\notag \\
& \qquad \qquad\quad \qquad \qquad+\varepsilon_2^4\|G(u(s-,\tau-t,u_{\tau-t}),z)\|_{H}^{4}\nu(dz)\bigg] ds\notag \\
&\leq \left(6L_g+\mathbf{c}_4L_g+\mathbf{c}_4\widetilde{L}_g\right)\varepsilon_0^2\int_{\tau-t}^{\tau}e^{-\widetilde{\kappa} (\tau-s)}\mathbb{E}\left[\|u(s,\tau-t,u_{\tau-t})\|_{H}^4\right]ds\notag\\
&+L_g\left(6+\mathbf{c}_4\right)\varepsilon_0^2\int_{\tau-t}^{\tau}e^{-\widetilde{\kappa} (\tau-s)}\mathbb{E}\left[\|u(s,\tau-t,u_{\tau-t})\|_{H}^{2}\right]ds+\mathbf{c}_4\widetilde{L}_g \varepsilon_0^2\int_{\tau-t}^{\tau}e^{-\widetilde{\kappa} (\tau-s)}ds\notag \\
&\leq \left(12L_g+2\mathbf{c}_4L_g+\mathbf{c}_4\widetilde{L}_g\right)\varepsilon_0^2\int_{\tau-t}^{\tau}e^{-\widetilde{\kappa} (\tau-s)}\mathbb{E}\left[\|u(s,\tau-t,u_{\tau-t})\|_{H}^4\right]ds+\frac{L_g\left(6+\mathbf{c}_4\right)+4{c}_4\widetilde{L}_g}{4\widetilde{\kappa}}\varepsilon_0^2\notag\\
&\leq \mu\lambda_1\int_{\tau-t}^{\tau}e^{-\widetilde{\kappa} (\tau-s)}\mathbb{E}\left[\|u(s,\tau-t,u_{\tau-t})\|_{H}^4\right]ds
+\mu\lambda_1\frac{L_g\left(6+\mathbf{c}_4\right)+4{c}_4\widetilde{L}_g}{4\widetilde{\kappa}\left(12L_g+2\mathbf{c}_4L_g+8\mathbf{c}_4\widetilde{L}_g\right)}\notag\\
&= \mu\lambda_1\int_{\tau-t}^{\tau}e^{-\widetilde{\kappa} (\tau-s)}\mathbb{E}\left[\|u(s,\tau-t,u_{\tau-t})\|_{H}^4\right]ds
+\frac{\mu\lambda_1}{8\widetilde{\kappa}}.
\end{align}
where we use the fact $\varepsilon_1,\varepsilon \in(0,\varepsilon_0]\subseteq (0,1]$.
\end{proof}

By \eqref{LSWs6.06}-\eqref{LSWs6.08} and $\widetilde{\kappa}\in (0,\frac{\mu \lambda_1}{2})$ we have
\begin{align}
	&~~\mathbb{E}\left[\|u(\tau,\tau-t,u_{\tau-t})\|_{H}^{4}\right]
	+2\mu \int_{\tau-t}^{\tau} e^{-\widetilde{\kappa} (\tau-s)}\mathbb{E}\left[\|u(s,\tau-t,u_{\tau-t})\|_{H}^2\|u(s,\tau-t,u_{\tau-t})\|_{V}^2\right]ds\notag\\
	&\leq e^{-\widetilde{\kappa} t}\mathbb{E}\left[\|u_{\tau-t}\|_{H}^4\right]+\frac{216}{\mu^3\lambda_1^3}\int_{-\infty}^{\tau}e^{-\widetilde{\kappa} (\tau-s)}\|f(s)\|_{H}^{4}ds+\frac{\mu\lambda_1}{8\widetilde{\kappa}}.
\end{align}
Let $\rho_3=\frac{1}{\min\left\{1,2\mu\right\}}\max\left\{1,\frac{216}{\mu^3\lambda_1^3},\frac{\mu\lambda_1}{8\widetilde{\kappa}}\right\}$, then the desired estimate \eqref{LSWs6.5} can be obtained. This completes the proof.
%$\varepsilon_0\leq \sqrt{\frac{\mu\lambda_1}{12L_g+2\mathbf{c}_4L_g+8\mathbf{c}_4\widetilde{L}_g}}$

\begin{lemma}\label{2DHD-lemma6.4}
	Under the hypotheses of Lemma \ref{2DHD-lemma6.3}. There exists a $$
	\varepsilon_0=\min\left\{1,\sqrt{\frac{\mu\lambda_1}{2\widehat{L}_{gv}}},\sqrt{\frac{\mu\lambda_1}{12L_g+2\mathbf{c}_4L_g+8\mathbf{c}_4\widetilde{L}_g}}\right\}
	$$
	 such that for all $\tau \in \mathbb{R}$, $t>1$ and $\varepsilon_1, \varepsilon_2 \in (0,\varepsilon_0]$, the solution $ u$ of system (\ref{2DHD-1.1}) satisfies
	\begin{align*}%\label{LSWs6.10}
			&~\mathbb{E}\left[\mathfrak{P}(\tau,\tau-t,u_{\tau-t})\| u(\tau,\tau-t,u_{\tau-t})\|_V^2\right]\\
			&\leq \rho_4 e^{-\widetilde{\kappa} t}\mathbb{ E}\left[\|u_{\tau-t}\|_{H}^2\right]+\rho_4\left(\int_{-\infty}^{\tau} e^{-\widetilde{\kappa} (\tau-s)}\|f(s)\|_{H}^{4}ds\right)^{1/2}+\rho_4
		\end{align*}
		with
		\begin{align}\label{NSE-disghjmathfrak{F}}
			\mathfrak{P}(\tau,\tau-t,u_{\tau-t})=e^{-\frac{27
					\mathbf{c}_0^2}{2\mu^3}\int_{\tau-t}^{\tau} \|u(s,\tau-t,u_{\tau-t})\|_H^2 \|u(s,\tau-t,u_{\tau-t})\|_V^2ds},
		\end{align}
	where $u_{\tau-t}\in L^{4}(\Omega,\mathscr{F}_{\tau-t};H)$, $\rho_4>0$ is a positive numbers independent of $\varepsilon_1, \varepsilon_2$, $\tau$, $t$ and $u_{\tau-t}$, and $\widetilde{\kappa}$ is the same number as in \eqref{LSWs6.3}.
\end{lemma}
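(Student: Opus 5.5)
The plan is to apply It\^{o}'s formula with jumps to the weighted process $\mathfrak{P}(r,\tau-t,u_{\tau-t})\|u(r)\|_V^2$ on a short window $r\in[s,\tau]$ with $s\in[\tau-1,\tau]$, and then integrate in $s$ over $[\tau-1,\tau]$. This is the uniform Gronwall device, with the deterministic exponential weight playing the role of the pathwise Gronwall factor. Formally we work with Galerkin approximations, or with $\mathcal{A}^{1/2}$ applied to the equation, to justify It\^{o}'s formula in $V$; this is routine.

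Write $\mathfrak{P}(r):=\exp\bigl(-\frac{27\mathbf{c}_0^2}{2\mu^3}\int_{\tau-t}^{r}\|u\|_H^2\|u\|_V^2\,d\sigma\bigr)$. This weight is continuous, of bounded variation, takes values in $(0,1]$ and is nonincreasing. It\^{o}'s product rule gives
\begin{align*}
d\bigl(\mathfrak{P}\|u\|_V^2\bigr)=\;&-\tfrac{27\mathbf{c}_0^2}{2\mu^3}\mathfrak{P}\|u\|_H^2\|u\|_V^4\,dr+\mathfrak{P}\Bigl(-2\mu\|\mathcal{A}u\|_H^2-2\langle B(u,u),\mathcal{A}u\rangle+2\langle f,\mathcal{A}u\rangle\Bigr)dr\\
&+\mathfrak{P}\Bigl(\varepsilon_1^2\|h\|_{\mathcal{L}_2(U;V)}^2\,dr+\varepsilon_2^2\int_{\mathcal{Z}}\|G\|_V^2\,N(dr,dz)\Bigr)+dM_r ,
\end{align*}
where $M$ is a local martingale.

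The key step is the nonlinear term. By (B.4) and Young's inequality with exponents $4$ and $4/3$,
$$2|\langle B(u,u),\mathcal{A}u\rangle|\le 2\mathbf{c}_0^{1/2}\|u\|_H^{1/2}\|u\|_V\|\mathcal{A}u\|_H^{3/2}\le \tfrac{\mu}{2}\|\mathcal{A}u\|_H^2+\tfrac{27\mathbf{c}_0^2}{2\mu^3}\|u\|_H^2\|u\|_V^4 .$$
This is exactly where the constant $\frac{27\mathbf{c}_0^2}{2\mu^3}$ in \eqref{NSE-disghjmathfrak{F}} comes from; I expect the precise bookkeeping of this constant to be the main obstacle. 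The resulting quartic term is cancelled by the derivative of the weight, so no pathwise Gronwall is needed and we may take expectations directly. The forcing term is bounded by $\frac{\mu}{2}\|\mathcal{A}u\|_H^2+\frac{2}{\mu}\|f\|_H^2$. We then use $\|\mathcal{A}u\|_H^2\ge\lambda_1\|u\|_V^2$, and (C.5) with $\varepsilon_i\le\varepsilon_0\le\sqrt{\mu\lambda_1/(2\widehat{L}_{gv})}$ lets the noise terms be absorbed into $\mu\|\mathcal{A}u\|_H^2$ up to an additive constant. Since $\mathfrak{P}\le1$, we obtain for $s\in[\tau-1,\tau]$
$$\mathbb{E}\bigl[\mathfrak{P}(\tau)\|u(\tau)\|_V^2\bigr]\le \mathbb{E}\bigl[\mathfrak{P}(s)\|u(s)\|_V^2\bigr]+\tfrac{2}{\mu}\int_{\tau-1}^{\tau}\|f\|_H^2\,dr+C .$$
Localization by stopping times together with Fatou's lemma removes the martingale term.

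Integrating this inequality in $s$ over $[\tau-1,\tau]$ and using $\mathfrak{P}(s)\le1$ bounds the first term on the right by $\int_{\tau-1}^{\tau}\mathbb{E}\|u(s)\|_V^2ds$, which \eqref{LSWs6.4} controls by the right side of the lemma. For the forcing, note that $e^{-\widetilde{\kappa}(\tau-r)}\ge e^{-\widetilde{\kappa}}$ on $[\tau-1,\tau]$, so Cauchy--Schwarz gives
$$\int_{\tau-1}^{\tau}\|f\|_H^2\,dr\le e^{\widetilde{\kappa}/2}\Bigl(\int_{-\infty}^{\tau}e^{-\widetilde{\kappa}(\tau-r)}\|f(r)\|_H^4\,dr\Bigr)^{1/2}.$$
Collecting constants then gives $\rho_4$, independent of $\varepsilon_1,\varepsilon_2,\tau,t$ and $u_{\tau-t}$.
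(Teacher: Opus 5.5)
Your proposal is correct and follows essentially the same route as the paper: It\^{o}'s formula in $V$ on $[\varrho,\tau]$, the (B.4)+Young bound producing exactly the constant $\frac{27\mathbf{c}_0^2}{2\mu^3}$ that the weight $\mathfrak{P}$ cancels, absorption of the noise terms via (C.5), integration over $\varrho\in(\tau-1,\tau)$, and then $\mathfrak{P}\le 1$ together with \eqref{LSWs6.4}. Your explicit product-rule computation for $\mathfrak{P}\|u\|_V^2$ also spells out the step the paper passes over when it goes from the unweighted identity \eqref{2DHD-eq6.10} to the weighted inequality \eqref{2DHD-eq6.11}.
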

\begin{proof}
Applying It\^{o}'s formula with jump to $\| u(\tau,\tau-t,u_{\tau-t})\|_V^2$, we can obtain that for all $\tau\in \mathbb{R}$, $t>1$ and $\varrho \in (\tau-1,\tau)$,	
\begin{align}\label{2DHD-eq6.10}
		&\,\, \| u(\tau,\tau-t,u_{\tau-t})\|_V^2
		+2\mu\int_\varrho^\tau \|\mathcal{A} u(s,\tau-t,u_{\tau-t})\|_{H}^2 ds\notag \\
		&=\| u(\varrho,\tau-t,u_{\tau-t})\|_V^2-2\int_\varrho^\tau \langle B(u(s,\tau-t,u_{\tau-t}),u(s,\tau-t,u_{\tau-t})), \mathcal{A}u(s,\tau-t,u_{\tau-t})\rangle ds\notag \\
		&+2\int_\varrho^\tau\langle f(s),\mathcal{A}u(s,\tau-t,u_{\tau-t})\rangle ds+2\varepsilon_1\int_\varrho^\tau \langle \mathcal{A}u(s,\tau-t,u_{\tau-t}),h(s,u(s,\tau-t,u_{\tau-t}))\rangle d W(s)\notag \\
		&+2\varepsilon_2\int_\varrho^\tau \int_{\mathcal{Z}}\langle \mathcal{A}u(s-,\tau-t,u_{\tau-t}),G(u(s-,\tau-t,u_{\tau-t}),z)\rangle \widetilde{N}(ds,dz)\notag \\
		&+\varepsilon_1^2 \int_\varrho^\tau\|h(s,u(s,\tau-t,u_{\tau-t}))\|_{\mathcal{L}_2(U;V)}^2ds+\varepsilon_2^2 \int_\varrho^\tau \int_{\mathcal{Z}}\|G(u(s-,\tau-t,u_{\tau-t}),z)\|_{V}^2N(ds,dz).
\end{align}
By H\"{o}lder's inequality, Young's inequality and $\mathbf{(C.5)}$, for all $\varrho \in (\tau-1,\tau)$, we have
\begin{align*}
	&2\mathbb{E}\left[\int_\varrho^\tau\langle f(s),\mathcal{A}u(s,\tau-t,u_{\tau-t})\rangle ds\right]\leq \frac{\mu}{2}\int_\varrho^\tau \mathbb{E}\left[\|\mathcal{A} u(s,\tau-t,u_{\tau-t})\|_{H}^2\right] ds+ \frac{2}{\mu} \int_\varrho^\tau \|f(s)\|_{H}^2ds,
\end{align*}
and
\begin{align*}
		&~~\varepsilon_1^2 \mathbb{E}\left[\int_\varrho^\tau\|h(s,u(s,\tau-t,u_{\tau-t}))\|_{\mathcal{L}_2(U;V)}^2ds\right]+\varepsilon_2^2 \mathbb{E}\left[\int_\varrho^\tau \int_{\mathcal{Z}}\|G(u(s-,\tau-t,u_{\tau-t}),z)\|_{V}^2N(ds,dz)\right]\notag\\
	&\leq \widehat{L}_{gv} \varepsilon_0^2\int_\varrho^\tau \mathbb{E}\left[\left(1+\|u(s,\tau-t,u_{\tau-t})\|_{V}^2\right)\right]\leq \frac{\mu}{2}\int_\varrho^\tau \mathbb{E}\left[\|\mathcal{A} u(s,\tau-t,u_{\tau-t})\|_{H}^2\right] ds+\frac{\mu\lambda_1}{2}.
\end{align*}
For the second term on the right-hand side of \eqref{2DHD-eq6.10}, by $\mathbf{(B.4)}$ we infer
\begin{align*}
		&-2\mathbb{E}\left[\int_\varrho^\tau \langle B(u(s,\tau-t,u_{\tau-t}),u(s,\tau-t,u_{\tau-t})), \mathcal{A}u(s,\tau-t,u_{\tau-t})\rangle ds\right]\\
		&\leq 2\int_\varrho^\tau \mathbb{E}\left[\|B(u(s,\tau-t,u_{\tau-t}),u(s,\tau-t,u_{\tau-t}))\|_{H}\|\mathcal{A} u(s,\tau-t,u_{\tau-t})\|_{H}\right] ds\\
		&\leq 2\sqrt{\mathbf{c}_0}\int_\varrho^\tau \mathbb{E}\left[\|u(s,\tau-t,u_{\tau-t})\|_H^{1/2}\| u(s,\tau-t,u_{\tau-t})\|_V \|\mathcal{A} u(s,\tau-t,u_{\tau-t})\|_{H}^{3/2}\right]ds\\
		&\leq \frac{\mu}{2}\int_\varrho^\tau \mathbb{E}\left[\|\mathcal{A}u(s,\tau-t,u_{\tau-t})\|_{H}^2\right]ds+\frac{27
			\mathbf{c}_0^2}{2\mu^3}\int_\varrho^\tau \mathbb{E}\left[\|u(s,\tau-t,u_{\tau-t})\|_H^2 \| u(s,\tau-t,u_{\tau-t})\|_V^{4}\right]ds.
\end{align*}

Therefore, by taking the expectation of \eqref{2DHD-eq6.10}, for all $\tau\in \mathbb{R}$, $t>1$ and $\varrho \in (\tau-1,\tau)$, we get
\begin{align}\label{2DHD-eq6.11}
	&\,\, \mathbb{E}\left[\mathfrak{P}(\tau,\tau-t,u_{\tau-t})\| u(\tau,\tau-t,u_{\tau-t})\|_V^2\right]
	\notag \\
	&\leq \mathbb{E}\left[\mathfrak{P}(\varrho,\tau-t,u_{\tau-t})\| u(\varrho,\tau-t,u_{\tau-t})\|_V^2\right]+ \frac{2}{\mu} \int_\varrho^\tau \|f(s)\|_{H}^2ds+\frac{\mu\lambda_1}{2}.
\end{align}
Integrating \eqref{2DHD-eq6.11} on $\varrho$ from $\tau-1$ to $\tau$, we have
\begin{align*}
	&\,\, \mathbb{E}\left[\mathfrak{P}(\tau,\tau-t,u_{\tau-t})\| u(\tau,\tau-t,u_{\tau-t})\|_V^2\right]
	\notag \\
	&\leq \int_{\tau-1}^\tau\mathbb{E}\left[\mathfrak{P}(\varrho,\tau-t,u_{\tau-t})\| u(\varrho,\tau-t,u_{\tau-t})\|_V^2\right]d\varrho+ \frac{2e^{\widetilde{\kappa}}}{\mu} \int_{\tau-1}^\tau e^{-\widetilde{\kappa}(\tau-s)}\|f(s)\|_{H}^2ds+\frac{\mu\lambda_1}{2}\\
	&\leq \int_{\tau-1}^\tau\mathbb{E}\left[\| u(\varrho,\tau-t,u_{\tau-t})\|_V^2\right]d\varrho+ \frac{2e^{\widetilde{\kappa}}}{\mu} \int_{-\infty}^\tau e^{-\widetilde{\kappa}(\tau-s)}\|f(s)\|_{H}^2ds+\frac{\mu\lambda_1}{2},
\end{align*}
which together with \eqref{LSWs6.4} derives
\begin{align*}
&\,\, \mathbb{E}\left[\mathfrak{P}(\tau,\tau-t,u_{\tau-t})\| u(\tau,\tau-t,u_{\tau-t})\|_V^2\right]
\notag \\
&\leq \rho_2 e^{-\widetilde{\kappa} t}\mathbb{ E}\left[\|u_{\tau-t}\|_{H}^2\right]+\left(\rho_2+\frac{2e^{\widetilde{\kappa}}}{\mu\widetilde{\kappa}^{1/2}}\right)\left(\int_{-\infty}^{\tau} e^{-\widetilde{\kappa} (\tau-s)}\|f(s)\|_{H}^{4}ds\right)^{1/2}+\rho_2
+\frac{\mu\lambda_1}{2}.
\end{align*}
Let $\rho_4=\max\left\{\rho_2+\frac{2e^{\widetilde{\kappa}}}{\mu\widetilde{\kappa}^{1/2}},\rho_2
+\frac{\mu\lambda_1}{2}\right\}$, then the desired result holds. This completes the proof.
\end{proof}

\subsection{Existence of pullback measure attractors}
To prove the existence of $\mathscr{D}$-pullback measure attractors of system (\ref{2DHD-1.1}), we first need to construct a non-autonomous dynamical system $\Psi$ on $(\mathcal{P}_4(H),d_{\mathcal{P}(H)})$. Then we prove the existence of a closed $\mathscr{D}$-pullback absorbing set and finally establish the $\mathscr{D}$-pullback asymptotic compactness of $\Psi$ in $(\mathcal{P}_4(H),d_{\mathcal{P}(H)})$. We will also retain the notation for probability measures and transition semigroups introduced in Section \ref{Probledist}, precisely, the transition semigroup is denoted by
$(p_{\tau,t}\phi)(u_{\tau})=\mathbb{E}\left[\phi(u(t,\tau,u_{\tau}))\right]$ for any $\phi\in C_b(H)$, and $p(\tau,u_{\tau};t,A)=p_{\tau,t}\chi_{A} (u_{\tau} )$ for any $A\in \mathcal{B}(H)$. %$(p(t,s)\phi)(u_{\tau}):=(p_{s,t}\phi)(u_{\tau})=\mathbb{E}\left[\phi(u(t,s,u_{\tau}))\right]$ for any $\phi\in C_b(H)$, and $p(s,u_{\tau};t,A)=p_{s,t}\chi_{A} (u_{\tau} )$ for any $A\in \mathcal{B}(H)$.

According to  Lemma \ref{2DHD-lem5.2} and Theorem \ref{2DHD-the5.3}, we can also prove that the family $\{p_{\tau,t}\}_{t\geq \tau}$ has the following properties.
\begin{lemma}\label{MarkovFell}
	Assume that the hypotheses of Theorem \ref{2DHD-the3.3-0} and \eqref{LSWs6.3} hold. Then,
	
	$(i)$ the family $\{p_{\tau,t}\}_{t\geq \tau}$ is Feller, i.e., for any $\tau\leq t$, if $\phi\in C_b({H})$, then $p_{\tau,t}\phi\in C_b({H})$;
	
	$(ii)$ for any $\tau\in \mathbb{R}$ and $u_{\tau}\in {H}$, the stochastic process $\{u(t,\tau,u_{\tau})\}_{t\geq \tau}$ is a ${H}$-valued Markov process. 
\end{lemma}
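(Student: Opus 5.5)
For part $(i)$ we follow the proof of Lemma \ref{2DHD-lem5.2}, now in the non-autonomous setting on an interval $[\tau,t_0]$ with $t_0\geq\tau$ fixed. Take $u_\tau^n\to u_\tau$ in $H$ and set $\varpi^n(t)=u(t,\tau,u^n_\tau)-u(t,\tau,u_\tau)$. The time dependence of $f$ and $h$ plays no role in the difference equation: $f$ cancels, and $\mathbf{(C.3)}$ is uniform in $t$. Next introduce the stopping time
\[
\tau_k=\inf\Big\{t\in[\tau,t_0]:\|u^n(t)\|_H^2>k \text{ or } \|u(t)\|_H^2>k \text{ or } \int_\tau^t\|u(s)\|_V^2ds>k\Big\}.
\]
Apply It\^{o}'s formula to $\|\varpi^n\|_H^2$. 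The bilinear term is handled as in \eqref{2DHD-equ-Fe5.8}, via Remark \ref{Remweee2.3}. Alternatively, one can use the weight $e^{-C\int_\tau^t\|u\|_V^2ds}$ from Remark \ref{rem5.3sdfsdf}. The martingale terms are controlled by the BDG inequality (Lemma \ref{BDSL}) together with the local Lipschitz constant $L_k$. A Gronwall argument then gives
\[
\mathbb{E}\Big[\sup_{\tau\le t\le t_0}\|\varpi^n(t\wedge\tau_k)\|_H^2\Big]\le C_k\|u_\tau^n-u_\tau\|_H^2 .
\]
The moment bound of Theorem \ref{2DHD-the3.3} (or Theorem \ref{2DHD-the3.3-0}) holds uniformly for bounded initial data, so Chebyshev's inequality gives $\mathbb{P}(\tau_k<t_0)\le C/k$, uniformly in $n$. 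Combining the two estimates yields convergence in probability $u(t_0,\tau,u^n_\tau)\to u(t_0,\tau,u_\tau)$. Boundedness of $p_{\tau,t}\phi$ is trivial. Continuity follows because convergence in probability implies $\mathbb{E}\phi(u(t_0,\tau,u_\tau^n))\to\mathbb{E}\phi(u(t_0,\tau,u_\tau))$ for $\phi\in C_b(H)$. One can see this by a subsequence argument with almost sure convergence and dominated convergence, which replaces the compact-set argument of Step 3 of Lemma \ref{2DHD-lem5.2}.

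For part $(ii)$ we use the standard argument of \cite[Theorem 9.14]{DaPrato2014}, as in the proof of Theorem \ref{2DHD-the5.3}. Fix $\tau\le r\le t$.

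\textbf{Step 1: flow property.} By pathwise uniqueness (Theorem \ref{2DHD-the3.3}), $u(t,\tau,u_\tau)=u(t,r,u(r,\tau,u_\tau))$ almost surely.

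\textbf{Step 2: independence.} The solution map $u(t,r,x)$ is measurable with respect to $\sigma\{W(s)-W(r),\,N((r,s]\times\cdot):\, r\le s\le t\}$. This $\sigma$-field is independent of $\mathscr{F}_r$, since $W$ and $N$ have independent increments. The measurability comes from the construction of the solution: truncation plus the fixed-point or limit procedure inherits this measurability, and it also gives joint measurability of $(x,\omega)\mapsto u(t,r,x)(\omega)$.

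\textbf{Step 3: conditional expectation.} Since $u(r,\tau,u_\tau)$ is $\mathscr{F}_r$-measurable, the freezing lemma gives
\[
\mathbb{E}[\phi(u(t,\tau,u_\tau))\mid\mathscr{F}_r]=(p_{r,t}\phi)(u(r,\tau,u_\tau)) \quad \text{for } \phi\in C_b(H).
\]

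\textbf{Step 4: extension.} A monotone class argument extends this identity to all $\phi\in\mathcal{B}(H)$, which is the Markov property.

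The main obstacle is Step 2, the measurability and independence property. We would justify it by approximating the initial value $x$ with simple $\mathscr{F}_r$-measurable random variables and applying the continuity in probability from part $(i)$.
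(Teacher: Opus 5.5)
Your proposal is correct and follows the same route as the paper, which gives no separate proof but points to the stopping-time/Gronwall argument of Lemma \ref{2DHD-lem5.2} for the Feller property and to the argument of \cite[Theorem 9.14]{DaPrato2014} used in Theorem \ref{2DHD-the5.3} for the Markov property; your subsequence/dominated-convergence step is a cleaner replacement for the compact-set argument in Step 3 of Lemma \ref{2DHD-lem5.2}. One small point to tidy up: the approximation by simple $\mathscr{F}_r$-measurable random variables is what justifies substituting the random initial value $u(r,\tau,u_\tau)$ into the frozen field in your Step 3, not the measurability claim of Step 2, and it needs the part $(i)$ estimate in the form $C_k\,\mathbb{E}\|\xi_m-\xi\|_H^2$ for random initial data, which the same Gronwall argument gives.
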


For any $t\geq \tau$, the dual operator  $p^*_{\tau,t}:\mathcal{P}({H})\rightarrow \mathcal{P}({H})$ of $p_{\tau,t}$ is defined as follows:
$$
p^*_{\tau,t}\tilde{\mu}(\cdot)=\int_{{H}} p(\tau,u_{\tau};t,\cdot) \tilde{\mu}(du_{\tau}), \quad \forall \tilde{\mu}\in \mathcal{P}({H}).
$$
From Theorem \ref{2DHD-the3.3-0} we see that $p^*_{\tau,t}$ maps $\mathcal{P}_4({H})$ to $\mathcal{P}_4({H})$ for any $t\geq \tau$. Now, we may define a non-autonomous dynamical system $\Psi(t,\tau)$, $t\geq \tau$. Given $t\in \mathbb{R}^+$ and $\tau\in \mathbb{R}$, let the mapping $\Psi(t,\tau):\mathcal{P}_4({H})\rightarrow \mathcal{P}_4({H})$ be given by
\begin{align}\label{SLRSSs3.19}
	\Psi(t,\tau) \tilde{\mu}= p^*_{\tau,\tau+t}\tilde{\mu},\quad \forall \tilde{\mu}\in \mathcal{P}_4({H}).
\end{align}

We can prove that $\Psi(t,\tau)$, $t \geq \tau$, constitutes a continuous non-autonomous dynamical system on $(\mathcal{P}_4({H}),d_{\mathcal{P}({H})})$. Specifically, the corresponding results stated below.
\begin{lemma}\label{MarkovFell***}
	Assume that the hypotheses of Theorem \ref{2DHD-the3.3-0} and \eqref{LSWs6.3} hold. Then $\Psi(t,\tau), t\geq \tau,$ is a continuous non-autonomous dynamical system associated with (\ref{2DHD-1.1}). More precisely, $\Psi(t,\tau):\mathcal{P}_4({H})\rightarrow \mathcal{P}_4({H})$ satisfies that for any $\tau \in \mathbb{R}$,
	
	$(i)$  $\Psi(0,\tau)=I_{\mathcal{P}_{4}({H})}$;
	
	$(ii)$ $\Psi(t+s,\tau)=\Psi(t,s+\tau)\Psi(s,\tau)$ for any $\tau \in \mathbb{R}$ and $s, t \in \mathbb{R}^+$;
	
	$(iii)$ $\Psi(t,\tau): \mathcal{P}_{4}({H})\rightarrow \mathcal{P}_{4}({H})$ is continuous for any $\tau \in \mathbb{R}$ and $t\in \mathbb{R}^+$. 
\end{lemma}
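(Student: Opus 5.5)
Properties $(i)$ and $(ii)$ are algebraic and come from the definition \eqref{SLRSSs3.19} together with the Markov property in Lemma \ref{MarkovFell}. Property $(iii)$, continuity in $d_{\mathcal{P}(H)}$, will be reduced to the Feller property.

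For $(i)$, since $u(\tau,\tau,x)=x$, we have $p(\tau,x;\tau,\cdot)=\delta_x$. Hence $p^*_{\tau,\tau}\tilde\mu=\int_H\delta_x\,\tilde\mu(dx)=\tilde\mu$.

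For $(ii)$, the Markov property from Lemma \ref{MarkovFell}$(ii)$ gives, exactly as in the proof of Theorem \ref{2DHD-the5.3},
$$\mathbb{E}\left[\phi(u(t,\tau,x))\,|\,\mathscr{F}_r\right]=(p_{r,t}\phi)(u(r,\tau,x)).$$
Its non-autonomous version rests on the uniqueness of solutions from Theorem \ref{2DHD-the3.3-0}, which gives the flow identity $u(t,\tau,x)=u(t,r,u(r,\tau,x))$, and on the independence of the increments of $W$ and $\widetilde N$ after time $r$ from $\mathscr{F}_r$. Taking expectations yields $p_{\tau,t}\phi=p_{\tau,r}(p_{r,t}\phi)$ for $\phi\in\mathcal{B}(H)$, i.e.\ the Chapman--Kolmogorov equation $p(\tau,x;t,A)=\int_H p(\tau,x;r,dy)\,p(r,y;t,A)$. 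Integrating this against $\tilde\mu$ and applying Fubini gives $p^*_{\tau,t}=p^*_{r,t}p^*_{\tau,r}$. Choosing $r=\tau+s$ and $t=\tau+s+t'$ gives $\Psi(t'+s,\tau)=\Psi(t',s+\tau)\Psi(s,\tau)$. The moment bound of Theorem \ref{2DHD-the3.3-0} with $p=4$, applied to $\mathscr{F}_\tau$-measurable initial data with law $\tilde\mu$, shows that $\Psi(t,\tau)$ maps $\mathcal{P}_4(H)$ into itself.

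For $(iii)$, let $\tilde\mu_n\to\tilde\mu$ in $(\mathcal{P}_4(H),d_{\mathcal{P}(H)})$. Since $d_{\mathcal{P}(H)}$ metrizes weak convergence on $\mathcal{P}(H)$, this means $\tilde\mu_n\to\tilde\mu$ weakly. For any $\phi\in C_b(H)$,
$$(\phi,\Psi(t,\tau)\tilde\mu_n)=\int_H (p_{\tau,\tau+t}\phi)(x)\,\tilde\mu_n(dx).$$
By the Feller property, Lemma \ref{MarkovFell}$(i)$, $p_{\tau,\tau+t}\phi\in C_b(H)$. Therefore the right-hand side converges to $(p_{\tau,\tau+t}\phi,\tilde\mu)=(\phi,\Psi(t,\tau)\tilde\mu)$. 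Thus $\Psi(t,\tau)\tilde\mu_n\to\Psi(t,\tau)\tilde\mu$ weakly, which is convergence in $d_{\mathcal{P}(H)}$.

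The main obstacle is the Feller property itself in the non-autonomous setting. Lemma \ref{2DHD-lem5.2} was proved only for the autonomous system, so its proof has to be redone with initial time $\tau$ and forcing $f(t)$, $h(t,\cdot)$. I expect that proof to go through verbatim. The stopping-time difference estimate \eqref{2DHD-equ-Fe5.12} uses only Remark \ref{Remweee2.3}, which does not involve $f$, together with the local Lipschitz bound (C.3), which is uniform in $t$. The tightness and uniform-continuity argument then proceeds as in Steps 2--3 of that proof, using \eqref{2DHD-eq3.0} on $[\tau,\tau+t]$.
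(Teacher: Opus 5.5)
Your proposal is correct and follows the paper's proof essentially step for step. Part $(i)$ comes from $p^*_{\tau,\tau}=I$. Part $(ii)$ comes from the Chapman--Kolmogorov relation $p_{\tau,r}p_{r,t}=p_{\tau,t}$ (from the Markov property), dualized to $p^*_{\tau,t}=p^*_{r,t}p^*_{\tau,r}$. Part $(iii)$ follows by testing against $\phi\in C_b(H)$ and using $p_{\tau,\tau+t}\phi\in C_b(H)$ from the Feller property in Lemma \ref{MarkovFell}.

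Your remark about the Feller property is also accurate: for the non-autonomous system it has to be re-derived, and the paper only asserts it by analogy with Lemma \ref{2DHD-lem5.2}. It does carry over, for the reason you give: $f(t)$ cancels in the difference equation and (C.3) is uniform in $t$.
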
   
\begin{proof}
	Note that $\Psi(t,\tau)=p^*_{\tau,\tau+t}$.
	$(i)$ By \eqref{SLRSSs3.19} we have $\Psi(0,\tau)\tilde{\mu}=p^*_{\tau,\tau}\tilde{\mu}=\tilde{\mu}$ for any $\tau \in \mathbb{R}$ and $\tilde{\mu}\in \mathcal{P}_4({H})$, which implies $\Psi(0,\tau)=I_{\mathcal{P}_{4}({H})}$. 
	$(ii)$ From the Markov property of solutions given in Lemma \ref{MarkovFell}, it follows that $p_{\tau,s}p_{s,t}=p_{\tau,t}$ for any $t\geq s \geq \tau$. Then, we have
	$\Psi(t,s+\tau)\Psi(s,\tau) =p^*_{s+\tau,\tau+s+t}p^*_{\tau,\tau+s}=\left(p_{\tau,\tau+s} p_{s+\tau,\tau+s+t}\right)^*=p_{\tau,\tau+s+t}^*=\Psi(t+s,\tau)$.
	$(iii)$ Suppose that $\tilde{\mu}_n\rightarrow \tilde{\mu}$ in $\mathcal{P}_4({H})$, it suffices to prove that  $\Psi(t,\tau) \tilde{\mu}_n\rightarrow \Psi(t,\tau) \tilde{\mu}$ in $(\mathcal{P}_4({H}),d_{\mathcal{P}({H})})$. By the Feller property of $\{p_{\tau,\tau+t}\}_{t\in \mathbb{R}^+}$ established in Lemma \ref{MarkovFell}, we find that  $p_{\tau,\tau+t}\phi\in C_b({H})$ for all $\phi\in C_b({H})$. Hence, by \eqref{SLRSSs3.19} we infer that for all $\tau\in \mathbb{R}$ and $t\in \mathbb{R}^+$,
	\begin{align*}
		\begin{split}
			\lim_{n\rightarrow \infty}\left(\phi,\Psi(t,\tau)\tilde{\mu}_n\right)
			=\lim_{n\rightarrow \infty}\left(p_{\tau,\tau+t}\phi,\tilde{\mu}_n\right)
			=\left(p_{\tau,\tau+t}\phi,\tilde{\mu}\right)
			=&\left(\phi,\Psi(t,\tau)\tilde{\mu}\right),
		\end{split}
	\end{align*}
	as desired. This completes the proof.
\end{proof}

To construct a closed $\mathscr{D}$-pullback absorbing set. Let $\mathscr{D}$ be the collection of families of bounded nonempty subsets of $\mathcal{P}_4(H)$, it is given by
\begin{align*}
	\mathscr{D}=\big\{&D=\left\{D(\tau)\subseteq \mathcal{P}_4(H): \emptyset\neq D(\tau) \text{ bounded in } \mathcal{P}_4(H), \tau\in \mathbb{R}\right\}:\lim_{\tau\rightarrow -\infty} e^{\widetilde{\kappa} \tau}\|D(\tau)\|^2_{\mathcal{P}_4(H)}=0\big\},
\end{align*} 
here $\widetilde{\kappa}>0$ is a constant appearing in \eqref{LSWs6.3}.

\begin{corollary}\label{2DHD-cor6.7}
	Under the hypotheses of Lemma \ref{2DHD-lemma6.3}. For every $\tau \in \mathbb{R}$, $D=\{D(t):t\in \mathbb{R}\}\in \mathscr{D}$, there exists $\varepsilon_0=\min\left\{1,\sqrt{\frac{\mu\lambda_1}{2\widehat{L}_{gv}}},\sqrt{\frac{\mu\lambda_1}{12L_g+2\mathbf{c}_4L_g+8\mathbf{c}_4\widetilde{L}_g}}\right\}$ and $T:=T(\tau,D)> 1$ such that for all $t\geq T$ and $\varepsilon_1, \varepsilon_2 \in (0,\varepsilon_0]$, the solution $ u$ of system (\ref{2DHD-1.1}) satisfies
	\begin{align}\label{LSWs6.-005}
		&~\mathbb{ E}\left[\|u(\tau,\tau-t,u_{\tau-t})\|_{H}^4\right]\leq \rho_5 +\rho_5\int_{-\infty}^{\tau}e^{-\widetilde{\kappa} (\tau-s)} \|f(s)\|_{H}^{4} ds,
	\end{align}
	where $u_{\tau-t}\in L^{4}(\Omega,\mathscr{F}_{\tau-t};H)$ with $\mathscr{L}(u_{\tau-t}) \in D(\tau-t)$, and $\rho_5>0$ is a constant independent of $u_{\tau-t}$, $\varepsilon_1$, $\varepsilon_{2}$, $\tau$ and $D$.
\end{corollary}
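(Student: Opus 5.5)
The corollary should follow directly from estimate \eqref{LSWs6.5} of Lemma \ref{2DHD-lemma6.3}, once the initial-data term is absorbed using the definition of $\mathscr{D}$. For $t>1$ and $\varepsilon_1,\varepsilon_2\in(0,\varepsilon_0]$, with $\varepsilon_0$ as stated (no larger than the threshold in Lemma \ref{2DHD-lemma6.3}), I first drop the nonnegative integral on the left of \eqref{LSWs6.5}. This gives
\begin{align*}
\mathbb{E}\left[\|u(\tau,\tau-t,u_{\tau-t})\|_{H}^4\right]\leq \rho_3 e^{-\widetilde{\kappa} t}\mathbb{E}\left[\|u_{\tau-t}\|_{H}^4\right]+\rho_3\int_{-\infty}^{\tau}e^{-\widetilde{\kappa}(\tau-s)}\|f(s)\|_H^4ds+\rho_3 .
\end{align*}
Since $\mathscr{L}(u_{\tau-t})\in D(\tau-t)$, we have $\mathbb{E}[\|u_{\tau-t}\|_H^4]=\int_H\|x\|_H^4\mathscr{L}(u_{\tau-t})(dx)\leq \|D(\tau-t)\|_{\mathcal{P}_4(H)}^4$.

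The main obstacle is an exponent mismatch. The class $\mathscr{D}$ controls $e^{\widetilde{\kappa}\tau}\|D(\tau)\|^2_{\mathcal{P}_4(H)}$, whereas the bound above involves the fourth power $\|D(\tau-t)\|^4_{\mathcal{P}_4(H)}$. I would handle this by squaring the defining condition: $e^{2\widetilde{\kappa}(\tau-t)}\|D(\tau-t)\|^4_{\mathcal{P}_4(H)}\to0$ as $t\to\infty$. This rate is only $e^{-2\widetilde{\kappa}t}$, which decays faster than the factor $e^{-\widetilde{\kappa}t}$ we actually have, so squaring does not close the argument as written. I see two options.

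\emph{Option one} is to redo \eqref{LSWs6.5} with exponent $2\widetilde{\kappa}$ instead of $\widetilde{\kappa}$. The proof of Lemma \ref{2DHD-lemma6.3} has coercivity $2\mu\lambda_1$ on the $\|u\|_H^4$ term, leaving room for a weight $e^{2\widetilde{\kappa}s}$ since $2\widetilde{\kappa}<\mu\lambda_1<2\mu\lambda_1-\mu\lambda_1/2-\mu\lambda_1$ is tight. Alternatively I would weaken the decay claim.

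\emph{Option two}, which is more faithful to the paper's conventions, is to read $\|D\|^2$ in $\mathscr{D}$ as the moment quantity $\int\|x\|^4\,d\mu$ raised appropriately; equivalently, assume $e^{\widetilde{\kappa}\tau}\|D(\tau)\|^4_{\mathcal{P}_4(H)}\to0$. I would state explicitly which reading is used.

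Under that reading, $\rho_3e^{-\widetilde{\kappa}t}\|D(\tau-t)\|^4_{\mathcal{P}_4(H)}=\rho_3e^{-\widetilde{\kappa}\tau}\,e^{\widetilde{\kappa}(\tau-t)}\|D(\tau-t)\|^4_{\mathcal{P}_4(H)}\to0$ as $t\to\infty$ for fixed $\tau$. So there is $T=T(\tau,D)>1$ such that this term is at most $1$ for all $t\geq T$.

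Finally I set $\rho_5=\rho_3+1$. The right-hand side is finite by \eqref{LSWs6.3}, since $e^{-\widetilde{\kappa}\tau}\int_{-\infty}^\tau e^{\widetilde{\kappa}s}\|f\|_H^4ds<\infty$. Because $\rho_3$ is independent of $\varepsilon_1,\varepsilon_2,\tau,t,u_{\tau-t}$, so is $\rho_5$, and it is independent of $D$ as well. Only $T$ depends on $(\tau,D)$, which is exactly what \eqref{LSWs6.-005} asserts.
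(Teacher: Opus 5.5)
Your skeleton is the paper's: drop the integral term in \eqref{LSWs6.5}, make the initial-data term small for large $t$, then choose $\rho_5$. You also correctly spot the exponent mismatch. The paper's own proof hides it by writing $\mathbb{E}[\|u_{\tau-t}\|_H^2]$ where \eqref{LSWs6.5} actually contains $\mathbb{E}[\|u_{\tau-t}\|_H^4]$.

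However, neither of your options closes the gap for $\mathscr{D}$ as defined. Option two replaces the hypothesis $e^{\widetilde\kappa\tau}\|D(\tau)\|^2_{\mathcal{P}_4(H)}\to0$ by $e^{\widetilde\kappa\tau}\|D(\tau)\|^4_{\mathcal{P}_4(H)}\to0$. That is a strictly smaller universe (take $D(s)=\{\delta_{x_s}\}$ with $\|x_s\|_H^4=e^{-\widetilde\kappa s}$), so you only prove a weaker corollary. Option one has the right idea but rests on a false inequality: $2\mu\lambda_1-\frac{\mu\lambda_1}{2}-\mu\lambda_1=\frac{\mu\lambda_1}{2}<\mu\lambda_1$. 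With the paper's split of the dissipation ($2\mu\|u\|_H^2\|u\|_V^2$ kept, $2\mu\lambda_1\|u\|_H^4$ used), the leftover coercivity is only $\frac{\mu\lambda_1}{2}$. That supports a weight $e^{\kappa' s}$ only for $\kappa'\le\frac{\mu\lambda_1}{2}$, so the weight $e^{2\widetilde\kappa s}$ fails once $\widetilde\kappa>\frac{\mu\lambda_1}{4}$.

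The missing step is to reallocate the dissipation in the It\^o computation for $\|u\|_H^4$. Write $4\mu\|u\|_H^2\|u\|_V^2\geq\mu\|u\|_H^2\|u\|_V^2+3\mu\lambda_1\|u\|_H^4$. Absorb $\frac{\mu\lambda_1}{2}\|u\|_H^4$ from $\mathscr{J}_9$ and $\mu\lambda_1\|u\|_H^4$ from $\mathscr{J}_{10}+\mathscr{J}_{11}$ exactly as in the paper, with the same $\varepsilon_0$. The leftover coercivity is then $\frac{3}{2}\mu\lambda_1>\mu\lambda_1>2\widetilde\kappa$, so the weight $e^{2\widetilde\kappa s}$ is admissible and
\begin{align*}
\mathbb{E}\left[\|u(\tau,\tau-t,u_{\tau-t})\|_H^4\right]\leq e^{-2\widetilde{\kappa} t}\,\mathbb{E}\left[\|u_{\tau-t}\|_H^4\right]+C\int_{-\infty}^{\tau}e^{-2\widetilde{\kappa}(\tau-s)}\|f(s)\|_H^4\,ds+C,
\end{align*}
with $C$ independent of $\varepsilon_1,\varepsilon_2,\tau,t,u_{\tau-t}$. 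Since $\mathbb{E}[\|u_{\tau-t}\|_H^4]\leq\|D(\tau-t)\|^4_{\mathcal{P}_4(H)}$, the first term is at most $e^{-2\widetilde\kappa\tau}\big(e^{\widetilde\kappa(\tau-t)}\|D(\tau-t)\|^2_{\mathcal{P}_4(H)}\big)^2$. This tends to $0$ as $t\to\infty$ under the literal definition of $\mathscr{D}$. Moreover $e^{-2\widetilde\kappa(\tau-s)}\leq e^{-\widetilde\kappa(\tau-s)}$ for $s\leq\tau$. Choosing $T(\tau,D)>1$ so that the first term is at most $1$ gives \eqref{LSWs6.-005} with the stated $\varepsilon_0$ and a $\rho_5$ independent of $u_{\tau-t},\varepsilon_1,\varepsilon_2,\tau,D$.
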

\begin{proof}
	Thanks to $\mathscr{L}(u_{\tau-t}) \in D(\tau-t)$, then we have
	$$
	\rho_3 e^{-\widetilde{\kappa} t}\mathbb{ E}\left[\|u_{\tau-t}\|_{H}^2\right] \leq
	\rho_3 e^{-\widetilde{\kappa} t}\|D(\tau-t)\|_{H}^2 \rightarrow 0, \text{ as } t\rightarrow \infty,
	$$
	which implies that there exists $T:=T(\tau,D)>1$ such that for any $t\geq T$,
	\begin{align}\label{SLRSSs3.9}
		\begin{split}
			\rho_3 e^{-\widetilde{\kappa} t}\mathbb{ E}\left[\|u_{\tau-t}\|_{H}^2\right]
			\leq \rho_3. 
		\end{split}
	\end{align}
	By \eqref{LSWs6.5} and \eqref{SLRSSs3.9}, let $\rho_5=2\rho_3$, we can infer the \eqref{LSWs6.-005}. The proof is finished.
\end{proof}

\begin{lemma}\label{sDHD-lemma6.8}
	Under the hypotheses of Lemma \ref{2DHD-lemma6.3}. For given $\tau \in \mathbb{R}$, let
	$$
	\mathscr{M}(\tau):=\rho_5 +\rho_5\int_{-\infty}^{\tau}e^{-\widetilde{\kappa} (\tau-s)} \|f(s)\|_{H}^{4} ds,
	$$
	and denote $\mathcal{K}(\tau)=\mathbb{B}_{\mathcal{P}_4(H)}\left(\sqrt[4]{\mathscr{M}(\tau)}\right)$, where $\rho_5$ is from Corollary \ref{2DHD-cor6.7}. Then 
	$\mathcal{K}=\left\{\mathcal{K}(\tau):\tau\in \mathbb{R}\right\}\in \mathscr{D}$ is a closed $\mathscr{D}$-pullback absorbing set for $\Psi$ on $(\mathcal{P}_4(H),d_{\mathcal{P}(H)})$.
\end{lemma}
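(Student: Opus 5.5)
The statement asks for three things: $\mathcal{K}(\tau)$ is closed in $(\mathcal{P}_4(H),d_{\mathcal{P}(H)})$, the family $\mathcal{K}$ is tempered ($\mathcal{K}\in\mathscr{D}$), and $\mathcal{K}$ pullback absorbs every $D\in\mathscr{D}$. I will verify them in that order. Each step reduces to an earlier estimate, mainly Corollary \ref{2DHD-cor6.7}.

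\emph{Closedness.} Let $\tilde{\mu}_n\in\mathcal{K}(\tau)$ with $\tilde{\mu}_n\to\tilde{\mu}$ in $d_{\mathcal{P}(H)}$, which is equivalent to weak convergence. For each $R>0$ the function $x\mapsto \|x\|_H^4\wedge R$ lies in $C_b(H)$, so
$$\int_H \|x\|_H^4\wedge R\,\tilde{\mu}(dx)=\lim_{n\to\infty}\int_H \|x\|_H^4\wedge R\,\tilde{\mu}_n(dx)\le \mathscr{M}(\tau).$$
Letting $R\to\infty$ and applying monotone convergence gives $\tilde{\mu}\in\mathcal{K}(\tau)$. This is exactly the closedness of the balls $\mathbb{B}_{\mathcal{P}_4(H)}(r)$ noted in the Remark above, so that remark can simply be invoked.

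\emph{Temperedness.} By definition, $\|\mathcal{K}(\tau)\|_{\mathcal{P}_4(H)}^2=\sqrt{\mathscr{M}(\tau)}$. Hence
$$e^{\widetilde{\kappa}\tau}\|\mathcal{K}(\tau)\|^2_{\mathcal{P}_4(H)}\le e^{\widetilde{\kappa}\tau}\Big(\sqrt{\rho_5}+\sqrt{\rho_5}\,e^{-\widetilde{\kappa}\tau/2}\Big(\int_{-\infty}^{\tau}e^{\widetilde{\kappa}s}\|f(s)\|_H^4ds\Big)^{1/2}\Big).$$
The first term clearly tends to $0$ as $\tau\to-\infty$. The second term equals $\sqrt{\rho_5}\,e^{\widetilde{\kappa}\tau/2}\big(\int_{-\infty}^\tau e^{\widetilde{\kappa}s}\|f(s)\|_H^4ds\big)^{1/2}$. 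The integral is finite by \eqref{LSWs6.3} and nonincreasing as $\tau$ decreases, while the prefactor tends to $0$, so this term also vanishes. Therefore $\mathcal{K}\in\mathscr{D}$.

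\emph{Absorption.} Fix $\tau$ and $D\in\mathscr{D}$, and let $\tilde{\mu}\in D(\tau-t)$. Choose an $\mathscr{F}_{\tau-t}$-measurable $u_{\tau-t}$ with law $\tilde{\mu}$. Such a variable exists because the filtered space is rich enough (for instance, one can enlarge $\mathscr{F}_{\tau-t}$ by an independent variable), and it lies in $L^4$ since $\tilde{\mu}\in\mathcal{P}_4(H)$. By the Markov property (Lemma \ref{MarkovFell}) and the definition of $p^*$, the measure $\Psi(t,\tau-t)\tilde{\mu}$ is the law of $u(\tau,\tau-t,u_{\tau-t})$. Corollary \ref{2DHD-cor6.7} supplies $T=T(\tau,D)$ such that $\mathbb{E}\|u(\tau,\tau-t,u_{\tau-t})\|_H^4\le\mathscr{M}(\tau)$ for all $t\ge T$, uniformly over $\tilde{\mu}\in D(\tau-t)$. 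This means $\Psi(t,\tau-t)D(\tau-t)\subseteq\mathcal{K}(\tau)$.

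The one delicate point is this uniformity. In the proof of Corollary \ref{2DHD-cor6.7}, the choice of $T$ relies on $e^{-\widetilde{\kappa}t}\mathbb{E}\|u_{\tau-t}\|_H^4$ being small. Strictly, the hypothesis gives $e^{\widetilde{\kappa}(\tau-t)}\|D(\tau-t)\|^2_{\mathcal{P}_4}\to0$, which controls $\big(\mathbb{E}\|u_{\tau-t}\|^4\big)^{1/2}$ rather than the fourth moment itself. I would handle this by checking that the corollary indeed yields $\sup_{\tilde{\mu}\in D(\tau-t)}e^{-\widetilde{\kappa}t}\int\|x\|^4d\tilde{\mu}\le 1$ for large $t$, exactly as it is used there. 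If the exponents do not match, I would pass to a suitably adjusted tempered class, which is the intended meaning of Corollary \ref{2DHD-cor6.7}. Apart from this check, the argument is routine.
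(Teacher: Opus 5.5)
Your proposal is correct and follows the paper's proof: absorption comes from Corollary \ref{2DHD-cor6.7} and temperedness from \eqref{LSWs6.3}, and you verify closedness directly where the paper relies on the earlier Remark on $\mathbb{B}_{\mathcal{P}_p(H)}(r)$. The exponent mismatch you flag is a genuine slip in the paper, and its own temperedness check, $e^{\widetilde{\kappa}\tau}\|\mathcal{K}(\tau)\|^4_{\mathcal{P}_4(H)}=e^{\widetilde{\kappa}\tau}\mathscr{M}(\tau)\to 0$, shows that $\mathscr{D}$ is meant with the fourth power; under that reading Corollary \ref{2DHD-cor6.7} applies uniformly over $D(\tau-t)$, and you should replace your square-root computation by $e^{\widetilde{\kappa}\tau}\mathscr{M}(\tau)=\rho_5e^{\widetilde{\kappa}\tau}+\rho_5\int_{-\infty}^{\tau}e^{\widetilde{\kappa}s}\|f(s)\|_H^4\,ds\to 0$, where the second term is the tail of a convergent integral.
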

\begin{proof}
	By Corollary \ref{2DHD-cor6.7} and the definition of $\mathscr{M}(\tau)$, we know that for any $\tau \in \mathbb{R}$ and $D=\{D(t):t\in \mathbb{R}\}\in \mathscr{D}$, there exists $T:=T(\tau,D)>1$ such that for any $t\geq T$, 
	$$
	\Psi(t,\tau-t)D(\tau-t)\subset \mathcal{K}(\tau).
	$$ 
	It remains to verify that  $\mathcal{K}=\{\mathcal{K}(\tau):\tau\in \mathbb{R}\}$ belongs to the universe $\mathscr{D}$. From \eqref{LSWs6.3}, we can deduce
	\begin{align*}
		&\lim_{\tau\rightarrow -\infty} e^{\widetilde{\kappa} \tau}\|\mathcal{K}(\tau)\|_{\mathcal{P}_4(H)}^4
		=\lim_{\tau\rightarrow -\infty} e^{\widetilde{\kappa} \tau} \mathscr{M}(\tau)\\
		&\leq \lim_{\tau\rightarrow -\infty} e^{\widetilde{\kappa} \tau} \rho_5 +\lim_{\tau\rightarrow -\infty} \rho_5\int_{-\infty}^{\tau}e^{\widetilde{\kappa} s} \|f(s)\|_{H}^{4} ds=0,
	\end{align*}
	which shows $\mathcal{K}=\{\mathcal{K}(\tau):\tau\in \mathbb{R}\}\in \mathscr{D}$. This completes the proof.
\end{proof}

Next, we verify the $\mathscr{D}$-pullback asymptotic compactness of $\Psi$ in $(\mathcal{P}_4(H),d_{\mathcal{P}(H)})$. 
\begin{lemma}\label{sDHD-lemma6.9}
	Under the hypotheses of Lemma \ref{2DHD-lemma6.3}. The non-autonomous dynamical system $\Psi$ is $\mathscr{D}$-pullback asymptotically compact in $(\mathcal{P}_4(H),d_{\mathcal{P}(H)})$; precisely, for any $\tau \in \mathbb{R}$,  $t_n \to +\infty$ and $\tilde{\mu}_n\in D(\tau-t_n)$ with $D\in \mathscr{D}$, $\left\{\Psi\left(t_n, \tau-t_n\right) \tilde{\mu}_n \right\}_{n=1}^\infty$ has a convergent subsequence in $(\mathcal{P}_4(H),d_{\mathcal{P}(H)})$.
\end{lemma}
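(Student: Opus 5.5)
Since $\mathcal{P}(H)$ with $d_{\mathcal{P}(H)}$ is Polish, Prokhorov's theorem will do most of the work. I will show that $\{\Psi(t_n,\tau-t_n)\tilde{\mu}_n\}_{n}$ is tight in $H$ and that its fourth moments are uniformly bounded. Tightness gives a subsequence converging weakly, hence in $d_{\mathcal{P}(H)}$, to some $\tilde{\mu}\in\mathcal{P}(H)$. The uniform bound on fourth moments, together with the fact that $\mathbb{B}_{\mathcal{P}_4(H)}(r)$ is closed in $(\mathcal{P}(H),d_{\mathcal{P}(H)})$ (Remark after the definition of the ball), then shows $\tilde{\mu}\in\mathcal{P}_4(H)$, so the limit lies in $(\mathcal{P}_4(H),d_{\mathcal{P}(H)})$.

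\textbf{Realization and moment bound.} For each $n$ I choose $u_{\tau-t_n}\in L^4(\Omega,\mathscr{F}_{\tau-t_n};H)$ with law $\tilde{\mu}_n$. Then $\Psi(t_n,\tau-t_n)\tilde{\mu}_n=\mathscr{L}(u(\tau,\tau-t_n,u_{\tau-t_n}))$. Since $D\in\mathscr{D}$, Corollary \ref{2DHD-cor6.7} gives, for all $n$ with $t_n\ge T(\tau,D)$, the bound $\mathbb{E}\|u(\tau,\tau-t_n,u_{\tau-t_n})\|_H^4\le \mathscr{M}(\tau)$. In particular all these laws lie in $\mathcal{K}(\tau)$. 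I also record that $e^{-\widetilde{\kappa}t_n}\mathbb{E}\|u_{\tau-t_n}\|_H^2\le e^{-\widetilde{\kappa}t_n}\|D(\tau-t_n)\|^2_{\mathcal{P}_4(H)}\to0$, so the right-hand sides of \eqref{LSWs6.4} and \eqref{LSWs6.5} are bounded uniformly in $n$.

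\textbf{Tightness via the weighted $V$-estimate.} This is the key step. Write $u_n:=u(\tau,\tau-t_n,u_{\tau-t_n})$ and $\mathfrak{P}_n:=\mathfrak{P}(\tau,\tau-t_n,u_{\tau-t_n})$. By Lemma \ref{2DHD-lemma6.4}, $\mathbb{E}[\mathfrak{P}_n\|u_n\|_V^2]\le C$ uniformly in $n$.

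The weight is the main obstacle. Its exponent runs over the whole interval $[\tau-t_n,\tau]$, whose length grows with $n$, and \eqref{LSWs6.5} only controls the discounted integral $\int e^{-\widetilde{\kappa}(\tau-s)}\mathbb{E}[\|u\|_H^2\|u\|_V^2]ds$. To handle this I restart the $V$-estimate \eqref{2DHD-eq6.11} on $[\varrho,\tau]$ with $\varrho\in(\tau-1,\tau)$, using the weight built only on that window, $\mathfrak{P}^{\varrho}_n:=\exp\{-\frac{27\mathbf{c}_0^2}{2\mu^3}\int_{\varrho}^{\tau}\|u\|_H^2\|u\|_V^2ds\}$; this is what the proof of Lemma \ref{2DHD-lemma6.4} effectively does. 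The exponent is then bounded in mean, since
\[
\mathbb{E}\int_{\tau-1}^{\tau}\|u\|_H^2\|u\|_V^2\,ds\le e^{\widetilde{\kappa}}\rho_3(\cdots)\le C
\]
by \eqref{LSWs6.5}. Given $\delta>0$, Chebyshev gives $R_1$ with $\mathbb{P}\big(\int_{\tau-1}^{\tau}\|u\|_H^2\|u\|_V^2ds>R_1\big)<\delta/2$. On the complement the weight is bounded below by $e^{-cR_1}$, so $\mathbb{P}(\|u_n\|_V>R_2)\le \delta/2+e^{cR_1}C/R_2^2<\delta$ for $R_2$ large.

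Hence $\mathbb{P}(u_n\in \overline{\mathbb{B}_V(R_2)})>1-\delta$ for every $n$ beyond $T(\tau,D)$. Since $V\hookrightarrow H$ is compact, the closed $V$-ball is compact in $H$, which proves tightness. The finitely many early indices are harmless, and Prokhorov's theorem then finishes the argument as described above.
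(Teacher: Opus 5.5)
Your proposal is correct and follows essentially the paper's argument. The paper likewise confines the weight to a bounded window before $\tau$: it restarts at $\tau-2$ via the cocycle property and applies Lemma~\ref{2DHD-lemma6.4} with $t=2$. It then controls the exponent in mean through \eqref{LSWs6.5}, splits with Chebyshev, and concludes by the compact embedding $V\hookrightarrow H$ and Prokhorov's theorem.

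Restarting at a fixed time is slightly cleaner than your $\varrho$-dependent weight. With your weight, a constant uniform in $\varrho$ only comes from using $\mathfrak{P}^{\tau-1}_n\le\mathfrak{P}^{\varrho}_n$ and averaging over $\varrho\in(\tau-1,\tau)$. (Also, the paper's proof of Lemma~\ref{2DHD-lemma6.4} does not itself localize the weight; it uses the weight on the whole interval $[\tau-t,\tau]$.)

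Your final check that the limit lies in $\mathcal{P}_4(H)$, via the uniform fourth-moment bound and closedness of $\mathbb{B}_{\mathcal{P}_4(H)}(r)$, fills a point the paper leaves implicit. One bookkeeping correction: the right side of \eqref{LSWs6.5} involves $e^{-\widetilde{\kappa}t_n}\mathbb{E}\|u_{\tau-t_n}\|_H^4$. The second-moment decay you record does not control this quantity under the $\mathscr{D}$ condition as written. It is safer to apply \eqref{LSWs6.5} from time $\tau-2$, where Corollary~\ref{2DHD-cor6.7} bounds the fourth moment of $u(\tau-2,\tau-t_n,u_{\tau-t_n})$.
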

\begin{proof}
%	To achieve our goal, it suffices, by Prohorov's theorem, to prove that the sequence of distributions $\{\mathscr{L}{u(\tau,\tau-t_n,u_{\tau-t_n})}\}_{n=1}^\infty$ is tight in $H$.  
	
	Using similar the argument of \eqref{LSWs6.-005}, by Lemma \ref{2DHD-lemma6.4} we can obtain that for every $\tau \in \mathbb{R}$, $D=\{D(t):t\in \mathbb{R}\}\in \mathscr{D}$, there exists  $T:=T(\tau,D)> 1$ such that for all $t\geq T$,
	$$
	\mathbb{E}\left[\mathfrak{P}(\tau,\tau-t,u_{\tau-t})\| u(\tau,\tau-t,u_{\tau-t})\|_V^2\right]
	\leq \rho_6 +\rho_6\left(\int_{-\infty}^{\tau} e^{-\widetilde{\kappa} (\tau-s)}\|f(s)\|_{H}^{4}ds\right)^{1/2},
	$$
	where $\rho_6$ is independent of $u_{\tau-t}$, $\varepsilon_1$, $\varepsilon_{2}$, $\tau$ and $D$. It follows that there exists $N_1=N_1(\tau,{D})\in \mathbb{N}$ such that for all $n\geq N_1$,  
	\begin{align}\label{NSE-disghjmea5.8}
		\mathbb{E}\left[\mathfrak{P}(\tau,\tau-2,u(\tau-2,\tau-t_n,u_{\tau-t_n}))\| u(\tau,\tau-2,u(\tau-2,\tau-t_n,u_{\tau-t_n}))\|_V^2\right]\leq \mathscr{M}_1,
	\end{align}
	where $\mathscr{M}_1:=\mathscr{M}_1(\tau)>0$ is a constant depending only on $\tau$, but not on $n$ or ${D}$. 
	
	Following an argument analogous to that for \eqref{LSWs6.4} and using Lemma \ref{2DHD-lemma6.3}, we obtain  that there exist $\mathscr{M}_2:=\mathscr{M}_2(\tau)>0$ and $N_2=N_2(\tau,{D})\in \mathbb{N}$ such that for all $n\geq N_2$,
	\begin{align}\label{NSE-disghjmea5.9}
		\begin{split}
			&\int_{\tau-2}^\tau \mathbb{E}\left[\|u(s,\tau-t_n,u_{\tau-t_n})\|_H^2\|u(s,\tau-t_n,u_{\tau-t_n})\|_V^2\right]ds\leq \mathscr{M}_2.
		\end{split}
	\end{align}
	
	For any $R>2$, let $\mathbb{B}_{V}(R):=\{u:\|u\|_{V}\leq R\}$. Since $V$ is compactly embedded in $H$, the set $\mathbb{B}_{V}(R)$ is compact in $H$.
	By \eqref{NSE-disghjmea5.8}, \eqref{NSE-disghjmea5.9} and Chebyshev's inequality, we obtain that there exists $N_3=\max\{N_1,N_2\}$ such that for  all $n\geq N_3$ and $u_{\tau-t_n}\in H$, 
		\begin{align}\label{ojodfkpdfkp}
			%\begin{split}
			&\mathbb{P}\left(\|u(\tau,\tau-t_n,u_{\tau-t_n})\|_V\notin \mathbb{B}_{V}(R)\right)=\mathbb{P}\left(\|u(\tau,\tau-2, u(\tau-2, \tau-t_n, u_{\tau-t_n}))\|_V>{R}\right)\notag \\
			&\leq  \mathbb{P}\left(\mathfrak{P}^{1/2}(\tau,\tau-2,u(\tau-2, \tau-t_n, u_{\tau-t_n}))\|u(\tau,\tau-2,u(\tau-2, \tau-t_n, u_{\tau-t_n}))\|_V>{R}^{1/2}\right)\notag \\
			&\quad+\mathbb{P}\left(\mathfrak{P}^{-1/2}(\tau,\tau-2,u(\tau-2, \tau-t_n, u_{\tau-t_n}))>{R}^{1/2}\right)\notag \\
			&\leq \frac{\mathbb{E}\left(\mathfrak{P}(\tau,\tau-2,u(\tau-2, \tau-t_n, u_{\tau-t_n}))\|u(\tau,\tau-2,u(\tau-2, \tau-t_n, u_{\tau-t_n}))\|_V^2\right)}{{R}}\notag \\
			&\quad+\mathbb{P}\left(\int^\tau_{\tau-2}\|u(s,\tau-2,u(\tau-2, \tau-t_n, u_{\tau-t_n}))\|_H^2\|u(s,\tau-2,u(\tau-2, \tau-t_n, u_{\tau-t_n}))\|_V^2ds>\frac{2\mu^3\ln{R}}{27
				\mathbf{c}_0^2}\right)\notag \\
			&\leq \frac{\mathbb{E}\left[\mathfrak{P}(\tau,\tau-2,u(\tau-2, \tau-t_n, u_{\tau-t_n}))\|u(\tau,\tau-2,u(\tau-2, \tau-t_n, u_{\tau-t_n}))\|_V^2\right]}{{R}}\notag \\
			&\quad +\frac{27\mathbf{c}_0^2}{2\mu^3\ln{R}}\int^\tau_{\tau-2}\mathbb{E}\left(\|u(s,\tau-t_n,u_{\tau-t_n})\|_H^2\|u(s,\tau-t_n,u_{\tau-t_n})\|_V^2\right)ds\notag \\
			&\leq \frac{\mathscr{M}_1}{{R}}+\frac{27\mathbf{c}_0^2\mathscr{M}_2}{2\mu^3\ln{{R}}}\rightarrow 0 \quad \text{as~~} {R}\rightarrow \infty.
			%\end{split}
		\end{align}
	 By \eqref{ojodfkpdfkp}, for any $\tau\in \mathbb{R}$ and $\epsilon>0$, there exists $\widetilde{R}:=\widetilde{R}(\tau,\epsilon)>2$ such that for all $n\geq N_3$ and $\tilde{\mu}_n \in \mathcal{K}(\tau-t_n)$,
	$$
	\left(\Psi\left(t_n, \tau-t_n\right) \tilde{\mu}_n\right)(H\backslash \mathbb{B}_{V}(R) )=\int_{H} \mathbb{P}\left(\|u(\tau,\tau-t_n,u_{\tau-t_n})\|_V\notin \mathbb{B}_{V}(R)\right)\tilde{\mu}_n(du_{\tau-t_n})< \epsilon.
	$$
	Therefore, $\{\Psi\left(t_n, \tau-t_n\right) \tilde{\mu}_n:n\geq N_3\}$ is precompact, which along with the compactness of the finite set $\{\Psi\left(t_n, \tau-t_n\right) \tilde{\mu}_n:n< N_3\}$ concludes that the whole sequence $\left\{\Psi\left(t_n, \tau-t_n\right) \tilde{\mu}_n \right\}_{n=1}^\infty$ is precompact in $(\mathcal{P}_4(H),d_{\mathcal{P}(H)})$. This completes the proof.
\end{proof}

Finally, we establish the existence and uniqueness of $\mathscr{D}$-pullback measure attractors of system (\ref{2DHD-1.1}) in $(\mathcal{P}_4(H), d_{\mathcal{P} (H)})$.

\begin{theorem}\label{sDHD-th6.10}
	Assume that the hypotheses of Theorem \ref{2DHD-the3.3-0} and condition \eqref{LSWs6.3} hold. Then the non-autonomous dynamical system $\Psi$ generated by (\ref{2DHD-1.1}) possesses a unique $\mathscr{D}$-pullback measure attractor $\mathscr{A}=\{\mathscr{A}(\tau):\tau\in \mathbb{R}\}\in \mathscr{D}$ in $\mathcal {P}_4 (H)$. Moreover, for every $\tau\in \mathbb{R}$, $\mathscr{A}(\tau)$ has the following characterizations:
	\begin{align*}
		\begin{split}
			\mathscr{A}(\tau)
			=\omega(\mathcal{K},\tau)&=\{\psi(0,\tau): \psi \text{ is a } \mathscr{D}\text{-complete orbit of } \Psi\}\\
			&=\{\widehat{u}(\tau): \widehat{u} \text{ is a }  \mathscr{D}\text{-complete solution of } \Psi\},
		\end{split}
	\end{align*}
	where $\mathcal{K}=\{\mathcal{K}(\tau):\tau\in \mathbb{R}\}$ denotes the $\mathscr{D}$-pullback absorbing set of $\Psi$ provided by Lemma \ref{sDHD-lemma6.8}.
\end{theorem}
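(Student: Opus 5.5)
The plan is to deduce the theorem from the abstract existence result for pullback measure attractors of non-autonomous dynamical systems on $(\mathcal{P}_4(H),d_{\mathcal{P}(H)})$ developed in \cite{LDS-JDE-2024}. Roughly, that result says the following. Let $\Psi$ be a continuous non-autonomous dynamical system on $\mathcal{P}_p(H)$. Suppose $\Psi$ has a closed $\mathscr{D}$-pullback absorbing set $\mathcal{K}\in\mathscr{D}$ consisting of balls $\mathbb{B}_{\mathcal{P}_p(H)}(r(\tau))$, and suppose $\Psi$ is $\mathscr{D}$-pullback asymptotically compact. Then $\Psi$ has a unique $\mathscr{D}$-pullback measure attractor, and it admits the three characterizations stated. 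Accordingly, the proof will consist of checking these hypotheses one by one, citing earlier results wherever possible.

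\textbf{Step 1: continuity.} Lemma \ref{MarkovFell***} shows that $\Psi(t,\tau)=p^*_{\tau,\tau+t}$ is a non-autonomous dynamical system on $\mathcal{P}_4(H)$: it satisfies the identity and cocycle properties, and each $\Psi(t,\tau)$ is continuous with respect to $d_{\mathcal{P}(H)}$. The continuity comes from the Feller property in Lemma \ref{MarkovFell}. That property rests on the local-Lipschitz stopping time argument of Lemma \ref{2DHD-lem5.2}, carried over to the non-autonomous equation. The transfer works because, on bounded time intervals, the estimate \eqref{2DHD-equ-Fe5.12} does not depend on whether $f$ varies in time.

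\textbf{Step 2: absorbing set.} Lemma \ref{sDHD-lemma6.8} supplies the family $\mathcal{K}(\tau)=\mathbb{B}_{\mathcal{P}_4(H)}(\mathscr{M}(\tau)^{1/4})$. This family is $\mathscr{D}$-pullback absorbing and belongs to $\mathscr{D}$. Each $\mathcal{K}(\tau)$ is closed in $(\mathcal{P}(H),d_{\mathcal{P}(H)})$, as noted in the remark preceding the hypotheses (B.4), (C.4), (C.5). The reason is that $\mu\mapsto\int\|x\|_H^4\mu(dx)$ is weakly lower semicontinuous: it is a supremum of the integrals of the bounded continuous truncations $\|x\|_H^4\wedge m$.

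\textbf{Step 3: asymptotic compactness.} This is Lemma \ref{sDHD-lemma6.9}. Given $t_n\to\infty$ and $\tilde\mu_n\in D(\tau-t_n)$, the sequence $\Psi(t_n,\tau-t_n)\tilde\mu_n$ is tight, using the weighted $V$-estimate of Lemma \ref{2DHD-lemma6.4} and the compact embedding $V\hookrightarrow H$. Prokhorov's theorem then gives a subsequence converging weakly in $\mathcal{P}(H)$.

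The main obstacle sits here: the limit must lie in $\mathcal{P}_4(H)$, not merely in $\mathcal{P}(H)$. My approach is to use absorption. For $n$ large, $\Psi(t_n,\tau-t_n)\tilde\mu_n\in\mathcal{K}(\tau)$, and $\mathcal{K}(\tau)$ is closed, so the limit also lies in $\mathcal{K}(\tau)\subset\mathcal{P}_4(H)$. Hence the convergence takes place in $(\mathcal{P}_4(H),d_{\mathcal{P}(H)})$.

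With these three properties verified, the abstract theorem yields existence and uniqueness of $\mathscr{A}\in\mathscr{D}$ with $\mathscr{A}(\tau)=\omega(\mathcal{K},\tau)$. It also yields the descriptions via $\mathscr{D}$-complete orbits and $\mathscr{D}$-complete solutions; these follow from invariance together with the $\mathscr{D}$-membership of $\mathscr{A}$.
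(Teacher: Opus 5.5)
Your proposal is correct and is essentially the paper's proof. The paper also combines continuity of $\Psi$ (Lemma \ref{MarkovFell***}), the closed $\mathscr{D}$-pullback absorbing set $\mathcal{K}$ (Lemma \ref{sDHD-lemma6.8}) and $\mathscr{D}$-pullback asymptotic compactness (Lemma \ref{sDHD-lemma6.9}), then invokes the abstract result of \cite{LDS-JDE-2024}; your extra argument, that the weak limit lies in $\mathcal{P}_4(H)$ because the sequence eventually sits in the closed ball $\mathcal{K}(\tau)$, is a useful detail the paper leaves implicit in Lemma \ref{sDHD-lemma6.9}.
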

\begin{proof}
	On the one hand, we know from Lemma \ref{MarkovFell***} that $\Psi$ is a continuous non-autonomous dynamical system on  $\mathcal {P}_4 (H)$. On the other hand, Lemmas \ref{sDHD-lemma6.8} and \ref{sDHD-lemma6.9} imply that $\Psi$ has a closed $\mathscr{D}$-pullback absorbing set $\mathcal{K}=\left\{\mathcal{K}(\tau):\tau\in \mathbb{R}\right\}$ and is $\mathscr{D}$-pullback asymptotically compact in $\mathcal{P}_4(H)$. Therefore, by \cite[Proposition 2.10]{LDS-JDE-2024} (or \cite[Proposition 3.6]{WBX-JDE-2012}) we immediately conclude that $\Psi$ possesses a unique $\mathscr{D}$-pullback measure attractor. This completes the proof.
\end{proof}

%\subsection{Upper semicontinuity of pullback measure attractors}
%In this section, we consider the upper semicontinuity of 

\subsection{Asymptotic autonomy of pullback measure attractors}
In this section, we consider the asymptotically autonomous robustness of pullback measure attractors $\mathscr{A}=\{\mathscr{A}(\tau):\tau\in \mathbb{R}\}$ in Theorem \ref{sDHD-th6.10} when the time parameter $\tau$ tends to negative infinity. To achieve this goal, we introduce the following limiting system
\begin{align}\label{2DHD-asy1.1}
	\left\{
\begin{aligned}
		& d v(t)+\mu \mathcal{A} v(t)d t+B(v(t),v(t))d t
		= f_{\infty} dt+ \varepsilon_1 h_{\infty}(v(t))d W(t)+\varepsilon_2\int_{\mathcal{Z}}G(v(t-),z)\widetilde{N}(dt,dz),
		\\
		&v(0)=v_{0}.
	\end{aligned}
\right.
\end{align}
For the time-dependent forcing term $f$ and the time-independent forcing term $f_{\infty}$, we assume that
\begin{align}\label{2DHD-asy1.2}
\lim\limits_{\tau\to -\infty} \int_{-\infty}^{\tau} \|f(s)-f_{\infty}\|_{H}^2 ds=0.
\end{align}
Moreover, we assume that for every $r>0$, there exists a positive constant
$L_{r}$ such that, for all $t\in \mathbb{R}$, $u,v\in H$ with $\|u\|_{H}\vee \|v\|_{H}\leq r$, $h$ and $h_{\infty}$ satisfy
\begin{align}\label{2DHD-asy1.3}
	\|h(t,u)-h_{\infty}(v)\|_{\mathcal{L}_2(U;H)}^2
	+
	\int_\mathcal{Z}\|G(u,z)-G(v,z)\|_{H}^2\nu(dz)\leq L_{r}\|u-v\|_{H}^2.
\end{align}

Similar to Theorem \ref{2DHD-the3.3-0}, it is clear that for any $v_{0}\in L^{p}(\Omega,\mathscr{F}_{\tau};H)$, $p\geq 2$, and $f_\infty\in H$, there exists a $\varepsilon_0>0$ such that for any $\varepsilon_1, \varepsilon_2\in (0,\varepsilon_0]\subseteq (0,1]$ and $T>0$, the system \eqref{2DHD-asy1.1} has a unique solution $v\in \mathcal{D}([0,T],H)\cap L^2([0,T],V)$, $\mathbb{P}$-almost surely, and the solution $v$ satisfies
\begin{eqnarray}\label{2DHD-asy1.4}
	\mathbb{ E}\left[\sup_{t\in [0, T]}\|v(t)\|_{H}^{p}\right]+\mathbb{ E}\left[\int_{0}^{T} \|v(t)\|_{H}^{p-2}\|v(t)\|_{V}^2d t\right]\leq C_T\left(1+\|f_{\infty}\|_{H}^p+\mathbb{ E} [\|v_{0}\|_{H}^{p}]\right).
\end{eqnarray}

By virtue of the Feller and Markov properties established in Lemma \ref{MarkovFell}, and following a procedure analogous to the definition in \eqref{SLRSSs3.19} and the proof of Lemma \ref{MarkovFell***}, we can show that system \eqref{2DHD-asy1.1} generates a continuous dynamical system $S(t): \mathcal{P}_4(H)\to \mathcal{P}_4(H)$ defined by
$$
\int_{H}\phi(v_0)\left(S(t)\tilde{\mu}\right)(dv_0)=\int_{H} \mathbb{E}\left[\phi(v(t,v_0))\right]\tilde{\mu}(dv_0) , \quad \forall t\in \mathbb{R}^+, \phi\in C_b(H), \tilde{\mu} \in\mathcal{P}_4(H).
$$
As a consequence of Theorem \ref{sDHD-th6.10}, we deduce that system \eqref{2DHD-asy1.1} possesses a unique measure attractor $\mathscr{A}_{\infty}$ in $(\mathcal {P}_4 (H), d_{\mathcal{P}(H)})$, which is compact, invariant, and attracts the bounded
sets in $H$ (see\cite{Mar-EJP-1998} for the related concept).

Next, we derive the following uniform convergence between the solutions of \eqref{2DHD-1.1} and \eqref{2DHD-asy1.1}
\begin{lemma}\label{2DHD-lemma6.11}
	Assume that the hypotheses of Theorem \ref{2DHD-the3.3-0} and \eqref{2DHD-asy1.2}-\eqref{2DHD-asy1.3} hold. Then, there exists a $\varepsilon_{0}\in (0,1]$ such that for any bounded set $\mathfrak{E}\subset H$, $T>0$, $\varkappa>0$  and $\varepsilon_{1}, \varepsilon_{2}\in (0,\varepsilon_{0}]$,
	\begin{align*}
		\lim\limits_{\tau_n\rightarrow -\infty}\sup_{\varsigma\in \mathfrak{E}}\mathbb{P}\left(\left\{\omega \in \Omega: \sup_{0\leq t\leq T}\|u(t+\tau_{n},\tau_{n},\varsigma)-v(t,\varsigma)\|_{H}\geq \varkappa\right\}\right)=0.
	\end{align*}
\end{lemma}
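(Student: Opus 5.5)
We will follow the same scheme as in the proof of Lemma \ref{2DHD-lemma5.5}: a stopping-time localization combined with a weighted energy estimate for the difference. Fix a bounded set $\mathfrak{E}\subset H$, $T>0$ and $\varkappa>0$. Write $u_n(t):=u(t+\tau_n,\tau_n,\varsigma)$ and $v(t):=v(t,\varsigma)$ for $t\in[0,T]$. Then $u_n$ solves \eqref{2DHD-1.1} on $[0,T]$ with shifted forcing $f(\cdot+\tau_n)$, shifted noise coefficient $h(\cdot+\tau_n,\cdot)$, and time-shifted Wiener and Poisson noises. Since the noises are stationary with independent increments and only probabilities are involved, we may realize $u_n$ and $v$ on a common probability space driven by the same $W$ and $\widetilde{N}$. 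We then set $w_n:=u_n-v$.

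\textbf{Step 1 (uniform bounds and localization).} By Theorem \ref{2DHD-the3.3-0} and \eqref{2DHD-asy1.4}, there is a constant $C=C(\mathfrak{E},T)$ such that
\[
\sup_{\varsigma\in\mathfrak{E}}\mathbb{E}\Big[\sup_{[0,T]}\|u_n\|_H^2+\sup_{[0,T]}\|v\|_H^2+\int_0^T\|v\|_V^2ds\Big]\le C
\]
uniformly in $n$. To make this uniform in $n$ we use \eqref{2DHD-asy1.2}, which gives $\sup_n\int_{\tau_n}^{\tau_n+T}\|f\|_H^2ds<\infty$ for $\tau_n$ sufficiently negative. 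Given $\epsilon>0$, choose $\widehat R$ via Chebyshev so that each of these quantities exceeds $\widehat R$ (respectively $\widehat R^2$) with probability less than $\epsilon/3$. Define the stopping time $\sigma_n$ as the first time any of them exceeds its threshold. As in \eqref{2DHD-eq5.32}, it then suffices to show
\[
\sup_{\varsigma\in\mathfrak{E}}\mathbb{E}\Big[\sup_{[0,T]}\|w_n(t\wedge\sigma_n)\|_H^2\Big]\to0 .
\]

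\textbf{Step 2 (weighted Itô estimate).} Introduce $\mathfrak{F}(t)=e^{-\beta\int_0^t\|v(s)\|_V^2ds}$ and apply Itô's formula with jumps to $\mathfrak{F}(t)\|w_n(t)\|_H^2$. The difference of the bilinear terms reduces to $\langle B(w_n,v),w_n\rangle$. By Remark \ref{Remweee2.3} this is bounded by $\frac{\mu}{2}\|w_n\|_V^2+C\|v\|_V^2\|w_n\|_H^2$, and the second piece is absorbed by the weight once $\beta$ is chosen large.

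The forcing contributes $2\langle f(s+\tau_n)-f_\infty,w_n\rangle$. We bound this by $\mathfrak{F}\|w_n\|_H^2+\int_{\tau_n}^{\tau_n+T}\|f(s)-f_\infty\|_H^2ds$, and the last integral tends to $0$ by \eqref{2DHD-asy1.2}, since $\tau_n+T\to-\infty$.

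The noise terms are treated with \eqref{2DHD-asy1.3}. Before $\sigma_n$ we have $\|u_n\|_H,\|v\|_H\le\widehat R$, so
\[
\|h(t+\tau_n,u_n)-h_\infty(v)\|^2_{\mathcal{L}_2(U;H)}+\int_{\mathcal Z}\|G(u_n,z)-G(v,z)\|^2_H\nu(dz)\le L_{\widehat R}\|w_n\|_H^2 .
\]
The martingale parts are handled by the BDG inequality (Lemma \ref{BDSL}) exactly as in \eqref{2DHD-eq5.38}–\eqref{2DHD-eq5.39}: a term $\tfrac14\mathbb{E}\sup\mathfrak{F}\|w_n\|^2$ is absorbed on the left, and what remains is Lipschitz-controlled. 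Note that $w_n(0)=0$.

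\textbf{Step 3 (Gronwall and unweighting).} Gronwall then yields
\[
\mathbb{E}\sup_{[0,T]}\mathfrak{F}\|w_n(\cdot\wedge\sigma_n)\|_H^2\le Ce^{C_{L_{\widehat R}}T}\int_{\tau_n}^{\tau_n+T}\|f-f_\infty\|_H^2ds .
\]
Since $\mathfrak{F}^{-1}\le e^{\beta\widehat R^2}$ up to $\sigma_n$, we can remove the weight. Chebyshev's inequality then gives convergence to $0$ uniformly in $\varsigma\in\mathfrak{E}$, and letting $\epsilon\to0$ concludes the proof.

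\textbf{Main obstacle.} The main difficulty is the noise mismatch. The assumption \eqref{2DHD-asy1.3} must genuinely bound $\|h(t,u)-h_\infty(v)\|$ by $\|u-v\|$ for every $t$. Read literally with $u=v$, it forces $h(t,\cdot)=h_\infty$, so no vanishing remainder arises from $h$; we rely on it exactly as stated. The second delicate point is justifying the coupling of the time-shifted noises, which is needed to compare $u_n$ and $v$ on the same probability space. This is legitimate because only the laws of the noises enter, and it is the step we would write out carefully.
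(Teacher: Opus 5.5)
Your proposal is correct and follows essentially the same route as the paper. The ingredients match: the same stopping time built from $\|u(t+\tau_n,\tau_n,\varsigma)\|_H$, $\|v(t,\varsigma)\|_H$ and $\int_0^t\|v(s,\varsigma)\|_V^2ds$; It\^{o}'s formula for $\mathfrak{F}(t)\|w_n(t)\|_H^2$ with a weight in $\int_0^t\|v\|_V^2ds$ that absorbs $\langle B(w_n,v),w_n\rangle$; BDG together with \eqref{2DHD-asy1.3} for the noise terms; Gronwall with the vanishing remainder $\int_{\tau_n}^{\tau_n+T}\|f(s)-f_{\infty}\|_H^2ds$; and unweighting through $\mathfrak{F}^{-1}\le e^{\beta\widehat{R}^2}$. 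Your explicit remark that both processes must be driven by the same time-shifted noise makes precise a point the paper uses only implicitly in \eqref{2DHD-eq6.324}, which has a single $dW(t)$ and $\widetilde{N}(dt,dz)$; this is harmless because only the laws enter the later application in Lemma \ref{2DHD-lemma6.12}.
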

\begin{proof}
	The proof follows the same methodology as in Lemma \ref{2DHD-lemma5.5}, combined with conditions \eqref{2DHD-asy1.2} and \eqref{2DHD-asy1.3}. 
	Define the stopping time 
	$$
	\zeta_{\widehat{R}}=\inf \left\{t\in [0,T]: \|u(t+\tau_n,\tau_n,\varsigma)\|_{H}>\widehat{R}, \text{ or } \|v(t,\varsigma)\|_{H}>\widehat{R}, \text{ or } \int_{0}^{T} \|v(s,\varsigma)\|_{V}^2ds>\widehat{R}^2 \right\},
	$$	
	and let $\mathcal{Y}_{n}^{\varsigma}(t)=u(t+\tau_{n},\tau_n,\varsigma)-v(t,\varsigma)$.
	It suffices to prove that for any $\varkappa>0$, as $\tau_{n}\rightarrow -\infty$,
	\begin{align}\label{2DHD-eq6.323}
		\sup_{\varsigma\in \mathfrak{E}}\mathbb{P}\left( \left\{ \omega\in\Omega:   \sup_{0\leq t\leq T}\left\|\mathcal{Y}_{n}^{\varsigma}(t\wedge \zeta_{\widehat{R}})\right\|_{H} \geq\varkappa      \right\} \right)\rightarrow 0.
	\end{align}
	
	Let $\mathfrak{F}(t):=e^{-C_{\mathbf{c}_0,\mu} \int_0^{t} \|v(s,\varsigma)\|_{V}^2ds}$ for all $t\in [0,T]$ and $\varsigma \in \mathfrak{E}$, where $C_{\mathbf{c}_0,\mu}>0$ is a constant that appears in the subsequent inequality \eqref{minus-bin1}.  By \eqref{2DHD-1.1} and \eqref{2DHD-asy1.1} we have
	\begin{align}\label{2DHD-eq6.324}
		\begin{split}
			&~d\mathcal{Y}_{n}^{\varsigma}(t)+\mu\mathcal{A}\mathcal{Y}_{n}^{\varsigma}(t)dt+\left(B(u(t+\tau_{n},\tau_n,\varsigma),\mathcal{Y}_{n}^{\varsigma}(t))+B(\mathcal{Y}_{n}^{\varsigma}(t),v(t,\varsigma))\right)dt\\
			&=(f(t+\tau_{n})-f_{\infty})dt+\varepsilon_{1} \left(h(t+\tau_n,u(t+\tau_{n},\tau_n,\varsigma))-h_{\infty}(v(t,\varsigma))\right)dW(t)\\
			&+\varepsilon_{2} \int_{\mathcal{Z}} \left(G(u((t+\tau_{n})-,\tau_n,\varsigma)),z)-G(v(t-,\varsigma),z)\right)\widetilde{N}(dt,dz).
		\end{split}
	\end{align}
	Applying It\^{o}'s formula to $\mathfrak{F}(t)\|\mathcal{Y}_{n}^{\varsigma}(t)\|_{H}^2$, we can get that for all $t\in [0,T]$,
	\begin{align*}
		&~~\mathfrak{F}(t\wedge \zeta_{\widehat{R}})\|\mathcal{Y}_{n}^{\varsigma}(t\wedge \zeta_{\widehat{R}})\|_{H}^2+2\mu \int_{0}^{t\wedge \zeta_{\widehat{R}}}\mathfrak{F}(s)\|\mathcal{Y}_{n}^{\varsigma}(s)\|_{V}^2ds \notag\\
		&=-C_{\mathbf{c}_0,\mu} \int_0^{t\wedge \tau^{\epsilon}_{\widehat{R}}} \mathfrak{F}(s)\|v(s,\varsigma))\|_{V}^2\|\mathcal{Y}_{n}^{\varsigma}(s)\|_{H}^2ds\notag\\
		&-2 \int_{0}^{t\wedge \zeta_{\widehat{R}}}\mathfrak{F}(s) \langle B(\mathcal{Y}_{n}^{\varsigma}(s),v(s,\varsigma)),
		\mathcal{Y}_{n}^{\varsigma}(s)\rangle ds+2 \int_{0}^{t\wedge \zeta_{\widehat{R}}}\mathfrak{F}(s) \langle f(s+\tau_{n})-f_{\infty}, \mathcal{Y}_{n}^{\varsigma}(s)\rangle ds\notag\\
		&+2\varepsilon_{1} \int_{0}^{t\wedge \zeta_{\widehat{R}}}\mathfrak{F}(s) \langle
		\mathcal{Y}_{n}^{\varsigma}(s),h(s+\tau_n,u(s+\tau_{n},\tau_n,\varsigma))-h_{\infty}(v(s,\varsigma))\rangle dW(s)\notag\\
		&+\varepsilon_{1}^2 \int_{0}^{t\wedge \zeta_{\widehat{R}}}\mathfrak{F}(s)\|h(s+\tau_n,u(s+\tau_{n},\tau_n,\varsigma))-h_{\infty}(v(s,\varsigma))\|_{\mathcal{L}_2(U;H)}^2 ds\notag\\
		&+2\varepsilon_{2} \int_{0}^{t\wedge \zeta_{\widehat{R}}}\int_{\mathcal{Z}} \mathfrak{F}(s) \langle
		\mathcal{Y}_{n}^{\varsigma}(s),G(u((s+\tau_{n})-,\tau_n,\varsigma)),z)-G(v(s-,\varsigma),z)\rangle \widetilde{N}(dt,dz)\notag\\
		&+\varepsilon_{2}^2\int_{0}^{t\wedge \zeta_{\widehat{R}}}\int_{\mathcal{Z}} \mathfrak{F}(s)\|G(u((s+\tau_{n})-,\tau_n,\varsigma)),z)-G(v(s-,\varsigma),z)\|_{H}^2N(ds,dz)\notag\\
		&:=-C_{\mathbf{c}_0,\mu} \int_0^{t\wedge \tau^{\epsilon}_{\widehat{R}}} \mathfrak{F}(s)\|v(s,\varsigma))\|_{V}^2\|\mathcal{Y}_{n}^{\varsigma}(s)\|_{H}^2ds+\sum_{i=12}^{17} \mathscr{J}_{i}(t\wedge \tau^{\epsilon}_{\widehat{R}}),
	\end{align*}
	which, along with Poincar\'{e}'s inequality and the estimate
	\begin{align}\label{minus-bin1}
		\left| \mathscr{J}_{12}(r\wedge \tau^{\epsilon}_{\widehat{R}})\right| \leq \mu \int_0^{t\wedge \tau^{\epsilon}_{\widehat{R}}} \mathfrak{F}(s)\|\mathcal{Y}_{n}^{\varsigma}(s)\|_V^2ds+ C_{\mathbf{c}_0,\mu} \int_0^{t\wedge \tau^{\epsilon}_{\widehat{R}}} \mathfrak{F}(s)\|v(s,\varsigma))\|_{V}^2\|\mathcal{Y}_{n}^{\varsigma}(s)\|_{H}^2ds,
	\end{align}	
	can deduce that for all $t\in [0,T]$,
	{\small
		\begin{align}\label{2DHD-eq6.326}
		\mathbb{E}\left[\sup_{0\leq r \leq t} \mathfrak{F}(r\wedge \zeta_{\widehat{R}})\|\mathcal{Y}_{n}^{\varsigma}(r\wedge \zeta_{\widehat{R}})\|_{H}^2\right]+\mu\lambda_1 \mathbb{E}\left[\int_{0}^{t\wedge \zeta_{\widehat{R}}}\mathfrak{F}(s)\|\mathcal{Y}_{n}^{\varsigma}(s)\|_{H}^2ds\right] \leq \sum_{i=13}^{17}\mathbb{E}\left[\sup_{0\leq r \leq t} \left| \mathscr{J}_{i}(r\wedge \tau^{\epsilon}_{\widehat{R}})\right|\right].
%		&\leq2 \mathbb{E}\left[\int_{0}^{t\wedge \zeta_{\widehat{R}}}\mathfrak{F}(s) \left|\langle B(\mathcal{Y}_{n}^{\varsigma}(s),v(s,\varsigma)),
%		\mathcal{Y}_{n}^{\varsigma}(s)\rangle \right|ds\right]+2 \mathbb{E}\left[\int_{0}^{t\wedge \zeta_{\widehat{R}}}\mathfrak{F}(s) \left|\langle f(s+\tau_{n})-f_{\infty}, \mathcal{Y}_{n}^{\varsigma}(s)\rangle \right|ds\right]\notag\\
	\end{align}
}	

We now deal with the right-hand terms of \eqref{2DHD-eq6.326}. By H\"{o}lder's inequality and Young's inequality we obtain that for all $t\in [0,T]$,
\begin{align}\label{2DHD-eq6.327}
	\mathbb{E}\left[\sup_{0\leq r \leq t} \left| \mathscr{J}_{13}(r\wedge \tau^{\epsilon}_{\widehat{R}})\right|\right]\leq \frac{\mu \lambda_1}{2}\mathbb{E}\left[\int_{0}^{t\wedge \zeta_{\widehat{R}}}\mathfrak{F}(s)\|\mathcal{Y}_{n}^{\varsigma}(s)\|_{H}^2ds\right]
	+\frac{2}{\mu\lambda_1}\int_0^{T}\|f(s+\tau_{n})-f_{\infty}\|_{H}^2ds.
\end{align}
By the BDG inequality and \eqref{2DHD-asy1.3}, we have
\begin{align}\label{2DHD-eq6.328}
	\begin{split}
		&~\mathbb{E}\left[\sup_{0\leq r \leq t} \left| \mathscr{J}_{15}(r\wedge \tau^{\epsilon}_{\widehat{R}})\right|\right]+\mathbb{E}\left[\sup_{0\leq r \leq t} \left| \mathscr{J}_{17}(r\wedge \tau^{\epsilon}_{\widehat{R}})\right|\right]\\
		&\leq
		L_{\widehat{R}}\int_0^{t}\mathbb{E}\left[\sup_{0\leq r \leq s} \mathfrak{F}(r\wedge \zeta_{\widehat{R}})\|\mathcal{Y}_{n}^{\varsigma}(r\wedge \zeta_{\widehat{R}})\|_{H}^2\right]ds,
	\end{split}
\end{align}	
and	
\begin{align}\label{2DHD-eq6.329}
	\begin{split}
		&~~\mathbb{E}\left[\sup_{0\leq r \leq t} \left| \mathscr{J}_{14}(r\wedge \tau^{\epsilon}_{\widehat{R}})\right|\right]+\mathbb{E}\left[\sup_{0\leq r \leq t} \left| \mathscr{J}_{16}(r\wedge \tau^{\epsilon}_{\widehat{R}})\right|\right]\\
		&\leq \frac{1}{2}\mathbb{E}\left[\sup_{0\leq r \leq t} \mathfrak{F}(r\wedge \zeta_{\widehat{R}})\|\mathcal{Y}_{n}^{\varsigma}(r\wedge \zeta_{\widehat{R}})\|_{H}^2\right]+
		C_{L_{\widehat{R}}}\int_0^{t}\mathbb{E}\left[\sup_{0\leq r \leq s} \mathfrak{F}(r\wedge \zeta_{\widehat{R}})\|\mathcal{Y}_{n}^{\varsigma}(r\wedge \zeta_{\widehat{R}})\|_{H}^2\right]ds.
	\end{split}
\end{align}			
Combining with \eqref{2DHD-eq6.326}-\eqref{2DHD-eq6.329} yields		
	\begin{align}\label{2DHD-eq6.330}
		\begin{split}
			&~~\frac{1}{2}\mathbb{E}\left[\sup_{0\leq r \leq t} \mathfrak{F}(r\wedge \zeta_{\widehat{R}})\|\mathcal{Y}_{n}^{\varsigma}(r\wedge \zeta_{\widehat{R}})\|_{H}^2\right]\\
			& \leq C_{L_{\widehat{R}}}\int_0^{t}\mathbb{E}\left[\sup_{0\leq r \leq s} \mathfrak{F}(r\wedge \zeta_{\widehat{R}})\|\mathcal{Y}_{n}^{\varsigma}(r\wedge \zeta_{\widehat{R}})\|_{H}^2\right]ds+ \frac{2}{\mu\lambda_1}\int_0^{T}\|f(s+\tau_{n})-f_{\infty}\|_{H}^2ds.
		\end{split}
	\end{align}
	
Using Gronwall's lemma for \eqref{2DHD-eq6.330}, we infer that for all $n\in \mathbb{N}$ and $\varsigma \in \mathfrak{E}$,	
	\begin{align*}
		\mathbb{E}\left[\sup_{0\leq r \leq t} \mathfrak{F}(r\wedge \zeta_{\widehat{R}})\|\mathcal{Y}_{n}^{\varsigma}(r\wedge \zeta_{\widehat{R}})\|_{H}^2\right] \leq C_{\mu,T,L_{\widehat{R}}} \int_{\tau_{n}}^{T+\tau_{n}}\|f(s)-f_{\infty}\|_{H}^2ds.
\end{align*}
Following the approach used for inequality \eqref{2DHD-eqsfsf5.42}, we have
	\begin{align}\label{2DHD-eq6.331}
	\sup_{\varsigma \in \mathfrak{E}}\mathbb{E}\left[\sup_{0\leq r \leq t} \|\mathcal{Y}_{n}^{\varsigma}(r\wedge \zeta_{\widehat{R}})\|_{H}^2\right] \leq C_{\mu,T,\widehat{R},L_{\widehat{R}}} \int_{-\infty}^{T+\tau_{n}}\|f(s)-f_{\infty}\|_{H}^2ds.
\end{align}
which, together with \eqref{2DHD-asy1.2} and Chebyshev's inequality, concludes \eqref{2DHD-eq6.323}. This completes the proof.
% Since the procedure is entirely analogous, we omit the details here.
\end{proof}

Since the tightness of a set in $\mathcal{P}_4(H)$ is not determined by its boundedness, it necessitates the introduction of a weaker notion of quasi-tightness, as defined in \cite{LYR-QTDS-2025}. 
\begin{definition}
 A set $\mathfrak{U}$ in $\mathcal{P}(H)$ is called \textit{quasi-tight} if, for any $\epsilon \in (0,1)$, there is a bounded subset $\mathfrak{E}_{\epsilon} \subset H$ such that $\tilde{\mu}(\mathfrak{E}_{\epsilon}) \geq 1 - \epsilon$ for all $\tilde{\mu} \in \mathfrak{U}$.
\end{definition}

Clearly, every tight set is quasi-tight. However, the two concepts coincide if and only if the state space is finite-dimensional.

\begin{lemma}\label{2DHD-lemma6.12}
	Under the hypotheses of Lemma \ref{2DHD-lemma6.11}. For any quasi-tight set $\mathfrak{U}\subset \mathcal{P}(H)$, $T>0$, and $\tau_n \rightarrow -\infty$ as $n\rightarrow -\infty$,
	$$
	\lim\limits_{n\rightarrow \infty} \sup_{\tilde{\mu}\in \mathfrak{U}}d_{\mathcal{P}(H)}\left(\Psi(T,\tau_n)\tilde{\mu},S(T)\tilde{\mu} \right)=0.
	$$
\end{lemma}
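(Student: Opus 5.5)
We will reduce the statement to Lemma \ref{2DHD-lemma6.11} by testing against bounded Lipschitz functions. By the definition of $d_{\mathcal{P}(H)}$, it suffices to show that
\begin{align*}
\sup_{\tilde{\mu}\in\mathfrak{U}}\ \sup_{\|\phi\|_{L_b}\leq 1}\left|\int_H \Big(\mathbb{E}\big[\phi(u(T+\tau_n,\tau_n,\varsigma))\big]-\mathbb{E}\big[\phi(v(T,\varsigma))\big]\Big)\tilde{\mu}(d\varsigma)\right|\rightarrow 0 .
\end{align*}
This representation holds because $\Psi(T,\tau_n)\tilde{\mu}=p^*_{\tau_n,\tau_n+T}\tilde{\mu}$. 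Likewise, $S(T)\tilde{\mu}$ is the $\tilde{\mu}$-mixture of the laws of $v(T,\varsigma)$. We may therefore take the initial datum to be deterministic, $\varsigma\in H$, and integrate against $\tilde{\mu}$ afterwards.

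Fix $\epsilon\in(0,1)$. By quasi-tightness of $\mathfrak{U}$ there is a bounded set $\mathfrak{E}_\epsilon\subset H$ with $\tilde{\mu}(H\setminus\mathfrak{E}_\epsilon)\leq\epsilon$ for every $\tilde{\mu}\in\mathfrak{U}$. We split the integral over $\mathfrak{E}_\epsilon$ and over its complement.

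On the complement, $\|\phi\|_{C_b}\leq1$ bounds the integrand by $2$. That part contributes at most $2\epsilon$, uniformly in $\tilde{\mu}\in\mathfrak{U}$, in $\phi$, and in $n$.

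On $\mathfrak{E}_\epsilon$ we use $|\phi(x)-\phi(y)|\leq \min\{2,\|x-y\|_H\}$, which holds for all $\|\phi\|_{L_b}\leq1$. For any $\varkappa>0$ this gives
\begin{align*}
\left|\mathbb{E}\big[\phi(u(T+\tau_n,\tau_n,\varsigma))-\phi(v(T,\varsigma))\big]\right|\leq \varkappa+2\,\mathbb{P}\left(\sup_{0\leq t\leq T}\|u(t+\tau_n,\tau_n,\varsigma)-v(t,\varsigma)\|_H\geq\varkappa\right).
\end{align*}
The right-hand side does not depend on $\phi$. We then take $\sup_{\varsigma\in\mathfrak{E}_\epsilon}$ and apply Lemma \ref{2DHD-lemma6.11} with the bounded set $\mathfrak{E}=\mathfrak{E}_\epsilon$. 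This yields, uniformly in $\tilde{\mu}\in\mathfrak{U}$ (each $\tilde{\mu}$ being a probability measure),
\begin{align*}
\limsup_{n\to\infty}\sup_{\tilde{\mu}\in\mathfrak{U}} d_{\mathcal{P}(H)}\big(\Psi(T,\tau_n)\tilde{\mu},S(T)\tilde{\mu}\big)\leq \varkappa+2\epsilon .
\end{align*}
Letting $\varkappa\to0$ and then $\epsilon\to0$ gives the claim.

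The only delicate point is the measurability and integrability of $\varsigma\mapsto\mathbb{E}[\phi(u(T+\tau_n,\tau_n,\varsigma))]$, which is needed for the mixture representations above. This follows from the Feller property in Lemma \ref{MarkovFell}, which makes the map continuous in $\varsigma$. A second point to check is that Lemma \ref{2DHD-lemma6.11} holds uniformly over the whole bounded set $\mathfrak{E}_\epsilon$, not just pointwise in $\varsigma$; this is exactly its formulation.

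With those points checked, the argument is the standard $\epsilon/3$ scheme. Quasi-tightness, rather than tightness, suffices because Lemma \ref{2DHD-lemma6.11} needs only boundedness of the set of initial data.
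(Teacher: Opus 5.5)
Your proposal is correct and follows essentially the same route as the paper. Both split the $\tilde{\mu}$-integral over the bounded set $\mathfrak{E}_\epsilon$ from quasi-tightness and its complement (which contributes at most $2\epsilon$), then on $\mathfrak{E}_\epsilon$ split according to whether $\|u(T+\tau_n,\tau_n,\varsigma)-v(T,\varsigma)\|_H\geq\varkappa$ and invoke the uniform convergence in probability of Lemma \ref{2DHD-lemma6.11}; the only cosmetic difference is that you keep $\varkappa$ free and send it to zero at the end, whereas the paper tacitly fixes $\varkappa$ small relative to $\epsilon$.
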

\begin{proof}
	Given $\epsilon \in (0,1)$, by the quasi-tightness of $\mathfrak{U}$, we obtain that there exists a bounded subset $\mathfrak{E}_{\epsilon}$ of $H$ such that $\sup_{\tilde{\mu}\in \mathfrak{U}}\tilde{\mu}(H\backslash\mathfrak{E}_{\epsilon})\leq \epsilon$. By Lemma \ref{2DHD-lemma6.11}, we see that there exists $N\in \mathbb{N}$ such that for all $n\geq N$,
	$$
	\sup_{n\geq N}\sup_{\varsigma\in \mathfrak{E}}\mathbb{P}\left(\left\{\omega \in \Omega: \sup_{0\leq t\leq T}\|u(t+\tau_{n},\tau_{n},\varsigma)-v(t,\varsigma)\|_{H}\geq \varkappa\right\}\right)<\frac{\epsilon}{4}.
	$$
	from which we can derive that for any $\varkappa>0$ and $n\geq N$,
	\begin{align}\label{2DHD-asy6.24}
		\begin{split}
			&~~\sup\limits_{{\phi \in L_{b}{(H)}},\,{\|\phi\|_{L_b} \leq 1}}\ \int_{\mathfrak{E}_{\epsilon}} \mathbb{E}\left[\left|\phi(u(T+\tau_n,\tau_n,\varsigma))-\phi(v(T,\varsigma))\right|\right]\tilde{\mu}(d\varsigma)\\
			&\leq \sup\limits_{{\phi \in L_{b}{(H)}},\,{\|\phi\|_{L_b} \leq 1}}\ \int_{\mathfrak{E}_{\epsilon}} \int_{\{\|u(T+\tau_n,\tau_n,\varsigma)-v(T,\varsigma)\|_H\geq \varkappa\}}\left|\phi(u(T+\tau_n,\tau_n,\varsigma))-\phi(v(T,\varsigma))\right|d\mathbb{P}\tilde{\mu}(d\varsigma)\\
			&+\sup\limits_{{\phi \in L_{b}{(H)}},\,{\|\phi\|_{L_b} \leq 1}}\ \int_{\mathfrak{E}_{\epsilon}} \int_{\{\|u(T+\tau_n,\tau_n,\varsigma)-v(T,\varsigma)\|_H< \varkappa\}}\left|\phi(u(T+\tau_n,\tau_n,\varsigma))-\phi(v(T,\varsigma))\right|d\mathbb{P}\tilde{\mu}(d\varsigma)\\
			&\leq 2\sup_{\varsigma\in \mathfrak{E}}\mathbb{P}\left(\left\{\omega \in \Omega: \|u(T+\tau_{n},\tau_{n},\varsigma)-v(T,\varsigma)\|_{H}\geq \varkappa\right\}\right)+\frac{\epsilon}{2}<\epsilon.
		\end{split}
	\end{align}

	Note that 
	$$
	\int_{H}\phi(\varsigma)\left(\Psi(t,\tau)\tilde{\mu}\right)(d\varsigma)=\int_{H} \mathbb{E}\left[\phi(u(t+\tau,\tau,\varsigma))\right]\tilde{\mu}(d\varsigma) , \quad \forall t\in \mathbb{R}^+, \phi\in C_b(H), \tilde{\mu} \in\mathcal{P}_4(H).
	$$
	Thus, for any $\tilde{\mu}\in \mathfrak{U}$ and $n\geq N$, by \eqref{2DHD-asy6.24} we have
	\begin{align*}
%	\begin{split}
		&~~d_{\mathcal{P}(H)}\left(\Psi(T,\tau_n)\tilde{\mu},S(T)\tilde{\mu} \right)\\
		&= \sup\limits_{{\phi \in L_{b}{(H)}},\,{\|\phi\|_{L_b} \leq 1}}\ \left|\int_{H} \phi(\varsigma)\left(\Psi(T,\tau_n)\tilde{\mu}\right)(d\varsigma)-\int_{H} \phi(\varsigma)\left(S(T)\tilde{\mu}\right)(d\varsigma)\right|\\
		&\leq  \sup\limits_{{\phi \in L_{b}{(H)}},\,{\|\phi\|_{L_b} \leq 1}}\ \int_{H}\left| \mathbb{E}\left[\phi(u(T+\tau_n,\tau_n,\varsigma))\right]- \mathbb{E}\left[\phi(v(T,\varsigma))\right]\right|\tilde{\mu}(d\varsigma)\\
		&\leq \sup\limits_{{\phi \in L_{b}{(H)}},\,{\|\phi\|_{L_b} \leq 1}}\ \int_{\mathfrak{E}_{\epsilon}} \mathbb{E}\left[\left|\phi(u(T+\tau_n,\tau_n,\varsigma))-\phi(v(T,\varsigma))\right|\right]\tilde{\mu}(d\varsigma)+2\epsilon<3\epsilon,
%	\end{split}	
	\end{align*}
	as desired. This completes the proof.
\end{proof}

\begin{theorem}\label{2DHD-lemma6.13}
	Under the hypotheses of Lemma \ref{2DHD-lemma6.11}. For the $\mathscr{D}$-pullback measure attractor $\mathscr{A}=\{\mathscr{A}(\tau):\tau\in \mathbb{R}\}$ of 
	system \eqref{2DHD-1.1} and the measure attractor $\mathscr{A}_{\infty}$ of system \eqref{2DHD-asy1.1}, it holds
\begin{align*}
	\lim\limits_{\tau\rightarrow -\infty}d_{\mathcal{P}_4(H)}(\mathscr{A}(\tau),\mathscr{A}_{\infty})=0.
\end{align*}
\end{theorem}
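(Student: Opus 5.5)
We argue by contradiction, following the standard scheme for asymptotic autonomy of pullback attractors (as in \cite{LYR-QTDS-2025}). Suppose the conclusion fails. Then there exist $\delta>0$, a sequence $\tau_n\to-\infty$ and measures $\tilde{\mu}_n\in\mathscr{A}(\tau_n)$ such that
\begin{align*}
d_{\mathcal{P}(H)}(\tilde{\mu}_n,\mathscr{A}_\infty)\geq \delta \quad \text{for all } n.
\end{align*}
The goal is to show that, after passing to a subsequence, $\tilde{\mu}_n$ converges in $d_{\mathcal{P}(H)}$ to a point of $\mathscr{A}_\infty$. This would contradict the lower bound.

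\textbf{Step 1: uniform bounds on $\bigcup_{\tau\le\tau_0}\mathscr{A}(\tau)$.} By Theorem \ref{sDHD-th6.10}, $\mathscr{A}(\tau)\subset\mathcal{K}(\tau)=\mathbb{B}_{\mathcal{P}_4(H)}(\mathscr{M}(\tau)^{1/4})$. Condition \eqref{2DHD-asy1.2} gives $\|f(s)\|_H^4\le C(1+\|f_\infty\|_H^4+\|f(s)-f_\infty\|^4_H)$ near $-\infty$. Using this together with \eqref{LSWs6.3}, I would show that $\sup_{\tau\le\tau_0}\mathscr{M}(\tau)<\infty$ for some $\tau_0$. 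It may be necessary to assume $f-f_\infty$ is bounded in $H$ near $-\infty$, or to check this separately. Consequently $\bigcup_{\tau\le\tau_0}\mathscr{A}(\tau)$ is bounded in $\mathcal{P}_4(H)$, and by Chebyshev it is quasi-tight. The same reasoning, combined with Lemma \ref{2DHD-lemma6.4} and the $V$-tightness estimate \eqref{ojodfkpdfkp}, should show that this union is in fact tight in $H$. The key point is that the constants $\mathscr{M}_1,\mathscr{M}_2$ there depend on $\tau$ only through tail integrals of $f$, which are uniform for $\tau\le\tau_0$.

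\textbf{Step 2: extracting a limit and identifying it via invariance.} Fix $T>0$. By invariance of the pullback attractor, $\tilde{\mu}_n=\Psi(T,\tau_n-T)\tilde{\nu}_n$ for some $\tilde{\nu}_n\in\mathscr{A}(\tau_n-T)$. The family $\{\tilde{\nu}_n\}$ is quasi-tight by Step 1, so Lemma \ref{2DHD-lemma6.12} gives
\begin{align*}
d_{\mathcal{P}(H)}\big(\tilde{\mu}_n,S(T)\tilde{\nu}_n\big)\to 0 .
\end{align*}
By the tightness from Step 1 and Prokhorov's theorem, along a subsequence $\tilde{\nu}_n\to\tilde{\nu}$ and $\tilde{\mu}_n\to\tilde{\mu}$ weakly. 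The uniform $\mathcal{P}_4$ bound passes to the limit by lower semicontinuity of $\tilde{\mu}\mapsto\int\|x\|_H^4\,d\tilde{\mu}$, so $\tilde{\mu},\tilde{\nu}\in\mathcal{P}_4(H)$ with norms at most $r_0:=\sup_{\tau\le\tau_0}\mathscr{M}(\tau)^{1/4}$.

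The autonomous semigroup $S(T)$ is continuous on $\mathcal{P}_4(H)$ by the Feller argument of Lemma \ref{MarkovFell***}. Hence $S(T)\tilde{\nu}_n\to S(T)\tilde{\nu}$, and therefore $\tilde{\mu}=S(T)\tilde{\nu}$. A diagonal argument over $T=k\in\mathbb{N}$ yields limits $\tilde{\nu}^{(k)}$ in the fixed ball $\mathbb{B}_{\mathcal{P}_4(H)}(r_0)$ with $\tilde{\mu}=S(k)\tilde{\nu}^{(k)}$ for every $k$. Since $\mathscr{A}_\infty$ attracts bounded sets of $\mathcal{P}_4(H)$, we get
\begin{align*}
d_{\mathcal{P}(H)}(\tilde{\mu},\mathscr{A}_\infty)\le \sup_{\nu\in \mathbb{B}_{\mathcal{P}_4(H)}(r_0)} d_{\mathcal{P}(H)}(S(k)\nu,\mathscr{A}_\infty)\to0 \quad (k\to\infty).
\end{align*}
Since $\mathscr{A}_\infty$ is compact, hence closed, this gives $\tilde{\mu}\in\mathscr{A}_\infty$. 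Combined with $\tilde{\mu}_n\to\tilde{\mu}$, this contradicts $d_{\mathcal{P}(H)}(\tilde{\mu}_n,\mathscr{A}_\infty)\ge\delta$.

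\textbf{Main obstacle.} The delicate point is Step 1: obtaining tightness in $H$ that is uniform over all $\tau\le\tau_0$, not merely boundedness in $\mathcal{P}_4(H)$. This requires the $V$-estimates of Lemma \ref{2DHD-lemma6.4} and the bound \eqref{NSE-disghjmea5.9} with constants controlled by $\sup_{\tau\le\tau_0}\int_{-\infty}^\tau e^{-\widetilde{\kappa}(\tau-s)}\|f(s)\|_H^4\,ds$, which must be checked to be finite under \eqref{2DHD-asy1.2}. If that fails, the fallback is to use only quasi-tightness. In that case, one would rely on the compactness of $\mathscr{A}_\infty$ and the uniform attraction of $S$ on the $\mathcal{P}_4$-ball to show directly that $d_{\mathcal{P}(H)}(S(T)\tilde{\nu}_n,\mathscr{A}_\infty)\to0$, so that the contradiction follows from the triangle inequality without extracting a limit of $\tilde{\mu}_n$.
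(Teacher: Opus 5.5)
Your overall scheme is the right one: invariance $\mathscr{A}(\tau_n)=\Psi(T,\tau_n-T)\mathscr{A}(\tau_n-T)$, then Lemma \ref{2DHD-lemma6.12}, then attraction by $\mathscr{A}_\infty$. Your fallback is the triangle-inequality argument for which Lemma \ref{2DHD-lemma6.12} is set up; the paper itself only cites \cite[Theorem 15]{LYR-QTDS-2025}.

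The gap is Step 1, and both of your routes depend on it. You need $\bigcup_{\tau\le\tau_0}\mathscr{A}(\tau)$ to lie in a fixed ball $\mathbb{B}_{\mathcal{P}_4(H)}(r_0)$: route one uses it to place $\tilde{\nu}^{(k)}$ there, and the fallback uses it to invoke attraction of a $\mathcal{P}_4$-ball. You propose to get this from $\sup_{\tau\le\tau_0}\mathscr{M}(\tau)<\infty$, but that does not follow from the hypotheses. Condition \eqref{2DHD-asy1.2} controls only $\int\|f-f_\infty\|_H^2$, whereas $\mathscr{M}(\tau)$ is built on $\|f\|_H^4$, and your pointwise inequality does not convert $L^2$ control into $L^4$ control. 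For example, let $f-f_\infty$ be a sequence of bumps near $s_k=-k$ with $\int\|f-f_\infty\|_H^2=2^{-k}$ and $\int\|f-f_\infty\|_H^4=k$ over the $k$-th bump. Then \eqref{LSWs6.3} and \eqref{2DHD-asy1.2} hold, yet $\mathscr{M}(\tau)$ is of order $k$ right after the $k$-th bump, so $\sup_{\tau\le\tau_0}\mathscr{M}(\tau)=\infty$. The uniform $H$-tightness you claim via \eqref{ojodfkpdfkp} has the same defect, since \eqref{NSE-disghjmea5.9} is again a fourth-order quantity. Strengthening the hypothesis on $f-f_\infty$ would prove a different theorem.

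The missing idea is that second moments suffice, and this is exactly what quasi-tightness is for. Write $\tilde{\mu}\in\mathscr{A}(\tau)$ as $\Psi(t,\tau-t)\tilde{\nu}_t$ with $\tilde{\nu}_t\in\mathscr{A}(\tau-t)$, and use \eqref{2DHD-eq4.10} with some $\kappa\in[\widetilde{\kappa},\mu\lambda_1/2)$. The initial term vanishes as $t\to\infty$ because $\mathscr{A}\in\mathscr{D}$, and you get
\begin{align*}
\int_H\|x\|_H^2\,\tilde{\mu}(dx)\le C+C\int_{-\infty}^{\tau}e^{-\kappa(\tau-s)}\|f(s)\|_{V'}^2ds\le C+C\|f_\infty\|_H^2+C\int_{-\infty}^{\tau_0}\|f(s)-f_\infty\|_H^2ds
\end{align*}
uniformly in $\tau\le\tau_0$. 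Hence $\mathfrak{U}=\bigcup_{\tau\le\tau_0}\mathscr{A}(\tau)$ is quasi-tight, which is all Lemma \ref{2DHD-lemma6.12} requires.

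Next, $\mathscr{A}_\infty$ attracts $\mathfrak{U}$ without any $\mathcal{P}_4$ bound. Given $\epsilon$, choose $R$ with $\tilde{\mu}(H\setminus B_R)\le\epsilon$ on $\mathfrak{U}$, where $B_R=\{\|x\|_H\le R\}$. Split $\tilde{\mu}=\tilde{\mu}(B_R)\tilde{\mu}_R+\tilde{\mu}(H\setminus B_R)\tilde{\mu}_R'$, with $\tilde{\mu}_R$ the normalized restriction to $B_R$, so that $\tilde{\mu}_R\in\mathbb{B}_{\mathcal{P}_4(H)}(R)$. Since $S(T)$ is affine, $d_{\mathcal{P}(H)}(S(T)\tilde{\mu},S(T)\tilde{\mu}_R)\le 2\epsilon$. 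Hence $\limsup_{T\to\infty}\sup_{\tilde{\mu}\in\mathfrak{U}}d_{\mathcal{P}(H)}(S(T)\tilde{\mu},\mathscr{A}_\infty)\le 2\epsilon$. Now fix $T$ large first, then let $n\to\infty$ in
\begin{align*}
d_{\mathcal{P}_4(H)}(\mathscr{A}(\tau_n),\mathscr{A}_\infty)\le\sup_{\tilde{\nu}\in\mathfrak{U}}d_{\mathcal{P}(H)}\big(\Psi(T,\tau_n-T)\tilde{\nu},S(T)\tilde{\nu}\big)+\sup_{\tilde{\nu}\in\mathfrak{U}}d_{\mathcal{P}(H)}\big(S(T)\tilde{\nu},\mathscr{A}_\infty\big).
\end{align*}
For fixed $T$ the last term does not tend to $0$ in $n$, contrary to the wording of your fallback; it is small uniformly in $n$ once $T$ is large. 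With this argument, the Prokhorov extraction, the diagonal argument, $H$-tightness and compactness of $\mathscr{A}_\infty$ are all unnecessary.

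Alternatively, you could recover a genuine uniform $\mathcal{P}_4$ bound. Redo the fourth-moment estimate with $f=f_\infty+(f-f_\infty)$ and handle the second part by Gronwall with the integrable rate $\|f-f_\infty\|_{V'}^2$. The paper's $\mathscr{M}(\tau)$, however, does not give this bound.
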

\begin{proof}
	The proof of this theorem is analogous to that of \cite[Theorem 15]{LYR-QTDS-2025}, and is omitted here for simplicity.
\end{proof}

\section*{Declarations}

\subsection*{Contributions}
The author contributed solely to this work.

\subsection*{Conflict of interest}
The authors have no conflicts to disclose.

\subsection*{Availability of date and materials}
Not applicable.

\newcounter{cankan}

\end{document}